\documentclass[11pt,a4paper,reqno]{amsart}
\usepackage[utf8]{inputenc}
\usepackage{amsmath,amssymb,amsthm, mathtools}
\usepackage{enumerate}
\usepackage{xfrac}
\usepackage[symbol,perpage]{footmisc}
\usepackage{hyperref}
\usepackage{enumitem}
\usepackage[british]{babel}
\usepackage{graphicx, subcaption, xcolor, bbm}
\usepackage[margin=1in]{geometry}

\usepackage{tikz-cd}
\theoremstyle{plain}
\newtheorem{thm}{Theorem}[section]
\newtheorem{prop}[thm]{Proposition}
\newtheorem{lemma}[thm]{Lemma}

\newtheorem*{claim*}{Claim}
\newtheorem*{thm*}{Theorem}

\theoremstyle{definition}
\newtheorem{defi}[thm]{Definition}
\newtheorem*{defi*}{Definition}

\newtheorem*{notation*}{Notation}

\usepackage{chngcntr}
\counterwithin{figure}{section}

\theoremstyle{remark}

\newtheorem{obs}[thm]{Remark}
\newtheorem*{obs*}{Remark}

\newtheorem{rema}[thm]{Remark}
\newtheorem*{rema*}{Remark}

\theoremstyle{plain}
\newcounter{main}

 \newtheorem{maintheorem}[main]{Theorem}

\def\DD{\mathbb{D}}

\renewcommand{\setminus}{\smallsetminus}
\newcommand{\anna}[1]{\textcolor{magenta}{#1}}
\newcommand{\luke}[1]{\textcolor{blue}{#1}}

\title{Boundaries of Baker domains of entire functions. A finer approach}

\author[Anna Jov\'e]{Anna Jov\'e}
\address{
	 Departament de Matemàtiques i Informàtica, Universitat de Barcelona, Barcelona, Spain}
\email{annajove@ub.edu}

\author{{\L}ukasz Pawelec}
\address{Institute of Mathematical Economics, SGH Warsaw School of Economics, 
al.~Niepodleg\l{}o\'{s}ci~162, 02-554 Warszawa, Poland}
\email{lpawel@sgh.waw.pl}

\thanks{The first author acknowledges financial support from the Spanish government grants PID2023-147252NB-I00 and  FPI PRE2021-097372. The research of the second author was funded in whole or in part by the National Science Centre, Poland, grant no. 2023/49/B/ST1/03015.}

\date{\today}

\begin{document}
\begin{abstract}
We consider transcendental entire functions having doubly parabolic Baker domains, such that the Denjoy-Wolff point of the associated inner function is not a singularity. We describe in a very precise way the dynamics on the boundary from a measure-theoretical point of view. 
Applications of such results lead to a better understanding of the topology and the dynamics on the boundaries. In particular, we improve some of the results in \cite{FagellaJove} for the Baker domain of $z+e^{-z}$.
In fact, our conclusions are obtained by applying new results established here on the dynamics of the radial extension of one component doubly parabolic inner functions, which strengthen those of \cite{ivrii2023innerfunctionscompositionoperators}.
\end{abstract}
\maketitle

\section{Introduction}
Let $ f\colon\mathbb{C}\to\mathbb{C} $ be a transcendental entire function   and denote by $ \left\lbrace f^n\right\rbrace _{n\in\mathbb{N}} $ its iterates, which generate a discrete dynamical system in $ \mathbb{C} $. Then, the complex plane is divided into two totally invariant sets: the {\em Fatou set} $ \mathcal{F}(f) $, defined to be the set of points $ z\in\mathbb{C} $ such that $ \left\lbrace f^n\right\rbrace _{n\in\mathbb{N}} $ forms a normal family in some neighbourhood of $ z$; and the {\em Julia set} $ \mathcal{J}(f) $, its complement. The connected components of the Fatou set are called {\em Fatou components}.

For entire functions, invariant Fatou  components are always simply connected \cite{baker1984}, so the Riemann map can be used as a uniformization. More precisely, let $ U $ be a invariant Fatou component of $ f $ and let $ \varphi $ be a Riemann map from the open unit disk $ \mathbb{D} $ onto $ U $. Then,  \[
g\colon\mathbb{D}\longrightarrow\mathbb{D},\hspace{1cm}g\coloneqq \varphi^{-1}\circ f\circ\varphi
\] is an analytic self-map of $ \mathbb{D} $. The function $ g $ is called the {\em associated inner function}. Since  $ f|_{U} $ and $ g|_{\mathbb{D}} $ are conformally conjugate by $ \varphi $, the study of holomorphic self-maps of $ \mathbb{D} $ is a good approach to analyse the dynamics of $ f|_{U} $. Moreover, we can extend the conjugacy by using radial limits for both $ g $ and $ \varphi $, i.e.
\[g^*\coloneqq\lim\limits_{t\to 1^-}g(t\xi), \hspace{1cm}\varphi^*\coloneqq\lim\limits_{t\to 1^-}\varphi(t\xi),\] for $ \xi\in\partial\mathbb{D} $. Then, $ \varphi^*\circ g^*(\xi)=f\circ \varphi^* (\xi)$ for $ \lambda $-almost every $ \xi\in \partial\mathbb{D} $ \cite[Lemma 3.11]{JF23}, where $ \lambda $ denotes the normalized Lebesgue measure on $ \partial\mathbb{D} $. 

Let us focus on {\em doubly parabolic Baker domains}, i.e. invariant Fatou components in which iterates converge locally uniformly to $ \infty $, and for which the hyperbolic distance between iterates converges to zero. Baker domains are exclusive to transcendental functions, and present richer dynamics (both in the interior and on the boundary), affected by the presence of the essential singularity. 

Moreover, if $U$ is a doubly parabolic Baker domain, there are infinitely many accesses from $U$
\cite{BakerDom} (see also \cite{BFJK-Accesses, JF23}), which form a natural partition of the boundary $ \partial U $, in the following way.
\begin{defi*}{\bf (Target)}
Let $ f\colon \mathbb{C}\to\mathbb{C} $ be a transcendental entire function,	let $U$ be a Baker domain, and let $\varphi\colon\mathbb{D}\to U$ be a Riemann map. Let $I\subset \partial \mathbb{D}$ be a closed arc, delimited by  $\xi_1\neq  \xi_2\in\partial \mathbb{D}$ such that $ \varphi^*(\xi_1)=\varphi^*(\xi_2)=\infty$. We say that $ \varphi^*(I) \subset\partial U$ is a {\em target} on $\partial U$.
\end{defi*}

See Figure \ref{fig-radial-limits} for a visual intuition.

	\begin{figure}[htb!]\centering
	\includegraphics[width=15cm]{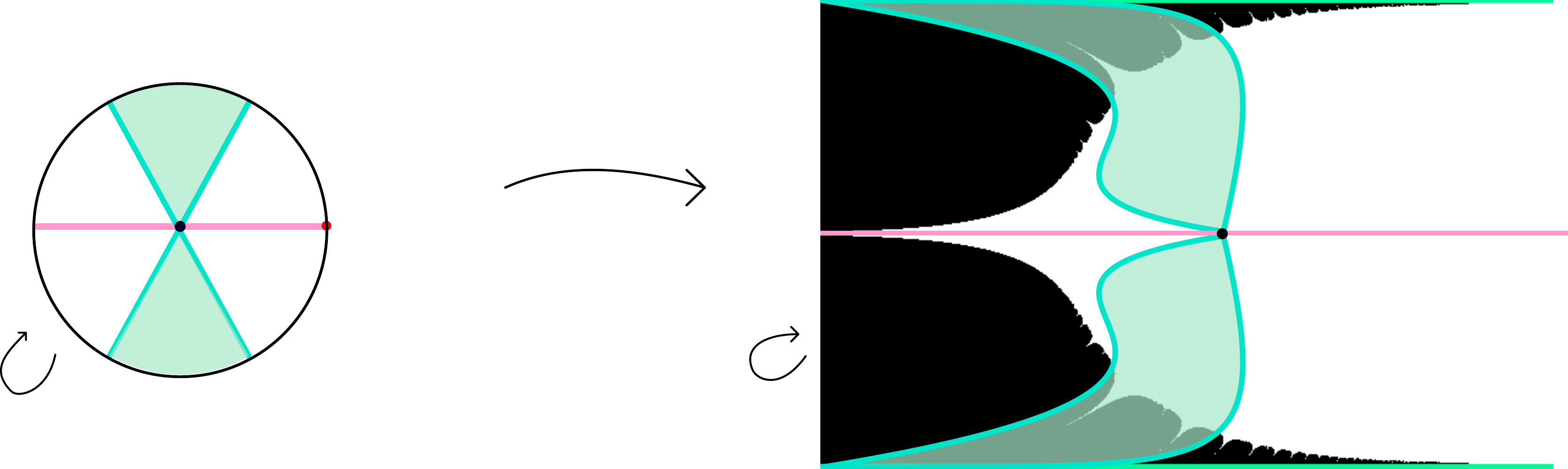}
	\setlength{\unitlength}{15cm}
	\put(-0.63, 0.205){$ \varphi $}
	\put(-1.02, 0.045){ $g $}
	\put(-0.54, 0.07){$f$}
		\put(-1, 0.2){ $\mathbb{D} $}
	\put(-0.05, 0.27){$U$}
	\caption{\footnotesize Schematic representation of targets. Indeed, for the Baker domain $U$ of $ f(z)=z+e^{-z} $, the figure shows the image of two arcs in the unit circle, each of them bounded by two rays landing at infinity in the dynamical plane.}
	\label{fig-radial-limits}
\end{figure}

Hence, given such an invariant Fatou component, it is a natural to examine the distribution of orbits on $\partial U$ with respect to a given target.

A first step on this direction are the results of Doering and Mañé \cite{DM91} (and also \cite[Thms. B and C]{BFJK-Escaping}), which give a description of $ f|_{\partial U} $ from a measure-theoretical point of view. More precisely, if $U$ is a doubly parabolic Baker domain, $ f|_{\partial U} $ is always ergodic with respect to any harmonic measure $\omega_U$. In the case when $ f|_{\partial U} $ has finite degree or the Denjoy-Wolff point of the associated inner function is not a singularity, then $ f|_{\partial U} $ is also conservative, and thus $ \omega_U $-almost every orbit is dense on $ \partial U $. 

In contrast with attracting basins, no harmonic measure supported on $\partial U$ is $ f $-invariant, so one cannot do a straightforward analysis applying classical results from ergodic theory, such as Birkhoff Ergodic Theorem or Kac's Lemma (compare with the results given in Appendix \ref{appendix}). 

Nevertheless, there exists an infinite $\sigma$-finite measure $ \mu $ which is $ f $-invariant and absolutely continous with respect to $\omega_U$ \cite{DM91}. This measure $ \mu $ allows us to prove more precise statements on the ergodic properties of $ f|_{\partial U} $ and the distribution of orbits with respect to targets, although the complexity of the arguments is much greater than in the case where an invariant probability measure exists.

\subsection*{Statement of results}
For a Baker domain, the convergence point of the interior orbits is placed on the boundary. In  terms of the associated inner function, this means that the {\em Denjoy-Wolff point} (the point of convergence of the interior orbits) is on the unit circle (and can be assumed to be 1, by precomposing the Riemann map $\varphi\colon\mathbb{D}\to U$ with an appropriate rotation).  Denote by $\omega_U$ the harmonic measure on $\partial U$ obtained as the push-forward of the Lebesgue measure $\lambda$ on $\partial\mathbb{D}$ under $\varphi^*$.

The $\sigma$-finite measure $ \mu $ on $\partial U$ which is invariant under $ f $ is defined as follows.
Denote by $\psi\colon\mathbb{H}\to U$ a conformal map from the upper-half plane to $U$, such that points in $\mathbb{H}$ converge to $\infty$ when iterating $h\coloneqq \psi^{-1}\circ f\circ\psi$. Radial limits are defined for $h$ and $\psi$ (denoted by $h^*$ and $\psi^*$, respectively).
Let $\mu$ the push-forward by $\psi^*$ of the Lebesgue measure  on $\mathbb{R}$, which is  $T$-invariant.  Thus, $\mu$ is $f$-invariant.

We make the following distinction between targets.
\begin{defi*}{\bf (Finite target and infinite target)}
		Let $U$ be a doubly parabolic Baker domain, and let $\psi\colon\mathbb{H}\to U$ be as above. Let $I\subset \mathbb{R}\cup \left\lbrace \infty\right\rbrace $ be a closed  interval, delimited by  $x_1\neq  x_2\in \mathbb{R}$ such that $ \psi^*(x_1)=\psi^*(x_2)=\infty$. 
		\begin{itemize}
			\item If $ I$ is bounded on $\mathbb{R}$, we say that $ \psi^*(I) \subset\partial U$ is a {\em finite target} on $\partial U$. 
			\item  If the complement of  $I$ is bounded on $\mathbb{R}$, we say that $ \psi^*(I) \subset\partial U$ is a {\em infinite target} on $\partial U$.
		\end{itemize}
\end{defi*}
See Figure \ref{fig-radial-limits2} for a visual intuition.
We prove the following.

\begin{maintheorem}\label{thm-a}
	Let $f$ be a transcendental entire function, and let $ U $ be a doubly parabolic Baker domain  such that the Denjoy-Wolff point of the associated inner function is not a singularity. 
\begin{enumerate}[label={\em (\alph*)}]
		\item  {\em (Occupation times)}\label{thmA.a}  Let $ E\subset\partial U $ be a finite target, and let\[ S_nE(x)= \# \left\lbrace 0\leq k\leq n-1: f^k(x)\in E \right\rbrace =\sum_{k=0}^{n} \mathbbm{ 1 }\circ f^k(x).\]   Then, for all $\varepsilon\in (0, \frac12)$,  \[\lim\limits_n \frac{S_nE(x)}{n^{\frac12-\varepsilon}}\to\infty, \] for $ \omega_U $-almost every $ x\in \partial U $. In particular, $ {S_n E}/{\sqrt[3]{n}} \to \infty$ $ \omega_U $-almost everywhere.
			\item {\em (First return times)}\label{thmA.b}  Let $ E\subset\partial U $ be a finite target, and let \[E_n\coloneqq \left\lbrace x\in E\colon \tau_E(x)=n\right\rbrace, \] where $ \tau_E(x) \in \mathbb{N}$ denotes the first return time of $ x\in E $ to $ E $. Then, $ \omega_U(E_n)\sim  \frac{1}{n\sqrt{n}} $.
	\item {\em (Waiting times)}\label{thmA.c} Let $ F\subset\partial U $ be an infinite target, $F=\psi^*(I)$, {and assume the complement of $I$ in $\mathbb{R}$ is large enough}. Let \[F_n\coloneqq \left\lbrace x\in F\colon f(x), \dots, f^{n-1}(x)\in F, f^{n}(x)\notin F\right\rbrace. \] Then, $ \omega_U(F_n)\sim \frac{1}{\sqrt{n}} $.
\end{enumerate}
\end{maintheorem}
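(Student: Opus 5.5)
The plan is to transfer everything to the boundary map $T\coloneqq h^*\colon\mathbb{R}\to\mathbb{R}$, which is defined Lebesgue-almost everywhere and is the radial extension of the associated inner function written in the upper half-plane. Since $\varphi^*\circ g^*=f\circ\varphi^*$ holds $\lambda$-almost everywhere, and $\omega_U$ is the push-forward of $\lambda$, all three statements become statements about $T$ on $\mathbb{R}$. Targets pull back to intervals: a finite target becomes a bounded interval $I$, and an infinite target becomes the complement of a bounded interval. On $\mathbb{R}$ we may use Lebesgue measure $m$ in place of $\lambda$, because the Cauchy density $\frac{1}{\pi(1+x^2)}$ is bounded above and below on compact sets. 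Rescaling constants of this kind are absorbed in $\sim$, which I read as comparability up to constants.

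The structural input is the hypothesis that the Denjoy–Wolff point is not a singularity. In the half-plane model, this gives $h(w)=w+c+\frac{a}{w}+\dots$ near $\infty$ for some $c\in\mathbb{R}$. It then follows that $T$ preserves $m$, is conservative and ergodic (Doering–Mañé), and, most importantly, that the process $x_n=T^n(x)$ behaves like a random walk with finite variance. More precisely, $T(x)-x-c$ has a law, with respect to normalized Cauchy measure, whose tails decay fast enough, and double parabolicity forces $c=0$.

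The key technical step, and the main obstacle, is a local limit theorem for $T$. I would aim to prove that for bounded intervals $I,J$ one has
\[ m\big(I\cap T^{-n}J\big)\sim \frac{C\, m(I)\,m(J)}{\sqrt{n}}. \]
I would try to obtain this through the transfer operator of $T$ acting on Poisson kernels. Pulling back a Poisson integral by an inner function again gives a Poisson integral, so $\mathcal{L}^n$ applied to the Cauchy density at height $y$ is the Cauchy density centred at $h^n(i y)$. The expansion $h^n(w)\approx w+\dots$, together with the estimate $\operatorname{Im}h^n(i)\asymp\sqrt{n}$ known for one-component doubly parabolic maps (strengthening \cite{ivrii2023innerfunctionscompositionoperators}), should give the asymptotics. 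In this picture, $\sqrt{n}$ plays the role of the spread of the walk. From the local limit theorem, $T$ is pointwise dual ergodic with return sequence $a_n\asymp\sum_{k\le n}k^{-1/2}\asymp\sqrt{n}$.

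The three parts then follow from this.

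For (a), I would apply Aaronson's theory. For pointwise dual ergodic maps with regularly varying $a_n$ of index $\frac12$, the occupation time satisfies $S_nE/a_n\to Y$ in distribution, where $Y$ is Mittag-Leffler of order $\frac12$ and hence strictly positive. Convergence in distribution is not enough for the almost sure statement, so I would use the almost sure lower bound: for a Darling–Kac set, $\liminf S_nE/(a_n/\log\log a_n)>0$ almost surely, by Aaronson's lower law of the iterated logarithm. This yields $S_nE/n^{1/2-\varepsilon}\to\infty$ almost everywhere. To handle an arbitrary bounded $I$, I would show that $I$ is a Darling–Kac set using the uniform version of the local limit theorem above.

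For (b), the standard renewal identity links the return-time distribution to the correlations $u_n=m(I\cap T^{-n}I)/m(I)\asymp n^{-1/2}$. Writing $\sum_n u_n s^n=\big(1-\sum_n r_n s^n\big)^{-1}$, with $r_n=m(E_n)/m(I)$, the Karamata–Tauberian theorem gives $\sum_{k>n}r_k\asymp n^{-1/2}$. Getting $r_n\asymp n^{-3/2}$ itself needs a monotonicity or local input. For that, I would instead compute $m(E_n)$ directly through the Poisson kernel representation of the first passage, using that excursions are mapped by the inner function with explicit harmonic measure.

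For (c), the set $F_n$ consists of points starting in $F$ whose excursion away from the bounded complement $J=\mathbb{R}\setminus I$ has length exactly $n$. By invariance of $m$, $m(F_n)=m(\{x\in J: \tau_J(x)>n\})$ up to a shift by one. Applying the tail estimate from (b) to $J$ gives $\asymp n^{-1/2}$; here the hypothesis that $J$ is large enough would be used to guarantee that $J$ is a Darling–Kac set.

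Finally, I would transfer the statements back to $\omega_U$ via the density comparison from the first paragraph.
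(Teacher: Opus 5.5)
Your part (c) has a genuine gap: the final transfer to $\omega_U$ fails. On $\mathbb{R}$, $\omega_U$ is the Cauchy measure $\frac{dx}{\pi(1+x^2)}$. It is comparable to Lebesgue measure $m$ only on a \emph{fixed} compact set. But $F$ is a neighbourhood of $\infty$, and long excursions in $F$ happen at distance $\asymp\sqrt n$ from the origin (near the points $p_n^\pm$ of Section~\ref{subsection-dp-around-infty}), where the Cauchy density is $\asymp 1/n$. Your identity $m(F_n)=m(J\cap\{\tau_J>n\})\asymp n^{-1/2}$ is correct for Lebesgue measure and for excursions of length exactly $n$. However, it converts into $\omega_U(F_n)\asymp n^{-3/2}$, not $n^{-1/2}$. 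You can check this on Boole's map with $J=[-1,1]$: the right-hand exactly-$n$ set is $(p_{n+1}^+,p_{n+2}^+]$, of length $\asymp n^{-1/2}$, located at $\asymp\sqrt n$.

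The $n^{-1/2}$ in the paper comes from a direct harmonic-measure computation for a different set. With $E=[p_1^-,p_1^+]$, it is the set of points of $F$ that stay in $F$ for $n$ iterates, namely $(-\infty,p_n^-]\cup[p_n^+,+\infty)$. Since $|p_n^\pm|\asymp\sqrt n$, its $\omega_U$-mass is $\asymp\int_{\sqrt n}^{\infty}\frac{dx}{1+x^2}\asymp n^{-1/2}$. That set has infinite Lebesgue measure. So no argument that goes through the invariant measure $m$ and then transfers by a density bound can give this estimate. You have to compute with $\omega_U$ directly, using the parabolic asymptotics $p_n\asymp\sqrt n$.

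For (a) and (b), your plan hinges on a local limit theorem that is only sketched, and the sketch does not deliver it. The identity $\mathcal{L}^nP_w=P_{h^n(w)}$ controls Poisson-kernel densities. For $\mathbbm{1}_I$ it therefore gives only upper bounds (via $\mathbbm{1}_I\le C_IP_{w_I}$), with no matching lower bounds and no exact asymptotics of the kind Karamata's Tauberian theorem needs. The pointwise estimate $r_n\asymp n^{-3/2}$ is left open. The finite-variance random-walk heuristic is also misleading: near a pole $a$ one has $T(x)-x\approx -b/(x-a)$, which is a Cauchy-type tail.

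The paper needs none of this machinery.
\begin{itemize}
\item It takes $E=[p_1^-,p_1^+]$ and applies Lemma~\ref{lemma-zweimuller}, $\mu(E\cap\{\tau_E>n\})=\mu(E^c\cap\{\tau_E=n\})$.
\item The right-hand side is read off from the fundamental intervals $J_n^\pm=[p_n^\pm,p_{n+1}^\pm]$, each of length $\asymp n^{-1/2}$. Differencing these tails gives (b).
\item Part (a) then follows from Theorem~\ref{thm-aaronson} with $a(n)=n^{1/2-\varepsilon}$. It only needs $\int_E a(\tau_E)\,d\mu<\infty$, which already follows from the tail bound by summation by parts.
\item The conclusion of that theorem holds for every $f\in L^1_+(\mu)$, so all finite targets are covered at once. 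No Darling--Kac property and no lower law of the iterated logarithm is required.
\end{itemize}
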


\begin{figure}[h!]\centering
	\includegraphics[width=15cm]{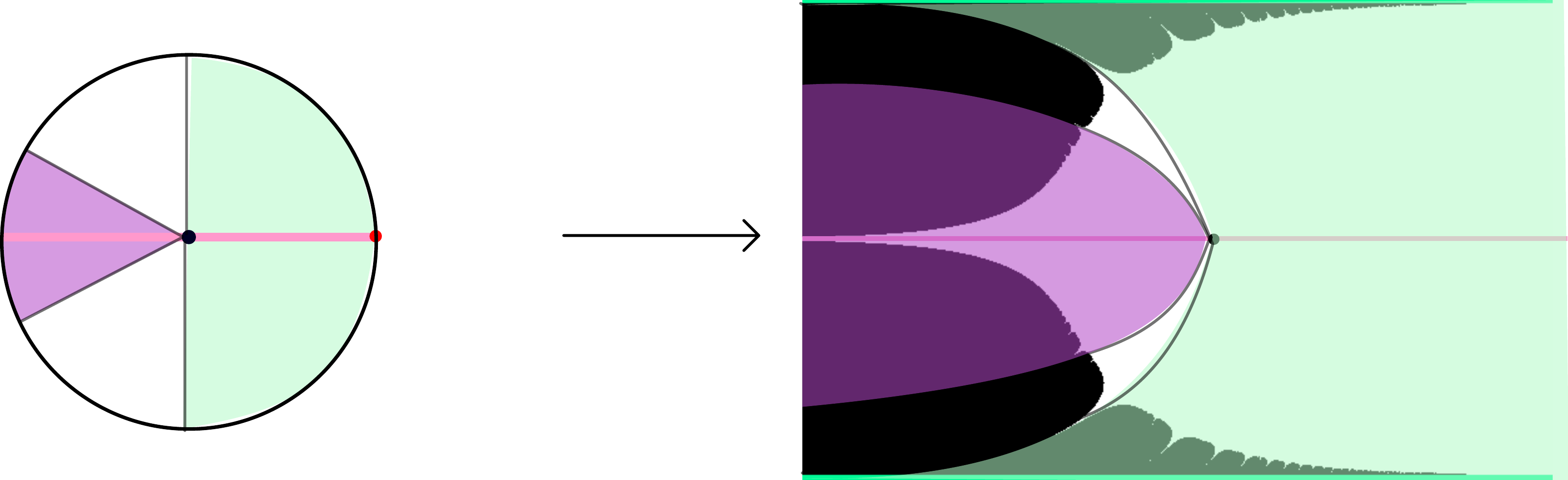}
	\setlength{\unitlength}{15cm}
	\put(-0.6, 0.17){$ \varphi $}
		\put(-0.75, 0.15){$ 1 $}
			\put(-1, 0.25){$ \mathbb{D} $}
		\put(-0.02, 0.27){$ U $}
	\caption{\footnotesize For the same Baker domain as above, a finite target (in purple) and an infinite target (in green). Note that 1 is the Denjoy-Wolff point of the associated inner function.}
	\label{fig-radial-limits2}
\end{figure}

Inspired by results of Thaler and Zweimüller \cite{ThalerZweimuller} on distributional limit theorems in infinite ergodic theory, we prove the following finer results for Baker domains of finite degree (in fact, the assumption of finite degree can be weakened -- see Section \ref{subsect-proofAB}). 
\begin{maintheorem}\label{thm-b}
Let $f$ be a transcendental entire function, and let $ U $ be a doubly parabolic Baker domain  of finite degree. 
	\begin{enumerate}[label={\em (\alph*)}]
		\item\label{thmb-a} {\em (Darling-Kac theorem)} Let $E\subset \partial U$ be a finite target on $\partial U$, and let $$ S_nE(x)= \# \left\lbrace 0\leq k\leq n-1: f^k(x)\in E \right\rbrace .$$ Then, as $n\to\infty$,
		\[\omega_U\left(\left\lbrace \frac\pi{\sqrt {2n}} S_nE\leq \mu(E) t \right\rbrace  \right) \to \frac2\pi \int_0^t e^{-\frac{y^2}\pi }dy, \hspace{0.5cm} t\geq 0.\]
		\item\label{thmb-b} {\em (Arcsine law for occupation times)} 
		Let $ A\coloneqq \psi^*((0, +\infty)) $, and let $$ S_nA(x)= \# \left\lbrace 0\leq k\leq n-1: f^k(x)\in A \right\rbrace .$$	Then, as $n\to\infty$,
		\[\omega_U\left(\left\lbrace \frac1n S_nA\leq t \right\rbrace  \right) \to \frac2\pi \arcsin\sqrt t, \hspace{0.5cm} t\in [0,1].\]
			\item\label{thmb-c} {\em (Arcsine law for waiting times)}Let $E\subset \partial U$ be a finite target on $\partial U$, and let $Z_nE(x)$ be the time of the last visit of the orbit of $x$ to $E$ up to time $n$.
			Then, as $n\to\infty$,
			\[\omega_U\left(\left\lbrace \frac1n Z_nE\leq t \right\rbrace  \right) \to \frac2\pi \arcsin\sqrt t, \hspace{0.5cm} t\in [0,1].\]
	\end{enumerate}
\end{maintheorem}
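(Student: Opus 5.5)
We transfer everything to the real line through the conjugacy given by $\psi$. Let $T\coloneqq h^*\colon\mathbb{R}\to\mathbb{R}$ be the radial extension of $h=\psi^{-1}\circ f\circ\psi$. Since $\psi^*\circ T=f\circ\psi^*$ holds Lebesgue-a.e., $\omega_U$ is the push-forward of a probability measure equivalent to Lebesgue measure on $\mathbb{R}$ (Cauchy measure), and $\mu$ is the push-forward of Lebesgue measure $\mathrm{Leb}$, each statement in Theorem \ref{thm-b} becomes a distributional statement for $T$. The measure $\mathrm{Leb}$ is $T$-invariant, and $T$ is conservative and ergodic (Doering--Mañé). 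A finite target $E$ corresponds to a bounded interval $I$, and $A$ corresponds to $(0,+\infty)$. Finite degree means that $h$ extends to a finite Blaschke-type map fixing $\infty$ as a doubly parabolic point. So $T$ is a real-analytic, finite-to-one, Lebesgue-preserving map of $\mathbb{R}$ of the form
\[
T(x)=x+b+\sum_j \frac{p_j}{x_j-x}\qquad(b\in\mathbb{R},\ p_j>0).
\]
After conjugating by a translation, and since the boundary map is parabolic at $\infty$, we may assume $b=0$. This is a Boole-type map.

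The strategy is the Aaronson--Thaler--Zweimüller machinery. We will show that $T$ is pointwise dual ergodic with return sequence $a_n(T)\sim \frac{\sqrt{2n}}{\pi}\cdot c$, i.e.\ regularly varying of index $\frac12$, where $c$ is fixed by the asymptotic variance $\sum_j p_j$ at infinity. We also need the existence of uniform sets, specifically that every bounded interval $I$ is a Darling--Kac set. The key tool is the structure of $T$ near $\infty$. Let $Y=[-R,R]$ with $R$ large. On $\mathbb{R}\setminus Y$ the map behaves like $x\mapsto x-P/x+O(x^{-2})$ on each side, with $P=\sum p_j$. Then the induced map $T_Y$ (first return) is a Gibbs--Markov (uniformly expanding with bounded distortion) map, because $T$ is expanding on bounded sets away from the poles and the excursions to $\pm\infty$ are controlled by the parabolic asymptotics. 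We then estimate the wandering rate: the tail $\mathrm{Leb}(Y\cap\{\varphi_Y>n\})$ behaves like $C n^{-1/2}$, by explicit computation with the orbit of $x\mapsto x-P/x$, which escapes like $\sqrt{2Pn}$. This reproduces the constant $\frac{\pi}{\sqrt{2n}}$ after normalisation.

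With these in hand, the three items follow from known abstract theorems applied to $T$ and then pushed forward. Part \ref{thmb-a} is the Aaronson Darling--Kac theorem. It says $S_nI/a_n\to\mu(I)\,\mathcal{M}_{1/2}$ in strong distributional sense, so convergence holds for any absolutely continuous probability, in particular the Cauchy measure. The Mittag-Leffler law of index $\frac12$ is the half-normal, which gives the stated density $\frac{2}{\pi}e^{-y^2/\pi}$. Part \ref{thmb-c} is the Dynkin--Lamperti arcsine law for last-visit times for pointwise dual ergodic maps with regularly varying return sequence of index $\frac12$. Part \ref{thmb-b} is the Thaler--Zweimüller arcsine law for occupation times. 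Its hypotheses are a Darling--Kac set $Y$ with $\mathbb{R}\setminus Y$ split into two invariant-ish "rays" $(R,\infty)$ and $(-\infty,-R)$, and asymptotic balance of the wandering rates into each half. Balance with parameter $\frac12$ holds because near $\infty$ both sides have the same $-P/x$ behaviour, so the two tails have equal constants. The generalised arcsine law with $\alpha=\beta=\frac12$ is exactly $\frac{2}{\pi}\arcsin\sqrt t$. Replacing $\mathbb{R}\setminus Y$ halves by $(0,\infty)$ changes occupation times by at most $S_nY=o(n)$, so the limit is unchanged.

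The main obstacle is verifying the hypotheses rigorously: the Gibbs--Markov/uniform-set property of a bounded interval for $T$, and the precise tail asymptotics $\mathrm{Leb}(\varphi_Y>n)\sim Cn^{-1/2}$ with equal constants on both sides. I would first handle the latter by comparing the escape region with the explicit model $x\mapsto x-P/x$, which has an explicit invariant-coordinate $x^2$ growing linearly, and then control error terms $O(x^{-2})$ by a summable perturbation argument. For uniformity, I would use the adler-type condition $T''/(T')^2$ bounded on branches, which holds for finite-degree rational-type $T$.
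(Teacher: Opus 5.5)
Your plan is correct in outline, but it takes a different route from the paper. The paper does no induced-map or wandering-rate analysis for Theorem \ref{thm-b}. Instead it proves in Lemma \ref{lem:innerAFN} that the conjugate $S=\varphi^{-1}\circ T\circ\varphi$, with $\varphi(x)=p\tan x$, is a basic AFN-system on $(-\pi/2,\pi/2)$:
\begin{itemize}
\item Adler's condition and the finite image condition come from the one component characterisation, Theorem \ref{thm-ivriiurbanski-onecomp};
\item the neutral points are $\pm\pi/2$, with $p_l=2$;
\item expansion away from them is obtained by taking $p$ large.
\end{itemize}
It then quotes Thaler--Zweimüller's Corollaries 8.1--8.3 with $\alpha=\beta=\frac12$ (Theorem \ref{thm-b-in}), and transfers the result to $\partial U$ by Proposition \ref{prop-targets}.

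You stay on $\mathbb{R}$ and verify by hand the hypotheses of the abstract theorems (Aaronson's Darling--Kac theorem, the Dynkin--Lamperti law, the Thaler--Zweimüller arcsine law). Concretely, you use an expanding, Adler, finite-image induced map on $Y=[-R,R]$, the tail $\mathrm{Leb}(Y\cap\{\varphi_Y>n\})\sim Cn^{-1/2}$ from the escape rate $\sqrt{2Pn}$, dynamical separation of $(R,\infty)$ and $(-\infty,-R)$, and balance $\beta=\frac12$.

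In effect you re-derive, for generalized Boole maps, what the AFN theory supplies for free. This includes the step you flag as the main obstacle: that the specific interval $I$, and not just one reference set $Y$, is a Darling--Kac set, which part (c) needs. The distortion input for your induced map is essentially the Li--Schweiger estimate, Theorem \ref{lemma-LiSchweiger}.

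What each route buys:
\begin{itemize}
\item The paper's route is shorter, and it also covers one component inner functions of infinite degree.
\item Your route keeps track of constants. It gives $a_n\sim\sqrt{2n}/(\pi\sqrt P)$ with $P=\sum_j p_j$. Since $\mu(E)$ depends on how $\psi$ is chosen, this shows that the normalisation $\pi/\sqrt{2n}$ in (a) is only right when $\psi$ is chosen with $P=1$. That is invisible in Remark \ref{cor:11}, where $w_n(E)\sim\sqrt n$ is recorded only up to constants.
\end{itemize}

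Two small corrections.

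First, the vanishing of $b$ does not come from conjugating by a translation, since a translation leaves $b$ unchanged. Nor does it follow from parabolicity alone: $b\neq 0$ gives a simply parabolic point with a single petal. It is forced by the doubly parabolic hypothesis, exactly as in Lemma \ref{lemma-finiteBP}.

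Second, the reduction to $T$ needs more than the a.e.\ semiconjugacy $\psi^*\circ T=f\circ\psi^*$. You must also know that $\{\eta\colon \psi^*(\eta)\in E\}=I$ up to a null set; this is the separation property of targets, Proposition \ref{prop-targets}. The set $A=\psi^*((0,+\infty))$ is not a target, so there you should use instead that $\psi^*$ is injective on the full-measure set where it is finite. This follows from disjointness of cluster sets, Theorem \ref{thm-all-cluster-sets-contain-infty}.
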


\subsection*{Doubly parabolic inner functions}
	The  main theorems stated above are in fact obtained by first proving more general results for the radial extension of the associated inner functions, and then transfering them to the Baker domains, exploiting the compatibility between the targets and the Riemann map. 
	
Inner functions associated to doubly parabolic Baker domains are themselves doubly parabolic (Def. \ref{def:dblpar}). For these inner functions we develop a very precise measure-theoretic description of their boundary dynamics, including the following.
\begin{itemize}
	\item We give an asymptotic description of occupation times, first return times and waiting times (Thm. \ref{thm-a-in}), in the same spirit as in Theorem \ref{thm-a}.
	\item We consider a specific class of inner functions, namely {\em one component doubly parabolic inner functions} (Def. \ref{def-ivriiurbanski-onecomp}). We prove that the boundary of such inner functions fits into the framework of AFN-systems (Lemma \ref{lem:innerAFN}), which form a class of interval maps that preserve infinite measures, widely studied from the point of view of infinite ergodic theory. Thus, we will be able  to apply results on AFN-systems concerning the asymptotic distribution of orbits, thereby obtaining  finer recurrence properties of one-component inner functions (Thm. \ref{thm-b-in}) that translates into Theorem \ref{thm-b} above. 
	\item We prove the distribution of periodic points according to their period (Prop. \ref{prop-periodic-general}). More precisely, for $ \xi\in \partial \mathbb{D} $ and $r>0$, there exist a periodic point $ p_r\in D(\xi,r)$. The period of $p_r$ increases asymptotically at most as fast as 
	\[ \frac{-\ln r}{r^2}, \mbox{\quad as $ r\to 0$.}\]
 (Compare also with Theorem \ref{thm-dt} below.)
\end{itemize}

	The previous results for inner functions, presented in Sections \ref{sect-dpinnerfunctions} and \ref{section-ergodic-DP}, are of independent interest. 

\subsection*{Applications}
Even though the previous results are interesting on their own, we apply them to a  specific class of Baker domains to deduce some additional properties, which are interesting from a topological and dynamical point of view, and not necessarily related with ergodic theory.

Let us start by defining this specific class of Baker domains, which we call {\em expanding}.
\begin{defi*}{\bf (Expanding Baker domains)}
	Let $ f $ be a transcendental entire function, and $ U $ be a Baker domain. We say that $ U $ is an {\em expanding Baker domain}  if there exists $r>0$ such that the following properties are satisfied.
	\begin{enumerate}[label={(\alph*)}]
		\item $ U $ is doubly parabolic Baker domain, and the Denjoy-Wolff point of the associated inner function is not a singularity.
		\item  For every $x, y\in\partial U $ such that $ f(y)=x$ there exists a branch $ F_y$ of $ f^{-1} $ which is well-defined in $ D(x,r) $, and
	$F_y(D(x,r)\cap U)\subset U$.
	\item $ \left| f'(z)\right| >1 $, $ z\in D(x,r) $, for all $ x\in\partial U $; and for every finite target $ E $, $ \left| f'(z)\right| \geq K_E>1$, $ z\in D(x,r) $, for all $ x\in E$.
	\end{enumerate}
\end{defi*}

Although this seems a very restrictive condition, we provide a family of Baker domains of any finite degree satisfying the previous properties, inspired by  \cite{FH06}. In particular, the Baker domains of degree two of the function $ f(z) =z+e^{-z}$, previously studied in \cite{BakerDom, FH06, FagellaJove}, are expanding.

Given an expanding Baker domain, and a backward orbit $ \left\lbrace x_n\right\rbrace _n \subset\partial U$ (i.e. so that $ f(x_{n-1})=x_n $), the branch $F_n$ of $f^{-n}$ sending $x_0$ to $x_n$ is well-defined in $D(x_0,r)$, for all $n\geq 0$ (Lemma \ref{lemma-baker-inverse-branch}).  Applying Theorem \ref{thm-a}, we find precise bounds on the contraction for $\widetilde{\omega_U}$-almost every backward orbit (where $\widetilde{\omega_U}$ is the measure on the space of backwards orbits given by Rohklin's natural extension \cite{Rohlin}; see also \cite{jov2024pesintheorytranscendentalmaps}).

\begin{maintheorem}{\bf (Uniform subexponential contraction of inverse branches)}\label{thm-c}
	Let $ f $ be a transcendental entire function, and $ U $ be an expanding Baker domain. Then,  for $\widetilde{\omega_U}$-almost every backward orbit $ \left\lbrace x_n\right\rbrace _n $ and $r_0\in (0,r)$,  there exists $n_0\in\mathbb{N}$ and  $ K>1 $ such that, for all $n\geq n_0$,
	\[F_n(D(x_0, r_0))\subset D(x_n,  K^{-\sqrt[3]{n}} r_0),\] where $F_n$ is the branch of $f^{-n}$ sending $x_0$ to $x_n$.
\end{maintheorem}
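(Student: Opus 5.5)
The plan is to combine the expansion hypothesis (c) with the occupation-time estimate of Theorem~\ref{thm-a}\ref{thmA.a}, applied along backward orbits via the natural extension. Fix a finite target $E\subset\partial U$ and let $K_E>1$ be the constant from (c). By Lemma~\ref{lemma-baker-inverse-branch}, every branch $F_n$ is defined on $D(x_0,r)$. Writing $F_n = F_{x_n}\circ\cdots\circ F_{x_1}$ (each $F_{x_k}$ the branch of $f^{-1}$ defined on $D(x_{k-1},r)$ sending $x_{k-1}$ to $x_k$), we want to show by induction that
\[
F_k(D(x_0,r_0))\subset D\bigl(x_k,\ r_0\,K_E^{-\#\{1\le j\le k:\ x_j\in E\}}\bigr).
\]
Since $|f'|>1$ on $D(x_j,r)$ for all $x_j\in\partial U$, the inverse branch $F_{x_j}$ contracts: $|F_{x_j}'|<1$ on $D(x_{j-1},r)$. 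Moreover, $|F_{x_j}'|\le K_E^{-1}$ whenever $x_j\in E$, because $F_{x_j}$ maps into a neighbourhood of $x_j$ where $|f'|\ge K_E$. To make the induction legitimate, the image $F_{x_j}(D(x_{j-1},\rho))$ must stay inside $D(x_j,r)$, so that derivative bounds on the disk around $x_j$ apply. I would handle this by integrating along radial segments: the segment from $x_{j-1}$ of length $\rho\le r_0<r$ maps to a curve from $x_j$ of length $<\rho$, and a continuity (first-exit) argument keeps it inside $D(x_j,\rho)$. This step also needs care with convexity, since the contraction constant on $D(x_j,r)$ applies only near points where the derivative bound holds. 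If needed, I would shrink $r$ slightly so that all the relevant disks sit in the region where (c) holds.

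Next, I translate the count of backward visits into a forward occupation time. Under Rohlin's natural extension, the map $\{x_k\}\mapsto x_n$ pushes $\widetilde{\omega_U}$ forward to $\omega_U$. Because $\omega_U$ is not invariant, however, the statement ``$\#\{1\le j\le n: x_j\in E\}$ is large'' is not simply the forward statement at $x_n$ (the sum is indexed backwards). I would therefore work with the invariant $\sigma$-finite measure $\mu\sim\omega_U$ and its natural extension $\tilde\mu$, on which the shift $\sigma^{-1}$ (forward to backward) is measure-preserving. Then
\[
\#\{1\le j\le n: x_j\in E\} = S_n E(x_n)\quad\text{(up to one term)}.
\]
The key claim is that for $\tilde\mu$-a.e. backward orbit, $S_nE(x_n)/n^{1/3}\to\infty$. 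Theorem~\ref{thm-a}\ref{thmA.a} gives this for a.e. \emph{fixed} starting point, but here the starting point $x_n$ varies with $n$.

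This is the main obstacle, and my first approach is to use the inverse (backward) system $(\tilde\sigma^{-1},\tilde\mu)$. It is again conservative and ergodic with the same return-sequence asymptotics, because first return times of $E$ and their distributions are symmetric under time reversal for a measure-preserving invertible system. Theorem~\ref{thm-a}\ref{thmA.b} states $\omega_U(E_n)\sim n^{-3/2}$, so the wandering rate of $E$ is of order $\sqrt n$ and is the same for the inverted system. I would then rerun the proof of Theorem~\ref{thm-a}\ref{thmA.a} (Aaronson-type almost-sure lower bounds from the return-sequence growth $a_n\asymp\sqrt n$) for the inverted system. This yields $\sum_{j=1}^n \mathbbm{1}_E(x_j)\ge n^{1/3}$ for $n\ge n_0$, almost everywhere for $\widetilde{\omega_U}$, since $\widetilde{\omega_U}\sim\tilde\mu$.

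Finally, combining the two parts gives
\[
F_n(D(x_0,r_0))\subset D\bigl(x_n,K_E^{-\sqrt[3]{n}}r_0\bigr)\quad\text{for all } n\ge n_0,
\]
so we may take $K=K_E$. Intersecting over countably many $r_0$ is unnecessary, since the bound comes from derivative estimates that are uniform in $r_0<r$.
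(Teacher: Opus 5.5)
Your proposal is correct and follows the paper's argument: each backward step is non-expanding, each backward visit to a finite target $E$ contracts by $K_E$, and the number of such visits up to time $n$ eventually exceeds $n^{1/3}$, by Theorem~\ref{thm-a}\ref{thmA.a} and ultimately Theorem~\ref{thm-aaronson}. You go beyond the paper in justifying steps it only asserts. First, you transfer the count from forward orbits to backward orbits by applying Theorem~\ref{thm-aaronson} to the inverse map on the natural extension of $\mu$; the integrability hypothesis is inherited because $\tilde f^{\,n}$ maps $\tilde E\cap\{\tau^{+}=n\}$ measure-preservingly onto $\tilde E\cap\{\tau^{-}=n\}$. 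Second, your first-exit argument shows that $F_{x_j}(D(x_{j-1},\rho))$ stays inside $D(x_j,\rho)$, so the derivative bounds can actually be applied.
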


It is well-known that periodic points are dense on the boundary of expanding Baker domains (compare with \cite{JF23, jove2024periodicboundarypointssimply}). However, standard proofs showing density of boundary periodic points \cite{Przytycki-Zdunik,FagellaJove, JF23, jove2024periodicboundarypointssimply, jov2024pesintheorytranscendentalmaps, jove2024boundarieshyperbolicsimplyparabolic} do not give control in the period of the periodic point. 

Combining the  techniques in  \cite{JF23, jove2024periodicboundarypointssimply} with our results on the inner functions (notably Proposition \ref{prop-periodic-general}), we prove the following result, which gives a finer description of the distribution of periodic points on the boundary of expanding Baker domains in terms of their period.

%\luke{I need to think how to phrase this better.}
%\begin{maintheorem}{\bf (Bound on the period of periodic boundary points)}\label{thm-d}
%Let $ f $ be an entire function and $ U $ be an expanding Baker domain of finite degree. Let $x\in\partial U$ and $ r>0 $. Then, for all $ r>0 $, there exist a periodic point $ p_r\in D(x,r) \cap \partial U$. The period of $ p_r $ increases asymptotically as 
%\[ \frac{-\ln \omega_U(D(x,r))}{\omega_U(D(x,r))^2},\] as $ r\to 0 $.
%\end{maintheorem}

%\anna{VERSION OF THE THEOREM WITH TARGETS}
%\begin{maintheorem}{\bf (Bound on the period of periodic boundary points)}\label{thm-dta}
%Let $ f $ be an entire function, $ U $ be an expanding Baker domain and let $ \varphi\colon\mathbb{H}\to U $ be a conformal map sending infinity to infinity. Let $z\in\partial U$, and let $ \left\lbrace E_n\right\rbrace _n $ be a collection of nested targets, $ \cap_n E_n=\emptyset$ and $z\in  \cap_n \overline{ E_n}$. Then, for all $ n\geq0 $, there exist a periodic point $ p_n\in E_n \cap \partial U$, where $ E_n=\varphi^*(I_n) $. Denote $ \left| I_n\right| =2r_n $ and $\varphi(x)=z$. The period of $ p_n $ increases asymptotically as 
%	\[ \frac{-\ln r_n}{r_n^2}+x^2,\] as $ n\to \infty $.
%\end{maintheorem}

\begin{maintheorem}{\bf (Bound on the period of periodic boundary points)}\label{thm-dt}
	Let $ f $ be an entire function, $ U $ be an expanding Baker domain and let $ \varphi\colon\mathbb{H}\to U $ be a conformal map sending infinity to infinity. Let $z\in\partial U$, and let $E= \varphi^*(I)$ be target, where $I=(x-r,x+r)$. Then, for all $r>0$ small enough, there exist a periodic point $ p_r\in E$. The period of $p_r$ increases asymptotically {at most as fast} as 
	\[ \frac{-\ln r}{r^2}+x^2, \mbox{\quad as $ r\to 0$.}\]
\end{maintheorem}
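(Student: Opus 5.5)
We work throughout in the upper half-plane model. Let $h=\varphi^{-1}\circ f\circ\varphi\colon\mathbb{H}\to\mathbb{H}$, with radial extension $h^*$ on $\mathbb{R}$. The strategy has two stages. First we locate a periodic point of $h^*$ in $I=(x-r,x+r)$ whose period is controlled. This is the half-plane version of Proposition \ref{prop-periodic-general}. Second, we transfer it to a periodic point of $f$ in $E=\varphi^*(I)$, using the expanding hypothesis together with the inverse-branch machinery of Lemma \ref{lemma-baker-inverse-branch} and the techniques of \cite{JF23, jove2024periodicboundarypointssimply}.

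\textbf{Stage one: return times for $h^*$.} Since the Denjoy–Wolff point is not a singularity, $h^*$ near $\infty$ behaves like a parabolic translation, $h(w)=w+c+O(1/w)$. Lebesgue measure on $\mathbb{R}$ is invariant, and $h^*$ is conservative and ergodic. The return-time analysis behind Theorem \ref{thm-a}\ref{thmA.b} (first return probability $\sim n^{-3/2}$, i.e.\ random-walk recurrence) suggests the following.

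\begin{itemize}
\item Starting from a point of $I$, an orbit spends its excursions in the far region of $\mathbb{R}$. The time to hit an interval of length $2r$ is of order $r^{-2}$, from $\mu$-mass considerations and the $\sqrt n$ scaling of occupation times.
\item Returning from the unit scale to a neighbourhood of a point at distance $|x|$ contributes the additive $x^2$ term. This is diffusive scaling: reaching distance $|x|$ costs about $x^2$ steps.
\item The factor $-\ln r$ is the price for a \emph{positive-measure} set of points of $I$ to return within that time, and not just a typical one. We intend to pick $N\asymp \frac{-\ln r}{r^2}+x^2$ such that $\lambda(\{y\in I' : h^{*N'}(y)\in I' \text{ for some } N'\le N\})>0$, where $I'\subset I$ is a slightly shrunk interval.
\end{itemize}

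Given such a returning point $y$, we take a short arc $J\ni y$ with $h^{*N'}$ univalent on a neighbourhood of $J$ and $h^{*N'}(J)\supset \overline{I'}\supset J$. This is possible by expansion along the orbit and by the branches given by the expanding hypothesis (b)–(c) pulled back to $\mathbb{H}$. The Schwarz lemma or Brouwer fixed-point argument then gives a fixed point of $h^{*N'}$ in $J\subset I$.

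\textbf{Stage two: transfer to $f$.} Work in the dynamical plane with the inverse branches $F_n$ of Lemma \ref{lemma-baker-inverse-branch}, which are defined on discs $D(\cdot,r_0)$ centred at boundary points. Take a boundary point $p\in E$ whose $f$-orbit returns to $D(p,r_0/2)\cap E$ at a time $N'\le N$. The branch of $f^{-N'}$ sending $f^{N'}(p)$ back near $p$ maps $D(f^{N'}(p),r_0)$ compactly into itself. This uses the contraction of Theorem \ref{thm-c}, sharpened by the uniform factor $K_E>1$ on finite targets, so that even a bounded number of steps inside $E$ gives strict contraction. It therefore has an attracting fixed point. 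That point is a repelling periodic point of $f$ of period dividing $N'$. It is a limit of $\varphi$-images of the corresponding fixed points of $h^{*N'}$, hence lies in $\partial U$, and compatibility of targets places it in $E$.

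\textbf{Main obstacle.} The hardest step is the quantitative return estimate with the explicit $-\ln r/r^2$ bound: Theorem \ref{thm-a} gives only asymptotic, almost-everywhere information, not uniform-in-$r$ bounds. The first approach to try is a second-moment (Kac-type) argument on the induced map to a fixed finite target $E_0$. The returns to $E_0$ have tail $\sim n^{-1/2}$ (Theorem \ref{thm-a}\ref{thmA.c}). The induced map is uniformly expanding, so it has an invariant probability with bounded distortion and, by Adler-type estimates, mixes exponentially. Hitting an interval of relative mass $r$ under the induced map then takes about $r^{-1}\ln(1/r)$ induced steps, the logarithm coming from the exponential mixing. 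Each induced step costs real time with the $n^{-1/2}$ tail, which converts $k$ induced steps into $\asymp k^2$ real steps; this yields $\frac{\ln(1/r)}{r^2}$. Finally, the offset $x$ adds $x^2$ by the same diffusive scaling. If the mixing rate is unavailable, the fallback is Proposition \ref{prop-periodic-general}, used directly after conjugating the disc to $\mathbb{H}$. A disc of radius comparable to $r/(1+x^2)$ around the image of $x$ gives the same order of period, up to the claimed additive term.
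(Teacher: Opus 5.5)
\textbf{Stage one does not prove the bound.} The paper's proof is a direct application of Proposition~\ref{prop-periodic-general} on $\mathbb{R}$ to the interval $B=I=(x-r,x+r)$ itself. This gives a periodic point of $T=h^*$ of period at most $\bigl(\frac{-\ln r}{\ln K}+1\bigr)\frac{W}{cr^2}+\frac{x^2}{c}$. Hypothesis (b) of the expanding definition is then invoked to place the periodic point on $\partial U$.

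Your random-walk and mixing arguments describe typical orbits or limits in distribution. What you need is a deterministic guarantee that some subinterval of $I$ returns with enough expansion. Even as heuristics they give the wrong order: $k\asymp r^{-1}\ln(1/r)$ induced steps, converted quadratically into real time, give $k^2\asymp r^{-2}\ln^2(1/r)$, not $r^{-2}\ln(1/r)$. Your normal form is also off. $h(w)=w+c+O(1/w)$ with $c\neq0$ is the simply parabolic case; in the doubly parabolic case the constant vanishes and $h(w)=w-c/w+\cdots$, which is what produces the $x^2/c$ entry time.

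In the paper the bound is worst-case, via Lemma~\ref{lemma-mapping-gen}. There $T'\geq K$ on $D$, and a point of $D$ at distance at least $r$ from the preimages of $\infty$ is mapped into $[-W/r,W/r]$, so it returns to $D$ within $W/(cr^2)$ steps. The logarithm counts the roughly $-\ln r/\ln K$ visits to $D$ needed to blow $B$ up to unit size, each costing at most $W/(cr^2)$ steps. The same count makes the pulled-back interval shorter than $|B|/4$, so that the inverse branch of $T^N$ maps $B$ strictly into itself. A single point $y$ with $h^{*N'}(y)\in I'$, as in your plan, does not give $h^{*N'}(J)\supset\overline{I'}\supset J$.

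Your fallback is the right tool, but it needs no change of model: the Proposition is already stated for intervals $(x-r,x+r)\subset\mathbb{R}$. Passing to a disc of radius $\sim r/(1+x^2)$ would make $x$ enter multiplicatively, roughly as $(1+x^2)^2$, rather than as the additive $x^2$.

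\textbf{Stage two has a separate gap.} Proposition~\ref{prop-targets} transfers visits to targets from $h^*$-orbits to $f$-orbits; it does not transfer Euclidean proximity. Moreover, a target $\varphi^*(I)$ is not Euclidean-small: it accumulates at $\infty$ near the endpoints of $I$, where $\varphi^*=\infty$. So nothing in your argument produces a point $p\in E$ with $f^{N'}(p)\in D(p,r_0/2)$. Without it, the branch of $f^{-N'}$ on $D(f^{N'}(p),r_0)$ need not map that disc into itself, however strongly it contracts. Theorem~\ref{thm-c} is also the wrong input here: it is an almost-everywhere statement with an orbit-dependent $n_0$, not a statement about an orbit you construct.

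The phrase ``a limit of $\varphi$-images of the fixed points of $h^{*N'}$'' is not meaningful. $\varphi$ has no continuous extension to $\mathbb{R}$, and $\varphi^*$ may fail to exist, or equal $\infty$, at those points. For locating the point on $\partial U$, the paper's (brief) argument relies instead on hypothesis (b), $F_y(D(x,r)\cap U)\subset U$. The contracting inverse branches preserve $U$, so the fixed point is accumulated by points of $U$. It cannot lie in $U$, because orbits in a Baker domain escape. That ingredient, together with the inverse-branch techniques of \cite{JF23, jove2024periodicboundarypointssimply}, should replace your Euclidean-return argument.
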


Finally, we apply the obtained results to describe the boundary of the Baker domain of the function $f(z)=z+e^{-z}$. The boundary of such Baker domain has been significantly studied in \cite{FagellaJove}, giving a detailed description of both the topology and the dynamics. More precisely, the boundary of such Baker domain is made out of hairs (curves of escaping points), some of them land at a point and others accumulate in  indecomposable continua, and that the dynamics of the hairs is encoded by some symbolic dynamics.  With the previously developed techniques we are able to prove that almost every of such hairs land, and we prove some estimates on the measure of codes satisfying some specific dynamical properties (see Theorem \ref{thm-e} for a precise statement).

\begin{obs*}
	We restricted ourselves to entire functions, since we need Fatou components to have infinitely many accesses to infinity (and preimages of infinity being dense in the unit circle), so that the concept of target makes sense and partititions the boundary in an interesting way. For meromorphic functions, this is no longer the case: there exist doubly parabolic Baker domains with a unique access to infinity, and their boundary may even be a Jordan curve \cite{BFJK-Accesses, BFJK-localcon}.
\end{obs*}

\subsection*{Structure of the paper}
We collect some needed background on Infinite Ergodic Theory  and AFN-systems in Section \ref{background-IET}, which will serve as a framework for the ergodic study of our maps. In Section \ref{sect-dpinnerfunctions}, we study doubly parabolic inner functions (the inner functions that can be associated to the Baker domains considered) and their mapping properties. In Section \ref{section-ergodic-DP} we prove a finer measure-theoretic description of the boundary dynamics of doubly parabolic inner functions (including estimates on the asymptotic distribution of orbits and periodic points).

Finally, in Section \ref{section-boundariesBD} we transfer all the results for doubly parabolic inner functions to the boundaries of Baker domains, using the targets and their compatibility with the Riemann map. The proof of the main theorems can be found in this section, even though they follow as direct  applications of the  work in the previous sections.

As mentioned earlier, the previous results have analogues in the case of basins of attraction (and, therefore, for centered inner functions), which can be obtained quite easily from classical results in ergodic theory (such as the Birkhoff Ergodic Theorem). We collect these results in Appendix \ref{appendix}, and invite the reader to consult them in order to appreciate the difficulties that arise intrinsically when an invariant probability measure is not available.

\begin{notation*}
	Throughout the paper, we write $ A\lesssim B $ to signify that $ A\leq CB $ for some constant $ C>0 $. If both $ A\lesssim B $ and $ B\lesssim A $, then we write $ A\sim B $. Given an interval $ I\subset \mathbb{R} $, we denote by $ \left| I \right|  $ its length.
\end{notation*}

\section{Background on Infinite Ergodic Theory}\label{background-IET} 
Let $ (X, \mathcal{A},\mu) $ be a measure space, and $T\colon X\to X$ be a measure-preserving transformation. We are interested in the case when $ \mu $ is an infinite $\sigma$-finite measure.

The main difficulty when dealing with measure-preserving transformations for which the preserved measure is not finite, but just $\sigma$-finite, is that the standard results in ergodic theory (such as the Birkhoff Ergodic Theorem or Kac's Lemma; see Sect. \ref{appendix-erg-th}) fail to be true. We refer e.g.  to \cite{Aaronson97}, \cite{notesInfErgodicTheory}, \cite[Chap. 10]{URM1} for basic background in this field, and we state here only the results that we need.

When dealing with infinite measure spaces, the use of sweep-out sets (which can be viewed as finite-measure building blocks of the system)  is particularly useful as it provides a finite-measure framework to work with.

\begin{defi}{\bf (Sweep-out set)}
	Let $T\colon X\to X$ be a measure-preserving transformation of a $\sigma$-finite measure space $ (X, \mathcal{A}, \mu) $. A set $A\in \mathcal{A}$, with $0<\mu(A)<\infty$ is a {\em sweep-out set} for the transformation $T$ if 
	\[\mu\Big(X\smallsetminus\bigcup_{n=0}^\infty T^{-n}(A)\Big)=0.\]
\end{defi}

Note that, if the transformation $T$ is ergodic and conservative, then every set of positive measure is a sweep-out set \cite[Corol. 10.1.14]{URM1}. The following lemma, which is well-known (compare e.g. \cite{notesInfErgodicTheory}), relates first return times and sweep-out sets.

\begin{lemma}{\bf (Return times to sweep-out sets)}\label{lemma-zweimuller}
	Let $T\colon X\to X$ be a measure-preserving transformation of a $\sigma$-finite measure space $ (X, \mathcal{A}, \mu) $. Let $A\in \mathcal{A}$ be a sweep-out set, $A^C=X\smallsetminus A$, then
	\[\mu (A\cap \left\lbrace \tau_A>n \right\rbrace)=\mu(A^C\cap\left\lbrace \tau_A=n \right\rbrace ),\] where $ \tau_A $ denotes the first return time to $ A $. 
\end{lemma}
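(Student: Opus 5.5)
The plan is to use measure preservation on the set $T^{-1}(A)$, splitting it once according to whether the preimage point lies in $A$ or in $A^C$, and then to identify the resulting pieces with first-return-time events by induction on $n$. For $n\ge 1$ write $B_n \coloneqq \{x\in X : x\notin A,\ Tx\notin A,\dots, T^{n-1}x\notin A\}$ when taking points in $A^C$, and use the standard convention that for $x\in A^C$, $\tau_A(x)$ is the first hitting time $\min\{k\geq 1: T^kx\in A\}$. Then $A^C\cap\{\tau_A> n\} = A^C\cap T^{-1}A^C\cap\dots\cap T^{-n}A^C$ and $A\cap\{\tau_A>n\}=A\cap T^{-1}A^C\cap\dots\cap T^{-n}A^C$. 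I would set $D_n\coloneqq \bigcap_{k=0}^{n}T^{-k}A^C = A^C\cap\{\tau_A>n\}$, with the convention $D_{-1}\coloneqq X$. The key identity is the disjoint decomposition
\[ T^{-1}D_{n-1} = \bigl(A\cap\{\tau_A>n\}\bigr)\ \sqcup\ D_n, \]
obtained by splitting $T^{-1}D_{n-1}=\bigcap_{k=1}^{n}T^{-k}A^C$ according to whether $x\in A$ or $x\in A^C$. Likewise, $D_{n-1}=\bigl(A^C\cap\{\tau_A=n\}\bigr)\sqcup D_n$, since $A^C\cap\{\tau_A=n\}=D_{n-1}\cap T^{-n}A$.

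Next I would use invariance, $\mu(T^{-1}D_{n-1})=\mu(D_{n-1})$, which combined with the two decompositions gives
\[ \mu(A\cap\{\tau_A>n\})+\mu(D_n)=\mu(A^C\cap\{\tau_A=n\})+\mu(D_n). \]
The main obstacle is that $\mu(D_n)$ may be infinite, so it cannot simply be cancelled. This is exactly where the sweep-out hypothesis enters: $\bigcap_{n}D_n = X\smallsetminus\bigcup_{k\ge0}T^{-k}A$ is null. However, this alone does not make $\mu(D_n)$ finite, so I would avoid cancellation altogether. Instead I would iterate the decompositions to express the sets through $A$-pieces of finite measure. Applying $T^{-1}$ repeatedly gives $T^{-j}D_{n-j}$, and telescoping yields, for every $m\geq n$,
\[ \bigcap_{k=1}^{n}T^{-k}A^C\ \text{restricted to}\ A = \bigsqcup_{j}\ldots, \]
and the same pattern lets one check that both sides equal $\mu\bigl(\{x\in A^C: \tau_A(x)=n\}\bigr)$ via a direct bijection-type argument on level sets.

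A cleaner route I would actually pursue is to decompose by the last visit to $A$. The set $A^C\cap\{\tau_A=n\}$ is, up to a null set, the disjoint union over $j\ge1$ of
\[ T^{-j}\bigl(A\cap\{\tau_A=n+j\}\bigr)\cap\{x: x,Tx,\dots,T^{j-1}x\in A^C\ \text{and}\ x \text{ has no earlier visit}\}. \]
The sweep-out property guarantees that a.e. point of $A^C$ has a previous-time preimage structure of this kind. More directly, for each $j$ the sets $A\cap\{\tau_A>n\}$ and $T^{-j}$-pieces have measures summing by invariance and $\sigma$-additivity, and only finite quantities bounded by $\mu(A)<\infty$ appear. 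Concretely, summing the invariance relation $\mu(T^{-1}(D_{k-1}\cap T^{-(n-k)}\cdots))$ over $k$ gives a telescoping sum whose remainder term is $\mu\bigl(D_m\cap T^{-m}(\cdots)\bigr)\le\mu(T^{-m}(A\cap\ldots))$. This remainder tends to $0$ as $m\to\infty$, because $\bigcap_m D_m$ is null while the sets involved are contained in preimages of subsets of $A$, which have finite measure, so continuity from above applies. Passing to the limit yields $\mu(A\cap\{\tau_A>n\})=\mu(A^C\cap\{\tau_A=n\})$.
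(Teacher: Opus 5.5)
The paper gives no proof of this lemma; it quotes it as well known. Your argument therefore has to stand on its own, and as written it does not close.

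Your two decompositions $T^{-1}D_{n-1}=(A\cap\{\tau_A>n\})\sqcup D_n$ and $D_{n-1}=(A^C\cap\{\tau_A=n\})\sqcup D_n$ are correct. You are also right that $\mu(D_n)$ cannot be cancelled. When $\mu(X)=\infty$ it is always infinite, since $X\setminus D_n=\bigcup_{k=0}^{n}T^{-k}A$ has measure at most $(n+1)\mu(A)$.

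After that, the argument breaks down. The telescoping display is only a placeholder. The ``last visit'' decomposition is incorrect as written: a point with $x,\dots,T^{j-1}x\in A^C$ and $T^jx\in A$ has hitting time $j$. So only $j=n$ contributes, and you get a proper subset of $A^C\cap\{\tau_A=n\}$. The correct version decomposes $T^{-k}(A^C\cap\{\tau_A=n\})$ by the last visit to $A$ before time $k$, and gives
\[
\mu(A^C\cap\{\tau_A=n\})=\mu(A\cap\{n<\tau_A\le n+k\})+\mu(A^C\cap\{\tau_A=n+k\}).
\]
Everything then depends on the remainder $r_{n+k}\coloneqq\mu(A^C\cap\{\tau_A=n+k\})$ tending to $0$, and continuity from above cannot give this. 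The sets $A^C\cap\{\tau_A=N\}$ are pairwise disjoint rather than decreasing, each has measure up to $\mu(A)$, and the null set $\bigcap_m D_m$ is a limit of sets of infinite measure. In fact, sweep-out forces $\mu(A\cap\{\tau_A=\infty\})=0$, so given the identity above, $r_N\to0$ is equivalent to the lemma itself. The obstacle has been moved, not removed.

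The missing idea is to apply $\mu$ to the complements of your own decompositions, which have finite measure. Write $Y_N\coloneqq X\setminus D_{N-1}=\bigcup_{k=0}^{N-1}T^{-k}A$. Your decompositions then become
\[
Y_{n+1}=T^{-1}Y_n\sqcup\bigl(A\cap\{\tau_A>n\}\bigr)=Y_n\sqcup\bigl(A^C\cap\{\tau_A=n\}\bigr),\qquad n\ge 1.
\]
Since $\mu(T^{-1}Y_n)=\mu(Y_n)\le n\,\mu(A)<\infty$, you can cancel and obtain the identity directly, with no limiting argument.

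Alternatively, induct on $n$. The base case $n=1$ follows from $\mu(T^{-1}A)=\mu(A)<\infty$. The inductive step uses $T^{-1}(A^C\cap\{\tau_A=n\})=\{\tau_A=n+1\}$, and all sets involved have finite measure.

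Either way, only invariance and $\mu(A)<\infty$ are used. The sweep-out property is not what resolves the difficulty. Its role is to ensure $w_N(A)=\mu(Y_N)\nearrow\mu(X)$.
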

{We now introduce a quantity, the wandering rate, that captures the cumulative time spent outside the sweep-out set before returning.
\begin{defi}{\bf (Wandering rate)}
		For a sweep-out set $A$,  the \emph{wandering rate} $w_n(A)$ is defined as 
	\[w_n(A)=\sum_{k=0}^{n-1}\mu(A\cap \{\tau_A >k\}).\]
\end{defi}}

The following pointwise version of an ergodic theorem applies to measure-preserving transformations of $ \sigma $-finite measure spaces.
\begin{thm}{\bf (Hopf Ergodic Theorem, {\normalfont \cite[Thm. 10.3.3]{URM1}})}\label{hopf-ergodic-thm}
	Let $T\colon X\to X$ be an ergodic and conservative measure-preserving transformation of a $\sigma$-finite measure space $ (X, \mathcal{A}, \mu) $. If $f,g\in L^1(\mu)$ and $\int_Xg d\mu\neq 0$, then 
	\[\lim\limits_n \frac{ \sum_{k=0}^{n-1} f(T^k(x))}{ \sum_{k=0}^{n-1} g(T^k(x))}=\frac{\int_Xf d\mu}{\int_Xg d\mu}, \hspace{0.5cm}\textrm{for }\mu\textrm{-a.e. }x\in X.\]
\end{thm}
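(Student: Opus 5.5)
The plan is to reduce the ratio theorem to the classical Birkhoff Ergodic Theorem for an induced system on a set of finite measure, and then interpolate between successive return times. It suffices to prove
\[\lim_n \frac{S_nf(x)}{S_n\mathbbm{1}_A(x)}=\frac{\int_X f\,d\mu}{\mu(A)}\quad\text{for }\mu\text{-a.e. }x\in X\]
for one fixed set $A$ with $0<\mu(A)<\infty$ and every $f\in L^1(\mu)$. Here $S_nf=\sum_{k=0}^{n-1}f\circ T^k$. Applying this to $f$ and to $g$ and dividing gives the statement, since $\int g\,d\mu\neq 0$. By linearity we may also assume $f\geq 0$, treating $f^+$ and $f^-$ separately.

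\textbf{Step 1: the induced system.} Since $T$ is ergodic and conservative, $A$ is a sweep-out set by the remark following the definition of sweep-out set. By conservativity, $\tau_A<\infty$ a.e. on $A$, so the first return map $T_A(x)=T^{\tau_A(x)}(x)$ is defined a.e. on $A$. We check the standard facts:
\begin{itemize}
\item $T_A$ preserves the finite measure $\mu|_A$.
\item $T_A$ is ergodic, because a $T_A$-invariant subset $B\subset A$ generates the $T$-invariant set $\bigcup_n T^{-n}B$ up to null sets.
\item For $f\geq0$, the induced function $f_A=\sum_{k=0}^{\tau_A-1}f\circ T^k$ satisfies $\int_A f_A\,d\mu=\int_X f\,d\mu$.
\end{itemize}
The last identity is a Kac-type tower decomposition. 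We write $X$ (mod $0$) as the disjoint union of the sets $T^k(A\cap\{\tau_A>k\})$, or equivalently use Lemma~\ref{lemma-zweimuller} together with the sweep-out property. In particular $f_A\in L^1(\mu|_A)$.

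\textbf{Step 2: Birkhoff along returns.} Apply Birkhoff's theorem to $T_A$ and $f_A$ on the probability space $(A,\mu|_A/\mu(A))$. For a.e. $x\in A$ this gives
\[\frac1j\sum_{i=0}^{j-1}f_A(T_A^i x)\to \frac{\int_X f\,d\mu}{\mu(A)}.\]
Let $N_j(x)$ be the time of the $j$-th return of $x$ to $A$, with $N_0=0$. The sum above equals $S_{N_j}f(x)$. For $N_j< n\le N_{j+1}$ we have $S_n\mathbbm{1}_A(x)=j+1$. Since $f\geq0$,
\[S_{N_j}f\leq S_nf\leq S_{N_{j+1}}f.\]
Dividing by $j+1$ and letting $j\to\infty$, both bounds tend to the same limit. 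This proves the claim for a.e. $x\in A$.

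\textbf{Step 3: extension to all of $X$.} This is the main point of care, since Birkhoff on $A$ only controls points of $A$. By the sweep-out property, a.e. $x\in X$ has a first entry time $m=m(x)<\infty$ into $A$, and $T^m x$ is a generic point of $A$ for Step 2; the latter holds because the exceptional set in $A$ is null and $T$ is nonsingular, so its preimages are null. Then
\[S_nf(x)=S_mf(x)+S_{n-m}f(T^mx)\quad\text{and}\quad S_n\mathbbm{1}_A(x)=S_{n-m}\mathbbm{1}_A(T^mx).\]
The error term $S_mf(x)$ is a fixed finite number. The denominators tend to $\infty$ because $T^mx$ returns to $A$ infinitely often, again by conservativity. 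So the limit is unchanged.

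Finally, for general $f,g\in L^1$ we combine the two ratios $S_nf/S_n\mathbbm{1}_A$ and $S_ng/S_n\mathbbm{1}_A$. The second converges to $\int g\,d\mu/\mu(A)\neq0$, so the quotient of $S_nf$ by $S_ng$ is eventually well-defined and converges to $\int f\,d\mu/\int g\,d\mu$.
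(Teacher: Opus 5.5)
The paper does not prove this statement; it simply quotes it from \cite{URM1}, so there is no in-paper argument to compare against. Your argument is the standard proof by inducing on a set $A$ of finite measure, and its architecture is sound. Two parts are correct as written: the sandwich $S_{N_j}f\le S_nf\le S_{N_{j+1}}f$ with $S_n\mathbbm{1}_A=j+1$ in Step~2, and the passage from a.e.\ $x\in A$ to a.e.\ $x\in X$ via the first entrance time and nonsingularity in Step~3.

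The step whose justification does not hold as written is the identity $\int_A f_A\,d\mu=\int_X f\,d\mu$. Decomposing $X$ into the forward images $T^k(A\cap\{\tau_A>k\})$ is the Kakutani tower of an \emph{invertible} map. For non-invertible $T$, which is the situation throughout this paper, these levels overlap and $T^k$ does not carry $\mu$ to $\mu$ on them. For the doubling map with $A=[0,\frac12)$, the levels $k=1$ and $k=2$ both equal $[\frac12,1)$, while $\mu(A\cap\{\tau_A>1\})=\frac14$. Nor is this equivalent to Lemma~\ref{lemma-zweimuller}: summed over $n$, that lemma only yields the case $f\equiv 1$, i.e.\ Kac's formula $\int_A\tau_A\,d\mu=\mu(X)$.

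The repair is short. For $\mu(E)<\infty$, iterate $\mu(E\cap A^c)=\mu(T^{-1}(E\cap A^c))$ to get
\[
\mu(E)=\sum_{k=0}^{n}\mu\bigl(A\cap\{\tau_A>k\}\cap T^{-k}E\bigr)+\mu\Bigl(\bigcap_{j=0}^{n}T^{-j}A^c\cap T^{-n}E\Bigr).
\]
Now estimate the remainder term. The set $E\cap A$ contributes nothing to it. Split $E\cap A^c$ according to whether the first entrance time to $A$ is at most $m$ or larger than $m$.
\begin{itemize}
\item The first part contributes at most $\sum_{i=1}^{m}\mu(A^c\cap\{\tau_A=n+i\})=\sum_{i=1}^{m}\mu(A\cap\{\tau_A>n+i\})$. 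This tends to $0$ as $n\to\infty$ by Lemma~\ref{lemma-zweimuller} and conservativity.
\item The second part contributes at most $\mu(E\cap A^c\cap\{\tau_A>m\})$. This is small for large $m$ by the sweep-out property.
\end{itemize}
Hence $\mu(E)=\sum_{k\ge0}\mu(A\cap\{\tau_A>k\}\cap T^{-k}E)$. Then $\sigma$-finiteness, simple functions and monotone convergence give $\int_A f_A\,d\mu=\int_X f\,d\mu$ for all $f\ge 0$.

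The same preimage expansion is also what shows that $T_A$ preserves $\mu|_A$ for non-invertible $T$. In that case the remainder is bounded by $\mu(A^c\cap\{\tau_A=n\})=\mu(A\cap\{\tau_A>n\})\to 0$.

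Finally, your ergodicity argument needs two more sentences. The set $\widetilde B=\bigcup_{n\ge0}T^{-n}B$ only satisfies $T^{-1}\widetilde B\subset\widetilde B$, and it is conservativity that makes it invariant mod $0$. You also need $\widetilde B\cap A=B$ mod $0$, since points of $A\smallsetminus B$ never reach $B$. Only then can you conclude $\mu(B)=0$ or $\mu(A\smallsetminus B)=0$.
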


In the particular case when $f$ and $g$ are the indicator functions of two measurable sets $E,F\subset X$ of positive finite measure, we get
\[\lim\limits_n\frac{ \# \left\lbrace 0\leq k\leq n-1: T^k(x)\in E \right\rbrace}{ \# \left\lbrace 0\leq k\leq n-1: T^k(x)\in F \right\rbrace}=\frac{\mu(E)}{\mu(F)},\]i.e. the occupation times are asympotically comparable.

Finally, let $L^1_+(\mu)$ denote the class of functions  $f\in L^1(\mu) $ such that $f\geq0$ and $ \int_Xfd\mu<\infty $. We need the following result by Aaronson.

\begin{thm}{\bf (Rescaling sequences, {\normalfont{\cite[Thm. 2.4.1]{Aaronson97}, \cite[Thm. 10.3.6]{URM1}}})}\label{thm-aaronson}
	Let $T\colon X\to X$ be an ergodic and conservative measure-preserving transformation of a $\sigma$-finite measure space $ (X, \mathcal{A}, \mu) $. Let $\left\lbrace a(n)\right\rbrace _n$ be a sequence such that 
	\[a(n)\nearrow\infty, \hspace{0.4cm}\frac{a(n)}n\searrow 0, \hspace{0.4cm}\textrm{as } n\to\infty.\]
	If there exists $A\in \mathcal{A}$ such that $0<\mu(A)<\infty$ and $ \int_A a(\tau_A(x))d\mu(x)<\infty $, where $\tau_A$ denotes the first return time to $A$, then
	\[\lim\limits_n\frac1{a(n)}\sum_{k=0}^{n-1} f(T^k(x)) =\infty \hspace{0.4cm} \textrm{ for }\mu\textrm{-a.e. }x\in X, \textrm{ for all }f\in L^1_+(\mu).\]
\end{thm}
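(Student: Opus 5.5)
The proof reduces to the case $f=\mathbbm{1}_A$ and then uses the induced map on $A$, which preserves a finite measure. Let $S_n(x)=\sum_{k=0}^{n-1}\mathbbm{1}_A(T^k(x))$.

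\emph{Reduction to $f=\mathbbm{1}_A$.} By the Hopf Ergodic Theorem (Theorem \ref{hopf-ergodic-thm}) applied with $g=\mathbbm{1}_A$, for every $f\in L^1_+(\mu)$ with $\int_X f\,d\mu>0$ we have
\[\frac{\sum_{k<n} f\circ T^k}{S_n}\longrightarrow \frac{\int_X f\,d\mu}{\mu(A)}>0 \qquad \mu\textrm{-a.e.}\]
So it suffices to prove $S_n/a(n)\to\infty$ almost everywhere. The case $\int_X f\,d\mu=0$ is degenerate, since then $f=0$ a.e. and the conclusion fails, so it must be excluded implicitly.

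\emph{Reduction to points of $A$.} By ergodicity and conservativity, almost every $x\in X$ enters $A$ at some finite time $m$. After that time the Birkhoff sums of $x$ and of $T^m(x)$ differ by at most $m$. Since $a(n)/n$ is nonincreasing, $a$ is subadditive:
\[a(n+m)\le a(n)+a(m), \qquad\textrm{hence}\qquad a(n+m)/a(n)\to 1 .\]
Therefore it is enough to prove the claim for $\mu$-a.e. $x\in A$.

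\emph{The induced map.} On $A$ consider $T_A=T^{\tau_A}$, which preserves the finite measure $\mu|_A$ and is ergodic, since $T$ is ergodic and conservative. Write $\tau_A^{(n)}=\sum_{j=0}^{n-1}\tau_A\circ T_A^j$ for the $n$-th return time. The key claim is
\[\frac{a\big(\tau_A^{(n)}\big)}{n}\to 0 \qquad \mu\textrm{-a.e. on } A.\]
Granting it, fix $x\in A$ and $N$. The $S_N(x)$-th return time satisfies $\tau_A^{(S_N(x))}(x)\ge N$, because the number of visits before time $N$ is $S_N(x)$. Since $a$ is increasing,
\[a(N)\le a\big(\tau_A^{(S_N(x))}(x)\big)=o\big(S_N(x)\big),\]
using $S_N(x)\to\infty$, which follows from conservativity. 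This is exactly $S_N/a(N)\to\infty$.

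\emph{Proof of the key claim (main step).} Fix $K$ and split each summand $\tau_A\circ T_A^j$ according to whether it is at most $K$ or larger than $K$. Monotonicity and subadditivity of $a$ give
\[a\big(\tau_A^{(n)}\big)\le a(nK)+\sum_{j=0}^{n-1}\big(a(\tau_A)\mathbbm{1}_{\{\tau_A>K\}}\big)\circ T_A^j .\]
The first term is $o(n)$, because $a(m)/m\to 0$. By the Birkhoff Ergodic Theorem for $T_A$, the second term divided by $n$ converges to $\int_{A\cap\{\tau_A>K\}}a(\tau_A)\,d\mu/\mu(A)$. This tail tends to $0$ as $K\to\infty$, precisely by the hypothesis $\int_A a(\tau_A)\,d\mu<\infty$.

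The main obstacle is this truncation: the crude estimate $a(\tau_A^{(n)})\le\sum_j a(\tau_A\circ T_A^j)$ only yields $\liminf S_N/a(N)>0$. Obtaining divergence to $\infty$ requires using $a(m)/m\to 0$ to absorb the bounded part of the return times, and using integrability to make the tail contribution arbitrarily small.
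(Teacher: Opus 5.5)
The paper does not prove this statement. It is quoted from \cite{Aaronson97} and \cite{URM1}, and only used later (with $a(n)=n^{1/2-\varepsilon}$ in the proof of Theorem~\ref{thm-a-in}). So there is no internal argument to compare against, and your proposal must stand as a self-contained proof. It does: the argument is correct.

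\textbf{The reductions are sound.}
\begin{itemize}
\item Hopf's ratio theorem with $g=\mathbbm{1}_A$ reduces everything to the occupation time $S_n$ of $A$.
\item Ergodicity and conservativity make $A$ a sweep-out set, so almost every $x$ enters $A$. You should add one line here: the entrance point $T^m(x)$ lies, for a.e.\ $x$, in the full-measure subset of $A$ where the key claim holds, since $T^{-m}$ of a null set is null.
\item The inversion $\tau_A^{(S_N(x))}(x)\ge N$, together with monotonicity of $a$ and $S_N(x)\to\infty$, correctly turns $a(\tau_A^{(n)})=o(n)$ into $S_N/a(N)\to\infty$.
\end{itemize}

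\textbf{The key claim is also handled correctly.}
\begin{itemize}
\item Subadditivity of $a$ follows from $a(n)/n$ being nonincreasing.
\item Returns of length at most $K$ contribute at most $a(nK)=o(n)$, because $a(m)/m\to 0$.
\item Long returns are controlled by Birkhoff's theorem for the ergodic finite-measure induced map $T_A$, applied to $a(\tau_A)\mathbbm{1}_{\{\tau_A>K\}}\in L^1(\mu|_A)$.
\item Intersecting the full-measure sets over $K\in\mathbb{N}$ and letting $K\to\infty$ (dominated convergence on the tail integral) gives $a(\tau_A^{(n)})=o(n)$ almost everywhere on $A$.
\end{itemize}

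As you observe, this truncation is the one non-routine step. The untruncated subadditive bound only gives $a(\tau_A^{(n)})=O(n)$, hence merely $\liminf S_N/a(N)>0$.

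Your caveat about $f$ is right. The paper's definition of $L^1_+(\mu)$ requires only $f\ge 0$ and $\int f\,d\mu<\infty$, which admits $f=0$, and for $f=0$ the conclusion is false. The class for which the theorem holds (and the one needed in its application) is nonnegative integrable $f$ with $\int_X f\,d\mu>0$.
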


In our applications (Section \ref{section-ergodic-DP}), $ a(n) $ is of the form $n^\alpha$, for some $\alpha\in (0,1)$ and $f$ is the indicator function of some measurable set $E\subset X$ of positive measure. This way we will determine that the occupation times of the set $E$ grow slower that the sequence $n^\alpha$, almost everywhere.

While there is no hope of pointwise ergodic theorems for infinite ergodic systems, it is often possible to find a limit in a weaker sense. We  use the following definition.

\begin{defi}{\bf (Strong distributional convergence)}
	Take a sequence of measurable random functions $R_n$ (taking values in $\mathbb{R}\cup{\pm\infty}$) defined on a $\sigma$-finite measure space $ (X, \mathcal{A}, \mu) $. 
We denote the distributional convergence of $(R_n)$ w.r.t. $\nu$ as $R_n \stackrel{\nu}{\Longrightarrow} R$.
	
\noindent	We say that $R$ is a \emph{strong distributional limit} of the sequence $R_n$ if $R_n \stackrel{\nu}{\Longrightarrow} R$ for any probability measure $\nu \ll \mu$. We denote this by $R_n \stackrel{\mathcal{L}(\mu)}{\Longrightarrow} R$.
\end{defi}

\subsection{Basic AFN-systems}\label{subsec:AFN}

An important family of infinite measure-preserving dynamical systems is given by piecewise-$ \mathcal{C}^2 $ interval maps with indifferent (neutral) fixed points, introduced by Thaler and Zweimuller \cite{ThalerZweimuller}. As explained in the introduction, we will prove that the maps that we consider are AFN-systems, and thus we can apply Thaler and Zweimuller's results, which we reproduce here. 

The definition of a basic AFN-system is as follows (by $ \lambda $ we denote here the Lebesgue measure on $ \mathbb{R} $).

\begin{defi}{\bf (Basic AFN-system)}\label{def:AFN}
Let $X\subset \mathbb{R}$ be a bounded interval, and let $\mathcal{I}=(I_k)_k$ be a countable collection of nonempty disjoint subintervals of $ X $, such that $\lambda(X \backslash\bigcup_k I_k)=0$. Let $T\colon X\to X$ be a map, such that $T|_{I_k}$ is $\mathcal{C}^2$ and strictly monotonic, for every interval $ I_k\in \mathcal{I} $. Assume that $T$ is conservative and ergodic {with respect to $ \lambda $}.	

\noindent The triple $(X,T,\mathcal{I})$ is a {\em basic AFN-system} if the following conditions are satisfied:
	\begin{enumerate}[label = (\roman*)]
		\item \emph{Adler's condition:} $|T''/(T')^2|$ is bounded on $\mathcal{I}$.
		\item \emph{Finite image condition:} $T\mathcal{I} = \{T(I_k):I_k\in \mathcal{I}\}$ is {a finite set}.
		\item There is a finite, nonempty subset $\mathcal{J} \subset \mathcal{I}$ of intervals $(I_{k_l})_{l=1}^j$ having one parabolic fixed point $x_l$ as an endpoint,  and {$x_l$} is a one-sided regular source, i.e. $$(x-x_l)\ T''(x)\geq 0, \quad x\in I_{k_l},\ I_{k_l}\in \mathcal{J}. $$
		\item {For $ I_{k_l}\in \mathcal{J} $, $ x\to x_l $ in $ I_{k_l} $, there is $ p_l\in\mathbb{N}^* $ and $ a_l>0 $, such that \[T(x) = x +a_l |x-z_l|^{1+p_l} + o\Big(|x-z_l|^{1+p_l}\Big).\]}
		\item The map $T$ is uniformly expanding on a set bounded away from $\{x_l : l=1,\ldots, j\}$. 
	\end{enumerate}
\end{defi}

See Figure \ref{fig-AFN} for two examples of such maps. 
 \begin{figure}[htb!]\centering
	\captionsetup[subfigure]{labelformat=empty}
	\hfill
	\begin{subfigure}[b]{0.45\textwidth}
		\includegraphics[width=\textwidth]{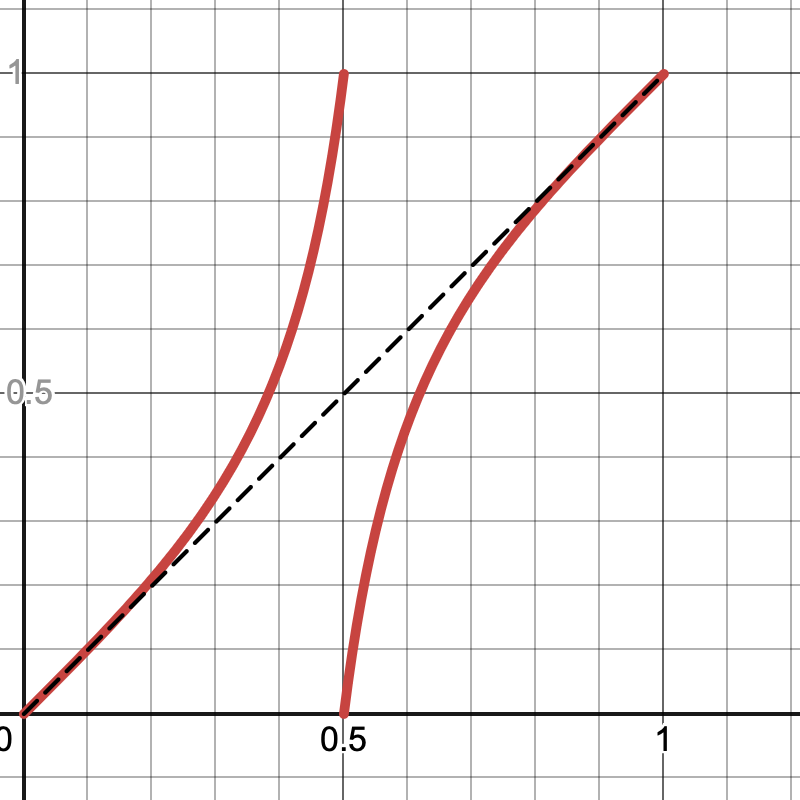}
%		\caption{\footnotesize $ x\mapsto 1/\tan(1/x) $}
		
	\end{subfigure}
	\hfill
	\hfill
	\begin{subfigure}[b]{0.45\textwidth}
		\includegraphics[width=\textwidth, ]{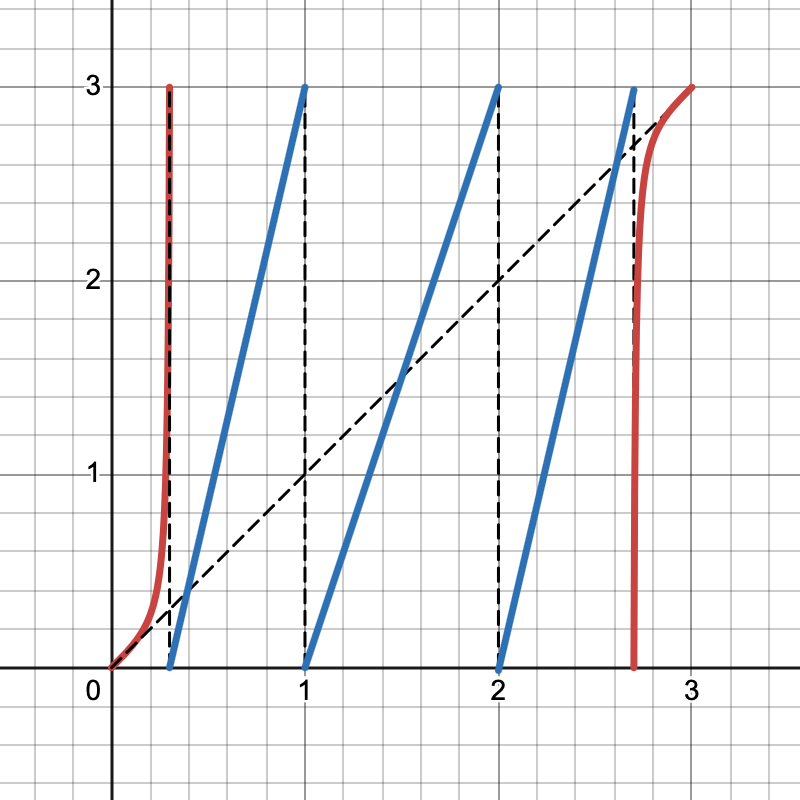}
%		\caption{\footnotesize $ x\mapsto \tan x $}
		
	\end{subfigure}
	\hfill
	\caption{{\footnotesize On the left, Boole's map (where a change of coordinates is applied to bring it to a finite interval). Such a map has two branches, both containing a parabolic point (at 0 and at 1). On the right, a AFN-system composed by five branches, two of them with a parabolic point (at 0 and at 3) --in red--, and the others hyperbolic --in blue--.}}\label{fig-AFN}
\end{figure}

{It is well-known that, if there exist $p_l \geq 1$, which is true in our situation, such maps do not preserve any finite measure absolutely continuous with respect to $ \lambda $. However, they do preserve an infinite $ \sigma $-finite measure absolutely continuous with respect to $ \lambda $. We denote such a measure by $ \mu $, which is unique (by the ergodicity of $ \lambda $)}.

The following three general results come from \cite{ThalerZweimuller}. As we will use them in a very specific situation, we do not provide all the definitions here, in particular, of the distributions $\mathcal{M}_\alpha$,  $\mathcal{L}_{\alpha,\beta}$ and $\mathcal{Z}_\alpha$ -- if necessary those can be found in the cited paper. However, under each result we will state how this translates into our setting. 

For any AFN-system, we can associate three parameters, defined below.
\begin{itemize}
	\item The parameter $p$ describes the behaviour near the parabolic point. Indeed, $p\coloneqq \max\{p_l : I_{k_l}\in \mathcal{J}\}$.
	\item The parameter $\alpha \coloneqq 1/p$. 
%	\item \anna{who is $ \beta $???- I guess $$	\beta \coloneqq \frac{\sum_{Z \in \eta,\; p_Z = p} h_0(Z)\,|a_Z|^{-\alpha}}	{\sum_{Z \in \mathcal{J},\; p_Z = p} h_0(Z)\,|a_Z|^{-\alpha}} $$ Since this depends on $ \eta $, maybe we can say that "$ \beta $ is a parameter which will be fixed later". Although then is strange to write below $ \beta=1/2 $ }
	%	\luke{ Yes, $\beta$ is defined as you say. I'm unsure how to write this best. How about this:}
	\item The parameter $\beta$ which describes the behaviour at a parabolic point with respect to all the other parabolic points. The -- rather technical -- definition is in Theorem \ref{thm:2.9}. Suffice it to say that in our situation $\beta=1/2$, basically because we have two parabolic points with the same Taylor expansion around them. 
\end{itemize}
Notably,  for our systems the three parameters used below are $p=2$, $\alpha =1/2 = \beta$ (cf. Section~\ref{sect-dpinnerfunctions} and Lemma~\ref{lem:innerAFN} -- where we prove that our system is indeed AFN).

The first result gives a weak limit version of the Ergodic Theorem, {i.e. of the asymptotic distribution of
 \[ S_nf(x)=\sum_{k=0}^{n} f\circ T^k(x).\]}
\begin{thm}{\bf (Darling--Kac theorem for AFN maps, {\normalfont\cite[Corol. 8.1]{ThalerZweimuller}})}
	Let $(X,T,\mathcal{I})$ be a basic AFN-system, and let $ E $ be a sweep-out set. Then
	\[
	\frac{1}{a_n} S_n(f) \stackrel{\mathcal{L}(\mu)}{\Longrightarrow}\int f \,d\mu\cdot \mathcal{M}_\alpha
	\quad \text{for all } f \in L^1(\mu) \text{ s.t. } \int f \,d\mu \neq 0,
	\]
	where
	\[
	a_n \sim \frac{1}{\Gamma(1+\alpha)\Gamma(2-\alpha)} \cdot \frac{n}{w_n(E)},
	\qquad n \ge 1.
	\]
\end{thm}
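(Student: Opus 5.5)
This statement is quoted from \cite[Corol.~8.1]{ThalerZweimuller}, and in the paper it is applied rather than reproved. If I had to prove it, I would follow the standard route in infinite ergodic theory. The first step is to reduce to Aaronson's abstract Darling--Kac theorem, which applies to a conservative ergodic map that is \emph{pointwise dual ergodic} and has a \emph{uniform set} $Y$. That theorem asserts that if the return sequence $a_n$ is regularly varying of index $\alpha\in[0,1]$, then $S_n(f)/a_n \stackrel{\mathcal{L}(\mu)}{\Longrightarrow}\int f\,d\mu\cdot\mathcal{M}_\alpha$. The work therefore splits into three tasks. First, construct a good set $Y$ for the AFN-system. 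Second, identify the asymptotics of $w_n(Y)$, and hence of $a_n$. Third, pass from $Y$ to an arbitrary sweep-out set $E$ and an arbitrary $f\in L^1(\mu)$ with nonzero integral.

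For the first task I would take $Y=X\smallsetminus\bigcup_l U_l$, where $U_l$ is a small one-sided neighbourhood of the parabolic point $x_l$. I would then study the first return map $T_Y$. Adler's condition together with the finite image condition should make $T_Y$ a Gibbs--Markov (Rychlik-type) map with bounded distortion. Condition (v) supplies expansion away from the parabolic points. Condition (iii) supplies monotone distortion control along the excursions into $U_l$. With bounded distortion in hand, the transfer operator $\widehat T$ satisfies
\[
\sum_{k=0}^{n-1}\widehat T^k\mathbbm 1_Y \sim a_n \quad\text{uniformly on } Y.
\]
I would derive this from the renewal equation $\widehat T^n\mathbbm 1_Y=\sum_k \widehat T_Y$-type decompositions by first return time, combined with uniform spectral or distortion bounds for $T_Y$. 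This is exactly uniformity of $Y$.

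For the second task I would use the local form (iv), $T(x)=x+a_l|x-x_l|^{1+p_l}+\dots$. The points of $U_l$ that stay in $U_l$ for $n$ steps lie within distance $\sim n^{-1/p_l}$ of $x_l$. The invariant density behaves like $|x-x_l|^{-p_l}$ there, so $\mu(Y\cap\{\tau_Y>n\})$ is regularly varying of index $-\alpha$, with $\alpha=1/p$; only the branches with maximal $p_l$ contribute. Summing over $k<n$ then shows that $w_n(Y)$ is regularly varying of index $1-\alpha$. Applying Karamata's Tauberian theorem to the renewal identity linking the generating functions of $\mu(Y\cap\{\tau_Y>k\})$ and of $\mu(Y\cap T^{-k}Y)$ gives
\[
a_n\sim\frac{1}{\Gamma(1+\alpha)\Gamma(2-\alpha)}\,\frac{n}{w_n(Y)}.
\]
The wandering rate is asymptotically independent of the choice of sweep-out set for such maps, so the same holds with $E$ in place of $Y$.

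For the third task I would first obtain the distributional limit for $f=\mathbbm 1_Y$ by the method of moments. Uniformity of $Y$ lets one compute $\int_Y (S_n\mathbbm 1_Y)^k\,d\mu\sim k!\,a_n^k/\Gamma(1+k\alpha)\cdot\Gamma(1+\alpha)^k$ by iterating the transfer operator, and these are the Mittag-Leffler moments, which determine the law. I would then pass to general $f$ using the Hopf Ergodic Theorem (Theorem~\ref{hopf-ergodic-thm}), since $S_nf/S_n\mathbbm 1_Y\to\int f\,d\mu/\mu(Y)$ almost everywhere. Finally, I would upgrade from $\mu|_Y$ (normalized) to every probability $\nu\ll\mu$ by Zweimüller's version of Eagleson's theorem. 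That theorem requires asymptotic invariance, $S_n\mathbbm 1_Y\circ T-S_n\mathbbm 1_Y=O(1)=o(a_n)$, which is immediate.

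I expect the main obstacle to be the uniform control of $\widehat T^n$ on $Y$, that is, proving that $Y$ is a uniform (Darling--Kac) set. This requires a renewal-theoretic analysis with infinite-mean return times, and the distortion estimates near the indifferent points must be uniform in the length of the excursion. The first thing I would try is to prove a Gibbs--Markov property for $T_Y$ and then invoke known renewal results for such induced maps.
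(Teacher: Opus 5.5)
The paper gives no proof of this statement; it only cites \cite[Corol.~8.1]{ThalerZweimuller}. Your outline is the standard route to that corollary: Aaronson's Darling--Kac theorem for a pointwise dual ergodic map with a uniform set, the wandering-rate asymptotics from condition (iv) plus Karamata, then Hopf's ratio theorem and an Eagleson-type upgrade. The step that fails is the end of your second task. The wandering rate $w_n(E)=\mu\bigl(\bigcup_{k<n}T^{-k}E\bigr)$ is \emph{not} asymptotically independent of the sweep-out set.

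For a counterexample, take Boole's map $T(x)=x-\frac1x$ with its invariant Lebesgue measure $\mu$; it is an AFN-system after the conjugacy of Lemma~\ref{lem:innerAFN}. Let $E=[-1,1]\cup E'$, where, for each $m\geq 2$, $E'$ contains about $2^m/m^2$ intervals of length $\asymp 2^{-m}$ in $[2^m,2^{m+1}]$, pairwise at distance $\gtrsim m^2$. Then $\mu(E)\leq 2+\sum_m m^{-2}<\infty$, so $E$ is a sweep-out set. Suppose $2^m\geq n$. Near $[2^m,2^{m+1}]$ orbits move left by $\asymp 2^{-m}$ per step and by at most $2$ in $n$ steps. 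So each of these intervals is entered within $n$ steps by a set of measure $\asymp n2^{-m}$, and these sets are pairwise disjoint. Hence
\[
w_n(E)\gtrsim\sum_{m\geq \log_2 n}\frac{n}{m^2}\asymp\frac{n}{\log n}.
\]
On the other hand, $w_n([-1,1])\asymp\sqrt n$ and the return sequence of $T$ is $a_n\asymp\sqrt n$. So $n/w_n(E)\lesssim\log n$ is not asymptotic to any multiple of $a_n$. The identification $a_n\sim\frac{1}{\Gamma(1+\alpha)\Gamma(2-\alpha)}\frac{n}{w_n(E)}$ therefore cannot be proved for an arbitrary sweep-out set. The statement as quoted is too general on this point, so no argument can close your step as written.

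What is true, and all the paper uses, is the formula for uniform reference sets, such as sets bounded away from the indifferent fixed points. Remark~\ref{cor:11} applies it to compact intervals of $\mathbb{R}$. For such a set you should run your renewal/Karamata computation on $E$ itself rather than appeal to universality of wandering rates. The convergence $S_n(f)/a_n\Rightarrow\int f\,d\mu\cdot\mathcal{M}_\alpha$ is unaffected, since $a_n=a_n(T)$ is intrinsic to $T$. Only its expression through $w_n(E)$ depends on the set.

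Two smaller points:
\begin{itemize}
\item For a non-Markovian AFN-system the induced map $T_Y$ is not Gibbs--Markov. It is an AFU map (Rychlik class), and that is the source of the distortion and spectral control you need.
\item The tail $\mu(Y\cap\{\tau_Y>n\})\asymp n^{-\alpha}$ should be computed via Lemma~\ref{lemma-zweimuller}, as the measure of the single layer $Y^c\cap\{\tau_Y=n\}$ near $x_l$ (length $\asymp n^{-1-\alpha}$, density $\asymp n$). Integrating the density $|x-x_l|^{-p}$ over an $n^{-\alpha}$-neighbourhood of $x_l$, as your wording suggests, gives $+\infty$.
\end{itemize}
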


\begin{rema} \label{cor:11}
For $\alpha = 1/2$ the Mittag-Leffler distribution $\mathcal{M}_\alpha$ becomes $|\mathcal{N}(0,1)|$ (the absolute value of the standard normal distribution) with the density function 
\[g(x) = \frac{2}{\pi}e^{-\frac{x^2}{\pi}}\;\chi_{x>0}.\] And $a_n \sim \frac{4}{\pi}\cdot \frac{n}{ w_n(Y)}$. {And by the calculation from the proof of Theorem \ref{thm-a-in} we have that for any compact interval $E$
\[w_n(E)\sim\sum_{k=0}^{n-1} \frac{1}{\sqrt{k}}\sim \sqrt{n}. \]}
\end{rema}

The second law tells us about the number of returns into a neighbourhood of a parabolic point. {Stating it requires more notation (for formal definitions we again refer to \cite{ThalerZweimuller}). Let us abuse the notation slightly and denote the set of parabolic points as $\mathcal{J}$ (previously $\mathcal{J}$ denoted the set of parabolic intervals). Let $\mathcal{K}$ be a proper subset of $\mathcal{J}$. For every parabolic point $ x_l\in \mathcal{J} $, which is the endpoint of the interval $I_{k_l} $, we call the set $I_{k_l} \cap T^{-1}( I_{k_l} )$ the \emph{full neighbourhood} of $x_l$. Finally, denote by $A_\mathcal{K}$ the union of full neighbourhoods of points from $\mathcal{K}$.}

	Also, denote by $w_N(E,A)$ the wandering rate of $E\cap T^{-1}(A)$, and by $h_0$ the density of $\mu$ w.r.t. $\lambda$. And finally $a_{x_l}$ comes from assumption (iv) in the definition of AFN.

\begin{thm}{\bf (Arcsine law for neighbourhoods of neutral fixed points, {\normalfont\cite[Corol. 8.2]{ThalerZweimuller}})}\label{thm:2.9}
	Let $(X,T,\mathcal{I})$ be a basic AFN-system, and let $ E $ be a sweep-out set. Suppose that $\emptyset \neq \mathcal{K} \subseteq \mathcal{J}$.
	Then
	\[	\frac{w_N(E,A_\mathcal{K})}{w_N(E)}	\longrightarrow	\beta \coloneqq \frac{\sum_{x_l \in \mathcal{K},\; p_{x_l} = p} h_0(x_l)\,|a_{x_l}|^{-\alpha}}	{\sum_{x_l \in \mathcal{J},\; p_{x_l} = p} h_0(x_l)\,|a_{x_l}|^{-\alpha}}	\in [0,1]	\quad \text{as } N \to \infty,	\]
	and
	\[	\frac{1}{n} S_n(\mathbbm{1}_A)	\stackrel{\mathcal{L}(\mu)}{\Longrightarrow}	\mathcal{L}_{\alpha,\beta}	\]
	for all $A \in \mathcal{B}$ with $\mu(A \triangle A_\mathcal{K}) < \infty$.
\end{thm}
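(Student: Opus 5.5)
The plan follows the strategy of Thaler and Zweimüller: first reduce everything to the first return map on a suitable sweep-out set $E$ bounded away from the parabolic points, and then apply a Lamperti-type renewal argument. Strong distributional convergence lets us fix one convenient $E$, namely a finite union of intervals $Y$ bounded away from $\{x_l\}$. Two reductions make this legitimate. First, replacing $A$ by $A_{\mathcal K}$ changes $S_n(\mathbbm 1_A)$ by $S_n(\mathbbm 1_{A\triangle A_{\mathcal K}})$, and dividing this by $n$ tends to $0$ almost everywhere: by Hopf's theorem (Theorem \ref{hopf-ergodic-thm}) it is comparable to $S_n(\mathbbm 1_Y)$, and $S_n(\mathbbm 1_Y)=o(n)$ because $\mu$ is infinite. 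Second, $\frac1n S_n(\mathbbm 1_A)\circ T - \frac1n S_n(\mathbbm 1_A)\to 0$. By Zweimüller's criterion, this means convergence for a single probability $\nu\ll\mu$ already gives $\stackrel{\mathcal L(\mu)}{\Longrightarrow}$.

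\textbf{Step 1: the ratio of wandering rates.} For each parabolic point $x_l$, let $c^{(l)}_n$ be the sequence of preimages inside $I_{k_l}$, with $T c^{(l)}_{n+1}=c^{(l)}_n$. Condition (iv) gives
\[ |c^{(l)}_n-x_l|\sim (a_l p_l n)^{-1/p_l}. \]
Near $x_l$, the density $h_0$ behaves like $h_0(x_l)\,|x-x_l|^{-p_l}$; here $h_0(x_l)$ denotes the regularised value at $x_l$, which exists by Adler's condition and the bounded-distortion estimates. By Lemma \ref{lemma-zweimuller},
\[ \mu(Y\cap\{\tau_Y>n\})=\mu(Y^c\cap\{\tau_Y=n\}). \]
Up to an error from finitely many uniformly expanding branches, the right-hand side is the sum over $l$ of $\mu\bigl([c^{(l)}_{n+1},c^{(l)}_n]\bigr)$. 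Integrating the density over these intervals, the contribution of $x_l$ is asymptotic to a constant times $h_0(x_l)\,a_l^{-\alpha} n^{-\alpha}$ when $p_l=p$, and is of smaller order otherwise. Summing over $k<n$ shows that $w_n(Y)$ is regularly varying of index $1-\alpha$. Restricting the same computation to excursions entering $A_{\mathcal K}$ shows that $w_N(Y,A_{\mathcal K})/w_N(Y)\to\beta$, which is the first claim. The passage from $Y$ to an arbitrary sweep-out set $E$ follows from the standard fact that the asymptotics of wandering rates do not depend on the choice of uniform set.

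\textbf{Step 2: the arcsine-type law.} Decompose the orbit into excursions from $Y$. The first return map $T_Y$ is Gibbs--Markov, by the finite image condition and Adler's condition. The return time $\tau_Y$, together with the label recording whether the excursion visits $A_{\mathcal K}$, is then measurable with respect to the first partition element. By Step 1, the tails of $\tau_Y$ are regularly varying of index $\alpha$, and a proportion $\beta$ of the tail mass comes from excursions into $A_{\mathcal K}$. Moreover, on such an excursion essentially the whole excursion time is spent in $A_{\mathcal K}$. Hence $S_n(\mathbbm 1_{A_{\mathcal K}})$ is, up to $o(n)$, the total length of the $A_{\mathcal K}$-excursions completed before time $n$. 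Lamperti's theorem for two-state renewal-type processes then yields the limit $\mathcal L_{\alpha,\beta}$, provided the excursions behave asymptotically like an i.i.d.\ sequence. To justify this I would use the transfer-operator (Laplace transform) approach. The exponential mixing of $T_Y$ gives a spectral gap for the twisted operators $\widehat T_Y(s)$, with the twist $e^{-s\tau_Y}$ on non-$A_{\mathcal K}$ excursions and $e^{-(s+t)\tau_Y}$ on $A_{\mathcal K}$ excursions. Their leading eigenvalues $1-\lambda(s,t)$ satisfy a Tauberian expansion matching the i.i.d.\ case, namely
\[ \lambda(s,t)\sim \Gamma(1-\alpha)\bigl(\beta(s+t)^\alpha+(1-\beta)s^\alpha\bigr)\cdot(\text{slowly varying factor}). \]
This double Laplace transform identifies $\mathcal L_{\alpha,\beta}$.

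\textbf{Main obstacle.} I expect the hardest step to be this last one: controlling the perturbed transfer operators uniformly in the two Laplace parameters, and then inverting the double Laplace transform to obtain convergence in distribution rather than only convergence of transforms. I would first try to reduce it to Lamperti's generating-function criterion, using a Karamata Tauberian argument.
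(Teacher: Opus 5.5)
\paragraph{Step 2 discards the excursion in progress.} The paper does not prove this statement; it quotes Thaler--Zweimüller, Corollary~8.2. That result combines their abstract arcsine law for sets dynamically separated by a \emph{uniform} reference set $Y$ (proved by a renewal analysis of Laplace transforms, with no spectral gap for an induced map) with Zweimüller's AFN estimates. Your preliminary reductions are fine. Your skeleton also matches theirs: take $Y$ to be the complement of the full neighbourhoods, get the tails of $\tau_Y$ from the local expansion, then run a Lamperti-type renewal argument. However, Step 2 contains a genuine error. It is not true that $S_n(\mathbbm 1_{A_{\mathcal K}})$ equals, up to $o(n)$, the total length of the $A_{\mathcal K}$-excursions \emph{completed} before time $n$. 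The difference is the elapsed part $n-Z_n$ of the excursion in progress. For $\alpha<1$ this is of order $n$, because $Z_n/n$ has the nondegenerate Dynkin--Lamperti limit. This straddling excursion is exactly what produces the arcsine form. Decompose at the last visit to $Y$, with $t$ proportional to $s\to0$. The double transform $\sum_n e^{-sn}\int e^{-tS_n(\mathbbm 1_{A_{\mathcal K}})}\,d\nu$ is then the renewal sum $\sum_k \widehat T_Y(s,t)^k$, which contributes $1/\lambda(s,t)$, applied against the tail factor
\[
\sum_{j\ge 0}e^{-sj}\int_{Y\cap\{\tau_Y>j\}}e^{-tj\,\mathbbm 1_{T^{-1}A_{\mathcal K}}}\,d\mu\;\sim\;\Gamma(1-\alpha)\,\ell(1/s)\,\bigl(\beta(s+t)^{\alpha-1}+(1-\beta)s^{\alpha-1}\bigr).
\]
What characterises $\mathcal L_{\alpha,\beta}$ is the quotient $\frac{\beta(s+t)^{\alpha-1}+(1-\beta)s^{\alpha-1}}{\beta(s+t)^{\alpha}+(1-\beta)s^{\alpha}}$. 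Your eigenvalue expansion gives only the denominator. If you keep only completed excursions, the numerator becomes $s^{\alpha-1}$ and you obtain a different law. The tail factor needs the non-averaged ratio $\mu(Y\cap T^{-1}A_{\mathcal K}\cap\{\tau_Y>n\})\sim\beta\,\mu(Y\cap\{\tau_Y>n\})$, which your Step 1 does supply.

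\paragraph{The transfer to arbitrary $E$ fails.} The last sentence of Step 1 does not work. Wandering rates are asymptotically set-independent only within the class of uniform (Darling--Kac) sets, and an arbitrary sweep-out set need not be uniform. For example, adjoin to $Y$ the fundamental domains $[c^{(l)}_{m+1},c^{(l)}_m]$, $m=2^k$, of one channel with $x_l\in\mathcal K$. The resulting set has finite measure. Its wandering rate, however, exceeds $w_N(Y)$ by an amount $\gtrsim N^{1-\alpha}$, all of it inside that channel. So for $\alpha<1$ and $\beta<1$ the proportion attributable to $A_{\mathcal K}$ no longer tends to $\beta$. Moreover, $w_N(E,A_{\mathcal K})$ only has its intended meaning when $E$ dynamically separates $A_{\mathcal K}$ from the other channels. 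The first assertion must therefore be read, as in Thaler--Zweimüller, with $E=Y$. The arcsine law itself does not involve $E$, so it is unaffected.

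\paragraph{Two smaller points.} First, a basic AFN-system need not be Markov. In general $T_Y$ is only an AFU map (Adler, finite images, uniformly expanding), not Gibbs--Markov. Your perturbative argument should therefore be run on BV using Rychlik's spectral gap. There the twists are harmless, since $\tau_Y$ and $\mathbbm 1_{T^{-1}A_{\mathcal K}}$ are constant on branches of $T_Y$. Second, the weights $h_0(x_l)|a_{x_l}|^{-\alpha}$ require the normalisation $h=h_0\cdot G$ with $G(x)\sim|a_l|^{-1}|x-x_l|^{-p_l}$. With $h\sim h_0(x_l)|x-x_l|^{-p_l}$, as you wrote, the same integral gives weights $h_0(x_l)|a_l|^{1-\alpha}$.
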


\begin{rema} \label{cor:12}
In our situation $\mathcal{J}$ contains only two elements. Thus, we will apply Theorem \ref{thm:2.9}  for $\mathcal{K}$ equal to only one of those elements. Both $h_0(x_l)$ and $a_{x_l}$ are equal at those two points (because those two parabolic points come from one \emph{split} point), so $\beta=\frac{1}{2}$. The distribution $\mathcal{L}_{\frac12,\frac12}$ is the \emph{arcsine distribution} with the cumulative distribution function given by 
\[f(t) = \frac{2}{\pi}\arcsin{\sqrt{t}}. \]
{Finally, with the notation to be introduced in Section \ref{subsection-dp-around-infty}, the set $A_\mathcal{K}$ by  definition is either equal to $(p_2^+,+\infty)$ or $(-\infty,p_2^-)$ (see Figure \ref{fig-boolep}).}
\end{rema}

We define $Z_nY(x)$ to be the time of the last visit of the orbit of $x$ to $Y$ before time $n$. The third theorem gives the behaviour of this quantity. %\anna{If $ E $ is going to be $ Y $ in the theorem, maybe we can call it $ Y $ from the begining}

%\luke{Right, let's put $E$ instead of $Y$. I think it's easier to change it here in the theorems than later in the proofs? }

\begin{thm}{\bf (Dynkin-Lamperti law for AFN reference sets, {\normalfont\cite[Corol. 8.3]{ThalerZweimuller}})}
	Let $(X,T,\mathcal{I})$ be a basic AFN-system, and let $ E $ be a sweep-out set. Then
	\[
	\frac{1}{n} Z_n(E)
	\stackrel{\mathcal{L}(\mu)}{\Longrightarrow}
	\mathcal{Z}_\alpha.
	\]
\end{thm}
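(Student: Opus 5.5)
The plan is to obtain the statement as a special case of the abstract Dynkin--Lamperti arcsine law for pointwise dual ergodic transformations. That law says: if $T$ is conservative, ergodic and measure preserving, $Y$ is a uniform (Darling--Kac) set, and the wandering rate $w_n(Y)$ is regularly varying of index $1-\alpha$ with $\alpha\in(0,1]$, then $\frac1n Z_n(Y)\stackrel{\mathcal{L}(\mu)}{\Longrightarrow}\mathcal{Z}_\alpha$. So the work splits into four parts: produce a good reference set $Y$ for a basic AFN-system, compute its wandering rate, prove the law for $Y$, and transfer it to an arbitrary sweep-out set $E$.

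\emph{Step 1 (a uniform set).} Take $Y$ to be a finite union of intervals of $\mathcal{I}$ bounded away from the parabolic points $x_l$. By Adler's condition and the finite image condition, the first-return map to $Y$ is a Gibbs--Markov type map with bounded distortion. Hence its transfer operator acting on densities of bounded variation is quasi-compact, which gives $\frac{1}{a_n}\sum_{k<n}\widehat T^k u\to\int u\,d\mu$ uniformly on $Y$ for suitable densities $u$. This makes $Y$ a uniform set and $T$ pointwise dual ergodic.

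\emph{Step 2 (wandering rate).} Use conditions (iii)--(iv) near each $x_l$.
\begin{itemize}
\item The inverse branch orbit $y_n\to x_l$ satisfies $|y_n-x_l|\sim (a_l p_l n)^{-1/p_l}$.
\item The invariant density satisfies $h_0(x)\asymp |x-x_l|^{-p_l}$.
\end{itemize}
Together these give $\mu(Y\cap\{\tau_Y>n\})$ regularly varying of index $-\alpha$ with $\alpha=1/p$. Summing, $w_n(Y)$ is regularly varying of index $1-\alpha$; in our setting this is Remark \ref{cor:11}, $w_n\sim\sqrt n$.

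\emph{Step 3 (the law for $Y$).} Start from the decomposition $\{Z_n(Y)=k\}=T^{-k}\bigl(Y\cap\{\tau_Y>n-k\}\bigr)$. For $\nu=u\,d\mu$ with $u$ a nice density supported in $Y$, this gives
\[\nu(Z_n\le tn)=\sum_{k\le tn}\int_{Y\cap\{\tau_Y>n-k\}}\widehat T^k u\,d\mu .\]
Step 1 supplies the needed asymptotics of $\widehat T^k u$ on $Y$, namely density $\approx$ the increment of $a_k$, so roughly $k^{\alpha-1}$. Combined with the tail asymptotics of Step 2, the sum becomes a Riemann sum converging to
\[c_\alpha\int_0^t s^{\alpha-1}(1-s)^{-\alpha}\,ds,\]
which is the distribution function of $\mathcal{Z}_\alpha$. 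One then passes to all probability measures $\nu\ll\mu$ by the standard Eagleson-type argument: $Z_n\circ T-Z_n=O(1)$ off a set of small measure, so convergence for one absolutely continuous $\nu$ gives it for all.

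\emph{Step 4 (arbitrary sweep-out $E$).} This is the step I expect to be the main obstacle, since $E$ need not be uniform and may even accumulate at the $x_l$. I would first try to show that $|Z_n(E)-Z_n(Y)|/n\to0$ in $\mu$-measure. The argument would be that by ergodicity and the Hopf ratio theorem (Theorem \ref{hopf-ergodic-thm}), visits to $E$ and to $Y$ interlace on scales that are $o(n)$: between the last visit to $Y$ and time $n$ the orbit sits in one long excursion, and $E$ intersects that excursion only with asymptotically vanishing relative frequency. If this comparison is too delicate, the fallback is to show directly that every sweep-out set contains a uniform subset with the same wandering rate asymptotics, and then rerun Step 3 for that subset.
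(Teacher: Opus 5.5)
Your Steps 1--3 follow the standard route: Thaler's Dynkin--Lamperti theorem for a uniform set whose wandering rate is regularly varying of index $1-\alpha$, which is how the cited corollary is obtained. The paper gives no proof and only cites \cite{ThalerZweimuller}.

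\textbf{The gap is Step 4, and it cannot be closed.} For an arbitrary sweep-out set the conclusion is false. The hypothesis has to be that $E$ is bounded away from the parabolic points, as for the reference sets in \cite{ThalerZweimuller}.

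\textbf{Counterexample.} Take Boole's map $T(x)=x-1/x$, which is an AFN-system by Lemma \ref{lem:innerAFN}, and let
\[E=\bigcup_{j\geq 2}\Bigl([j^2,j^2+2j^{-2}]\cup[-j^2-2j^{-2},-j^2]\Bigr).\]
Then $0<\mu(E)<\infty$, so $E$ is sweep-out.
\begin{itemize}
\item Since $T(x)^2=x^2-2+x^{-2}$, the step near $j^2$ is about $j^{-2}$, which is smaller than the length of the $j$-th interval. So every orbit descending past $j^2$ lands in $E$.
\item Descending from $j^2$ to $y$ takes about $(j^4-y^2)/2$ steps, and consecutive squares above $y$ are at most $2\sqrt y+1$ apart.
\item Suppose that at time $n$ the orbit is at height $y_n$, inside an excursion from $Y=[-1,1]$ that began at least $\varepsilon n$ steps earlier, at height $O(\sqrt n)$. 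Then a square lies between the starting height and $y_n$, and $n-Z_n(E)\lesssim y_n^{3/2}$.
\item Under $\nu$ the normalized Lebesgue measure on $[-1,1]$, one has $|T^n x|=O_P(\sqrt n)$. By the arcsine law for $Y$, the current excursion has lasted at least $\varepsilon n$ steps with probability close to $1$.
\end{itemize}
Hence $n-Z_n(E)=O_P(n^{3/4})$, so $\frac1n Z_n(E)\to 1$ in $\nu$-probability. Meanwhile $\frac1n Z_n(Y)$ converges to the arcsine law, so $|Z_n(E)-Z_n(Y)|/n\not\to 0$.

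\textbf{Why your heuristic fails.} Hopf's ratio theorem controls the relative frequency of visits. But $Z_n$ is determined by a single visit, and a set of finite measure can be met once, late, in every long excursion.

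\textbf{Why the fallback fails.} For $Y'\subset E$, the law of $Z_n(Y')$ says nothing about $Z_n(E)\geq Z_n(Y')$. Moreover, in the example $w_n(E)=\mu\bigl(\bigcup_{k<n}T^{-k}E\bigr)\gtrsim n^{2/3}$, which is not even asymptotic to $w_n(Y)\sim\sqrt n$.

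\textbf{What works.} The correct statement needs $\overline{E}$ to avoid the parabolic points. In the paper's applications $E$ is bounded in $\mathbb{R}$, i.e.\ away from $\pm\pi/2$, so this suffices. Such an $E$ is itself a uniform set with $w_n(E)\sim w_n(Y)$. So run Steps 1--3 directly on $E$ and drop Step 4.

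\textbf{Two smaller repairs.}
\begin{itemize}
\item In Step 1, for Boole's map both elements of $\mathcal{I}$ are parabolic, so no union of elements of $\mathcal{I}$ avoids the parabolic points. Take $Y$ to be the complement of small neighbourhoods of the $x_l$ instead.
\item Step 1 only gives Ces\`aro control of $\widehat T^k u$, not the pointwise size $k^{\alpha-1}$ you use in Step 3. Replace the Riemann-sum argument by Abel summation in $k$ (on $Y$, $\mathbbm{1}_{\{\tau_Y>n-k\}}$ is monotone in $k$), or by Thaler's double generating-function and Karamata argument.
\end{itemize}
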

\begin{rema} \label{cor:13}
For $\alpha=\frac{1}{2}$ the $Z_\alpha$ (defined as the $B(\alpha,1-\alpha)$ distribution) becomes the arcsine distribution.
\end{rema}

\begin{obs}
We would like to finish this section by observing that AFN maps are well studied and there are many more useful results concerning their statistical behaviour. For example, in \cite[Corollary 6.3.3.]{Bansard} the author proves that for $x$ not being a periodic point nor a point eventually hitting the boundary of intervals from $\mathcal{I}$ (in particular not one of the parabolic points) we have the following limit behaviour of the first entrance and first return time statistics:
\[\tau_{B(x,r)}\overset{\mathcal{L}(\mu)}{\underset{r \to 0}{\Longrightarrow}} H_{\frac{1}{2}}\Big(\Gamma\big(\frac{3}{2}\big)\Big)\mbox{\quad and \quad} \tau_{B(x,r)}\overset{\mu_{B(x,r)}}{\underset{r \to 0}{\Longrightarrow}} H_{\frac{1}{2}}\Big(\Gamma\big(\frac{3}{2}\big)\Big) \]

where $H_\alpha(\lambda)$ is a first type Mittag-Leffler law -- defined by its Laplace transform
\[\mathbb{E}\Big(e^{-sH_\alpha(\lambda)}\Big)=\frac{\lambda}{\lambda+s^\alpha}.\]
In fact the author gives the distribution for $n$-th entry/return time statistics. All of this is proved under additional assumptions of the system being \emph{mixing} and \emph{Markov}, both of which are trivially satisfied for the radial extension of one component doubly parabolic inner functions. For details, we refer to the aforementioned PhD thesis.

%\anna{question: all doubly parabolic inner functions are mixing and Markov? Or only one component? -- just out of curiosity}

%\luke{We definitely use the one-component part. This may be still true in general, but I don't think it's that trivial. If that'd be good to know, I can think on it.}
\end{obs}

\section{Doubly parabolic inner functions}\label{sect-dpinnerfunctions}

\subsection{Inner functions and one component inner functions}
\begin{defi}{\bf (Inner function)}
	A holomorphic self-map of the unit disk $ g\colon\mathbb{D}\to\mathbb{D} $ is an {\em inner function} if, for $ \lambda $-almost every point  $\xi\in \partial\mathbb{D} $, $$ g^*(\xi)\coloneqq\lim\limits_{t\to 1^-}g(t\xi)  \in\partial \mathbb{D} .$$ 
\end{defi}

 Given an inner function $ g $, we use the following notation. 
\begin{itemize}
	\item A point $ \xi\in\partial\mathbb{D} $ is called a {\em singularity} of $ g $ if $ g $ cannot be continued analytically to a neighbourhood of $ \xi $. Denote the set of all singularities of $ g $ by $ \Sigma $.
	\item We denote by $ \lambda $ the normalized Lebesgue measure on $ \partial \mathbb{D} $.
\end{itemize}

 Following Cohn \cite{Cohn} (see also \cite{ivrii2023innerfunctionscompositionoperators}), we define one component inner functions  as follows. 

\begin{defi}{\bf (One component inner function)}\label{def-ivriiurbanski-onecomp}
	An inner function $g\colon \DD\to \DD$ is a \emph{one component inner function} if for some $0<r<1$ the set $\{z\in \DD:|g(z)|<r\}$ is connected.
\end{defi}

An equivalent property, and one that is more interesting from a dynamical point of view, is that the singular values of the function 
$ g $ are compactly contained in $ \mathbb{D} $, as shown by the following result of Ivrii and Urba\'nski. 
\begin{thm}{\bf (Characterization of one component inner functions, {\normalfont \cite[Thm. 7.2]{ivrii2023innerfunctionscompositionoperators}})}\label{thm-ivriiurbanski-onecomp}
	Let $g \colon \mathbb{D} \to \mathbb{D}$ be an inner function. The following conditions
	are equivalent:
	\begin{enumerate}[label={\em (\alph*)}]
		\item The set $g^{-1}(B(0,r))$ is connected for some $0<r<1$.
		
		\item There is an annulus $A = A(0;\rho,1)$ such that
		$g \colon \mathbb{D} \to \mathbb{D}$ is a covering map over $A$.
		
		\item There is an annulus $\widetilde{A} = A(0;\rho,1/\rho)$ such that
		$g \colon \widehat{\mathbb{C}} \setminus \Sigma \to \widehat{\mathbb{C}}$
		is a covering map over $\widetilde{A}$.
		
		\item The singular set $\Sigma \subset \partial\mathbb{D}$ has Lebesgue measure $0$,
		the derivative $g'(\xi) \to \infty$ as $\xi \in \partial\mathbb{D} \setminus \Sigma$
		approaches $\Sigma$, and
		\[	\left| \frac{g''(\xi)}{g'(\xi)^2} \right| \le C,
		\qquad \xi \in \partial\mathbb{D} \smallsetminus \Sigma,\]
		for some constant $C>0$.
	\end{enumerate}
\end{thm}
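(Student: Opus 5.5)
The plan is to prove the cycle $\text{(a)}\Rightarrow\text{(c)}\Rightarrow\text{(b)}\Rightarrow\text{(a)}$ and then the equivalence $\text{(c)}\Leftrightarrow\text{(d)}$. Throughout I will use the reflection $g(z)=1/\overline{g(1/\bar z)}$, which extends $g$ meromorphically to $\widehat{\mathbb{C}}\smallsetminus\Sigma$. I will also use two standard facts about an inner function. First, at every singularity $\xi\in\Sigma$ one has $\liminf_{z\to\xi}|g(z)|=0$. Second, if $g(0)=0$, then Löwner's lemma gives $\lambda(g^{-1}(E))=\lambda(E)$ for Borel sets $E\subset\partial\mathbb{D}$; if $g(0)\neq0$, the same holds with harmonic measure in place of $\lambda$.

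For $\text{(a)}\Rightarrow\text{(c)}$, let $\Omega=g^{-1}(B(0,r))$ be connected and fix $\rho\in(r,1)$. I would show that $g$ has no critical values and no asymptotic values in $A(0;\rho,1)$; by reflection the same then holds in $A(0;1,1/\rho)$.

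\emph{Critical values.} Each component of $g^{-1}(B(0,s))$ with $s<1$ contains only finitely many critical points, by Schwarz--Pick applied on that component. Connectedness of $\Omega$ forces every component of $g^{-1}(B(0,s))$, for $s\ge r$, to contain $\Omega$. Hence there is a single such component, and the critical values of modulus $>r$ form a finite set. Shrinking the annulus towards the circle then removes them.

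\emph{Asymptotic values.} Suppose a curve $\gamma$ in $\mathbb{D}$ has $g(\gamma(t))\to w$ with $|w|>\rho$. Then $\gamma$ must land on $\partial\mathbb{D}$. If $\gamma$ lands at a point of $\partial\mathbb{D}\smallsetminus\Sigma$, then $g$ is analytic there, which is impossible. If it lands at $\xi\in\Sigma$, then $\xi$ is also accumulated by $\Omega$, because $\liminf|g|=0$ at $\xi$. Combined with the connectedness of $\Omega$ and a Lindelöf-type argument in the region between $\gamma$ and $\Omega$, this yields a contradiction. I regard this step as the \emph{main obstacle}. If the direct argument stalls, I would first try to show that each component of $g^{-1}(A(0;\rho,1))$ is a proper or universal covering, by exhausting it with preimages of closed sub-annuli.

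The implication $\text{(c)}\Rightarrow\text{(b)}$ is immediate by restricting to $\mathbb{D}$.

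For $\text{(b)}\Rightarrow\text{(a)}$, every component of $g^{-1}(A(0;\rho,1))$ covers the annulus, so it is either a round-like annulus of finite degree or a simply connected universal cover (a ``strip''). Take $r\in(\rho,1)$ and suppose $g^{-1}(B(0,r))$ had two components. Some component $V$ of the preimage of the annulus would then separate them.
\begin{itemize}
\item If $V$ is an annulus compactly contained in $\mathbb{D}$, its outer boundary would be mapped onto $|w|=1$, which is impossible inside $\mathbb{D}$.
\item If $V$ is a strip, both of its ends land on $\partial\mathbb{D}$, and the outer side of $V$ again maps to $\partial\mathbb{D}$. Hence $V$ cannot separate two pieces of $g^{-1}(B(0,r))$ inside $\mathbb{D}$.
\end{itemize}

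For $\text{(c)}\Rightarrow\text{(d)}$, the covering property gives, for every $\zeta\in\partial\mathbb{D}\smallsetminus\Sigma$, a univalent branch $F$ of $g^{-1}$ on the disk $D(g(\zeta),\delta)$ with $\delta=\min(1-\rho,1/\rho-1)$. Koebe distortion gives $|F''/F'|\le 4/\delta$ at the centre, and since $F''/F'^{\,}$ at $g(\zeta)$ equals $-g''(\zeta)/g'(\zeta)^2$, this is the bound $|g''/g'^2|\le C$.
\begin{itemize}
\item \emph{Blow-up of the derivative.} The images $F(D(g(\zeta),\delta))$ for distinct preimages are pairwise disjoint. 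Near $\Sigma$ infinitely many of them accumulate, so their diameters tend to $0$. By the Koebe $1/4$ theorem, $|F'|\to0$, that is, $|g'(\zeta)|\to\infty$ as $\zeta\to\Sigma$.
\item \emph{$\lambda(\Sigma)=0$.} The set $\partial\mathbb{D}\smallsetminus\Sigma$ is tiled by arcs mapped diffeomorphically onto $\partial\mathbb{D}$. The invariance of harmonic measure (Löwner) shows these arcs carry full measure, since $g^*(\xi)\in\partial\mathbb{D}$ a.e. and $g^{-1}(\partial\mathbb{D})\cap\Sigma$ contributes nothing.
\end{itemize}

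Conversely, for $\text{(d)}\Rightarrow\text{(c)}$, the Adler-type bound $|g''/g'^2|\le C$ integrates along $\partial\mathbb{D}\smallsetminus\Sigma$ to show that the local inverse branches extend univalently to disks of a uniform radius around each point of $\partial\mathbb{D}$. The divergence $g'\to\infty$ at $\Sigma$ guarantees that these branches never reach $\Sigma$. Together with $\lambda(\Sigma)=0$, so that every point of $\partial\mathbb{D}$ is covered, this gives a covering over a thin annulus $A(0;\rho,1/\rho)$, which is (c).
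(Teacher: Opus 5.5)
The paper does not prove this statement; it quotes it from \cite[Thm.~7.2]{ivrii2023innerfunctionscompositionoperators}. Your proposal therefore has to stand on its own.

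The easy parts are fine. (c)$\Rightarrow$(b) is immediate, and the Koebe bound $|F''/F'|\le 4/\delta$ does give $|g''/g'^2|\le C$. Your (b)$\Rightarrow$(a) is right in spirit, though ``separates'' needs making precise. For $t\in(\rho,1)$, each component $V$ of $g^{-1}(A)$ meets $\{|g|<t\}$ in a connected set. A path in $\mathbb{D}$ meets only finitely many of the relatively closed sets $V\cap\{|g|\ge t\}$, and each can be bypassed inside $V\cap\{|g|<t\}$.

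The real content of the theorem, (a)$\Rightarrow$(c), is not established.

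\begin{itemize}
\item \emph{Critical values.} Your argument rests on a false claim: a component of $\{|g|<s\}$ can contain infinitely many critical points. Take $S(z)=\exp(-\frac{1+z}{1-z})$, $B(z)=(\frac{z-a}{1-\bar a z})^2$ with $0<|a|<1$, and $g=B\circ S$. For $|a|^2<s<1$ the set $\{|g|<s\}=S^{-1}(\{|B|<s\})$ is connected, because $\{|B|<s\}\smallsetminus\{0\}$ contains a loop around $0$ and $S$ is a universal covering of $\mathbb{D}\smallsetminus\{0\}$. Yet this set contains all of $S^{-1}(a)$. Schwarz--Pick gives no finiteness of this kind. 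Even finiteness for each fixed $s<1$ would not stop critical values accumulating on $\partial\mathbb{D}$, so ``shrinking the annulus'' does not follow.
\item \emph{What works here} is the maximum principle. First, $\{|g|<t\}$ is connected for every $t\in[r,1)$; this uses the classical fact, which you rely on implicitly, that every component of a sublevel set of an inner function has $\inf|g|=0$. Now suppose $c$ is a critical point with $|g(c)|=t\ge r$. Join two adjacent sectors of $\{|g|<t\}$ at $c$ by a path inside $\{|g|<t\}$. This gives a Jordan curve in $\mathbb{D}$ with $|g|\le t$ on it that encloses points where $|g|>t$, contradicting the maximum principle. Hence all critical values lie in $B(0,r)$.
\item \emph{Asymptotic values.} This step, which you flag yourself, is simply missing. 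It is not a routine Lindel\"of argument: you must exclude every failure of path lifting over small discs in the annulus, for arbitrary and not only logarithmic singularities.
\item \emph{The unit circle.} Even with no singular values in $A(0;\rho,1)$ and, by reflection, none in $A(0;1,1/\rho)$, you do not yet have a covering over $A(0;\rho,1/\rho)$. You must also exclude asymptotic values on $|w|=1$ along paths in $\widehat{\mathbb{C}}\smallsetminus\Sigma$ tending to $\Sigma$. An example is $g$ having a limit along an arc of $\partial\mathbb{D}\smallsetminus\Sigma$ at an endpoint in $\Sigma$. Reflection does not rule this out.
\end{itemize}

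The converse direction also has gaps.

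\begin{itemize}
\item \emph{$\lambda(\Sigma)=0$ in (c)$\Rightarrow$(d).} This is asserted, not proved. The phrase ``$g^{-1}(\partial\mathbb{D})\cap\Sigma$ contributes nothing'' is exactly what must be shown, and L\"owner's lemma does not yield it. A correct argument: suppose $\xi\in\Sigma$ has a radial limit $w_0\in\partial\mathbb{D}$. The tail of the radius lies in a single sheet $W$ over $D(w_0,\delta)$. The univalent inverse $F\colon D(w_0,\delta)\to W$ is continuous at $w_0$, so $\xi=F(w_0)\in W\subset\widehat{\mathbb{C}}\smallsetminus\Sigma$, a contradiction. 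Thus $g^*$ is unimodular at no point of $\Sigma$, and $\lambda(\Sigma)=0$ because $g$ is inner.
\item \emph{Blow-up of $g'$.} The image $F(D(g(\zeta),\delta))$ avoids $\Sigma$, and by Koebe's $1/4$ theorem it contains $D(\zeta,\delta/(4|g'(\zeta)|))$. Hence $|g'(\zeta)|\ge\delta/(4\,\mathrm{dist}(\zeta,\Sigma))$. Your disjointness argument compares branches over different centres and does not give this.
\item \emph{(d)$\Rightarrow$(c).} This is only a sketch. Integrating $|g''/g'^2|\le C$ along $\partial\mathbb{D}$ controls the inverse branches on the circle only. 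It does not give univalence on planar discs of radius comparable to $1/|g'|$. Nor does it show that every $z\in\mathbb{D}$ near $\Sigma$ with $|g(z)|$ close to $1$ lies in such a sheet. You need genuinely two-dimensional input, for instance the Herglotz representation of $i\frac{1+g}{1-g}$, whose derivative is an integral against a positive measure and so controls $g$ off the circle. That must then be followed by a path-lifting argument.
\end{itemize}
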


\subsection{Doubly parabolic inner functions}
We now address inner functions that do not have a fixed point in $ \DD $. In particular, we focus on doubly parabolic  inner functions, defined as follows. 

\begin{defi}{\bf (Doubly parabolic inner function)}\label{def:dblpar}
	An inner function $g\colon \DD\to \DD$ is a \emph{doubly parabolic inner function} if $g$ is holomorphic in
	a neighbourhood of the Denjoy-Wolff point $p\in \partial \DD$,
	\[g(z) = p+ c(z-p)^3 + \ldots, \qquad c\neq 0,\] 
	near $p$.
\end{defi}

Note that, by definition, the Denjoy-Wolff of a doubly parabolic inner function is not a singularity for $g$ (this is not standard in the literature, but it is the approach followed in \cite{ivrii2023innerfunctionscompositionoperators}).

As above, we denote by $\Sigma$ the set of singularities of $ g $ and by $\lambda$ the Lebesgue measure on $\partial\mathbb{D}$. Moreover:\begin{itemize}
	\item Let $ \Theta $ be the set of preimages of $ p $, i.e. the set of $ \xi \in\partial\mathbb{D}$ such that $ g^*(\xi)=p $.
	\item Take a M\"obius transformation $M\colon\mathbb{D}\to\mathbb{H}$ which sends $p$ to $\infty$. Then, 
	\[
	h = M \circ g \circ M^{-1}\colon \mathbb{H}\to\mathbb{H} 
	\] 
	is a holomorphic self-map of the upper half-plane. Denote its extension to the real line by $ T\colon \mathbb{R}\to\mathbb{R} $. With a slight abuse of notation, we denote by $ \Sigma $, $ \Theta $ and $ \lambda $ the set of singularities, the set of preimages of $ \infty $, and the pushforward of $ \lambda $ to $ \mathbb{R} $, respectively.
	\item Denote by $\mu$ the Lebesgue measure on $ \mathbb{R} $. For doubly parabolic inner functions $ h\colon\mathbb{H}\to\mathbb{H} $, such a measure is $ T $-invariant (see \cite{DM91}).
\end{itemize}

Moreover, $ h\colon\mathbb{H}\to\mathbb{H} $ has the expression
\[
h(z) = z + \beta
+ \int_{\mathbb{R}} \frac{1 + z w}{w - z} \, d\rho(w),
\]
where $\beta \in \mathbb{R}$, and $\rho$ is a finite positive singular
measure on the real line \cite[Proof of Thm. 9.1]{ivrii2023innerfunctionscompositionoperators}. Differenciating, 
	\[h'(z)=1+\int_\mathbb{R}\frac{w^2+1}{(w-z)^2}\,d\rho(w).\]
	
	A {\em one component doubly parabolic} inner function is, by definition, a one component inner function (in the sense of Def. \ref{def-ivriiurbanski-onecomp}) which is also doubly parabolic. Note that such functions enjoy the properties of Theorem \ref{thm-ivriiurbanski-onecomp}.
	
	Finally, note that, when seen in the real line $ \mathbb{R} $, doubly parabolic inner functions have two parabolic branches (one at $ -\infty $ and another at $ +\infty $), and the preimages of $ \infty $ lie in a compact subset of $ \mathbb{R} $ (and there are possibly countably many of them). An example is $$ x\mapsto \frac1{\tan(1/x) } ,$$ shown in Figure \ref{fig-tangent}.

 \begin{figure}[htb!]\centering
	\captionsetup[subfigure]{labelformat=empty}
	\hfill
	\begin{subfigure}[b]{0.45\textwidth}
		\includegraphics[width=\textwidth]{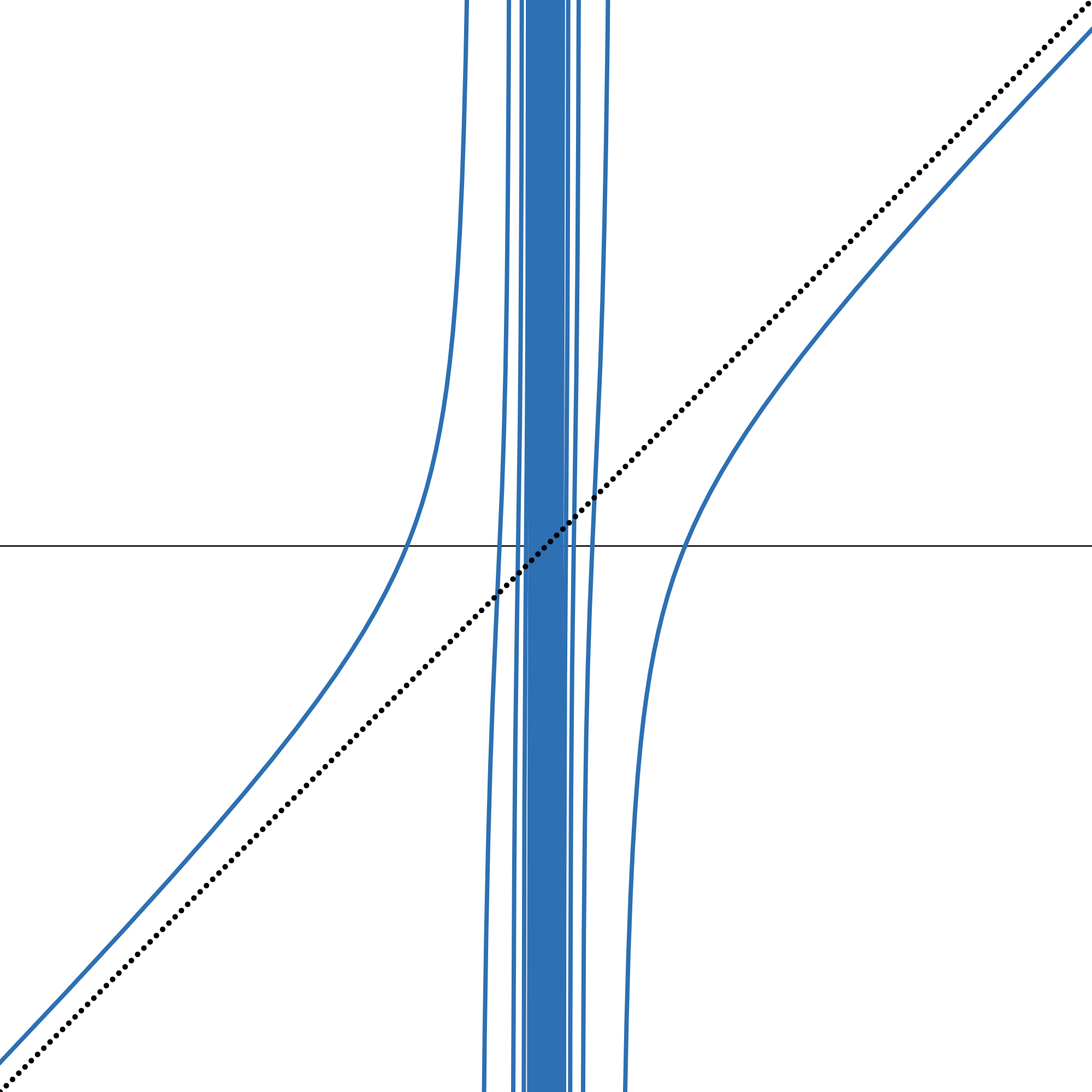}
		\caption{\footnotesize $ x\mapsto 1/\tan(1/x) $}
		
	\end{subfigure}
	\hfill
	\hfill
	\begin{subfigure}[b]{0.45\textwidth}
		\includegraphics[width=\textwidth, ]{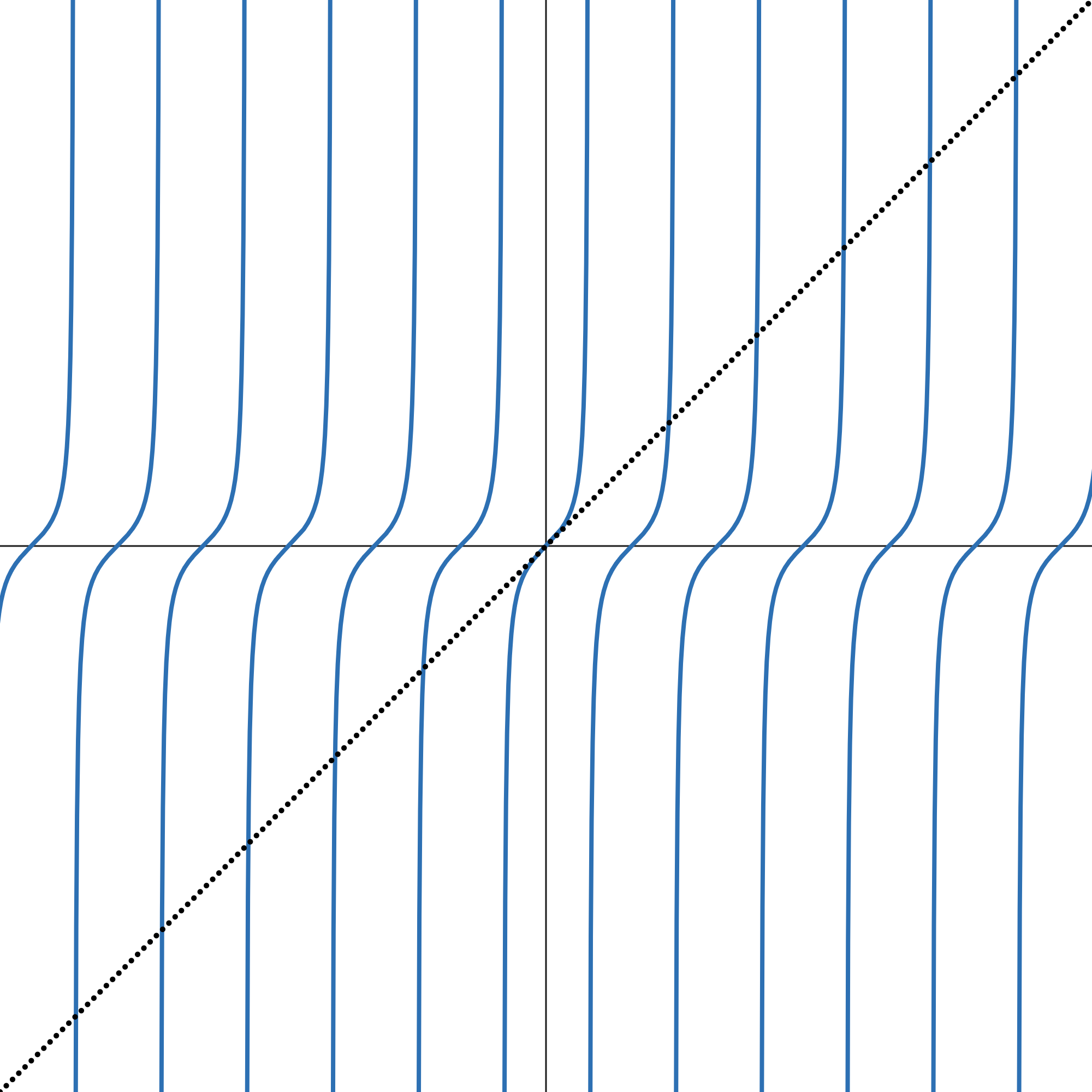}
		\caption{\footnotesize $ x\mapsto \tan x $}
		
	\end{subfigure}
	\hfill
	\caption{{\footnotesize On the left, the graph of $ x\mapsto 1/\tan( 1/x) $, which is the radial extension of a doubly parabolic inner function; 0 is a singularity of the map and $ \infty $ is the doubly parabolic fixed point. On the right, the function $ x\mapsto \tan x $ (which is conjugate to the previous function, but now the Denjoy-Wolff point is at 0 and the  singularity is at infinity).}}\label{fig-tangent}
\end{figure}

The easiest example of a doubly parabolic inner function is the degree 2 map $ h(z)=z-1/z $, whose extension $ T(x)=x-1/x $ is known as {\em Boole's map}, which has been widely studied and will play a distinguished role later on. See Figure \ref{fig-boole}.

	\begin{figure}[htb!]\centering
	\includegraphics[width=8cm]{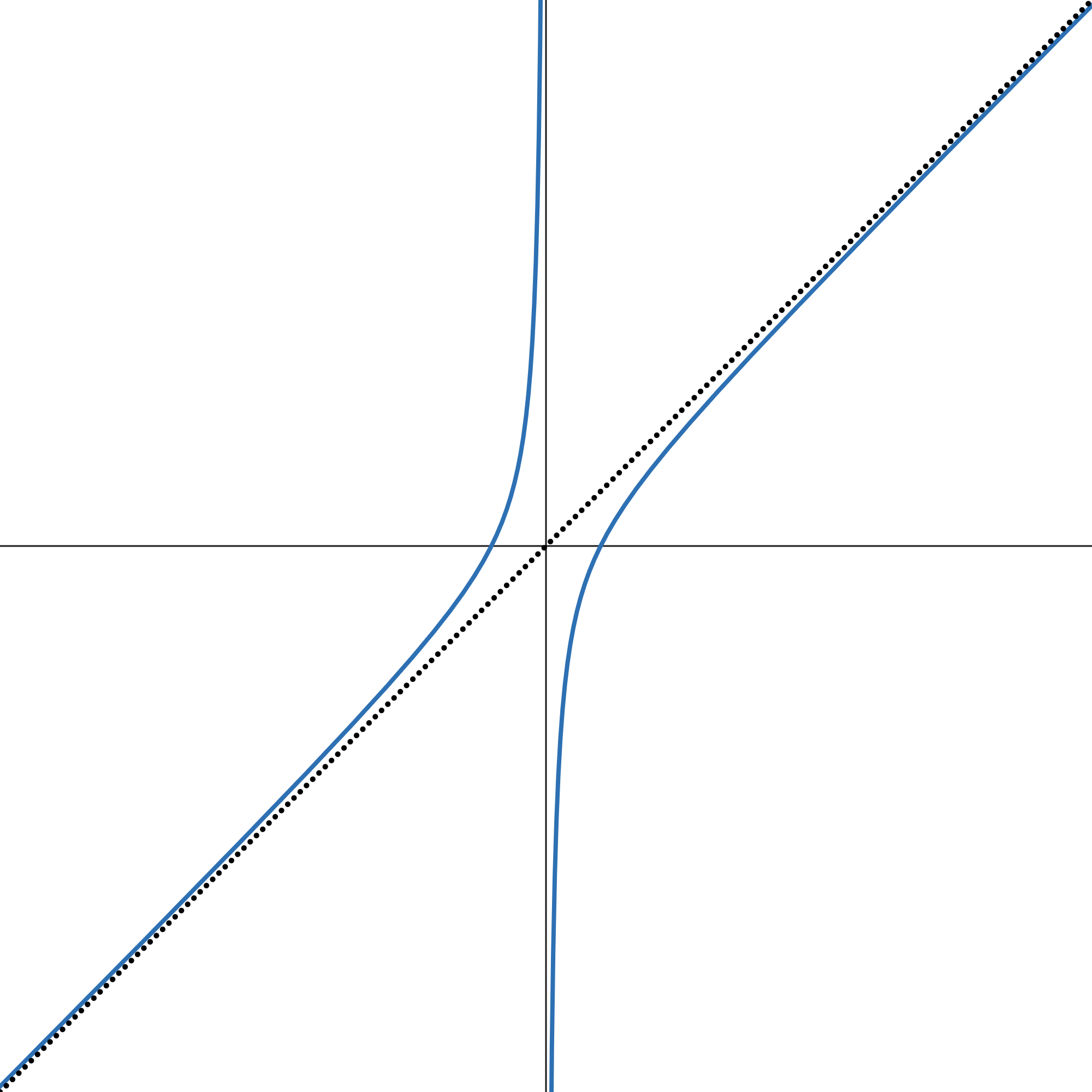}
	\setlength{\unitlength}{8cm}
	\caption{\footnotesize The graph of Boole's map $ T(x)=x-1/x $.}
	\label{fig-boole}
\end{figure}
\subsection{Mapping structure around the Denjoy-Wolff point}\label{subsection-dp-around-infty}
Most of our arguments rely on the linearization around the Denjoy-Wolff point, following the ideas of \cite{ivrii2023innerfunctionscompositionoperators}. 

More precisely, if $h$ is a doubly parabolic inner function with Denjoy-Wolff point at $ \infty $, this means that the normal form of the associated inner function around $ \infty $ point is the one corresponding to a parabolic fixed point with two invariant petals. If we place for a moment the Denjoy-Wolff point at 0, this leads to the normal form 
\[z\mapsto \phi(z) \coloneqq z+az^3+\dots\hspace{0.5cm} a>0,\] near 0 (note that, since we are dealing with inner functions, the map fixes the real line -- compare e.g. with \cite[Chap. 10]{Milnor}). By the change of coordinates $ z\mapsto 1/z $, we get
\[h(z)=\frac1{\phi(1/z)}=z-\frac{c}{z}+\dots, \hspace{0.5cm} c>0,\] near infinity.

Moreover, since $\infty$ is not a singularity of $h$, preimages of $\infty$ do not accumulate to itself. Hence, denote by $p_1^-$ and $p_1^+$ the preimages of $\infty$ which are located most to the left and most to the right, respectively. In particular, $h$ is holomorphic in $\mathbb{R}\smallsetminus\left[p_1^- ,p_1^+\right] $, and one-to-one in each of the intervals $ (-\infty, p_1^-) $ and $ (p_1^+, +\infty) $. We refer to the intervals $ (-\infty, p_1^-) $ and $ (p_1^+, +\infty) $ as the {\em parabolic branches} of $T$; while any interval delimited by two other consecutive preimages of $\infty$ is referred as a {\em hyperbolic branch} of $T$.

 Since $\infty$ is repelling, there is a  backward orbit of $ p_1^- $ in  $ (-\infty, p_1^-) $ converging to $ -\infty $ (and analogously for $ p_1^+ $), which we denote as follows
\[\dots<p_3^-<p_2^-<p_1^-\hspace{0.5cm}\textrm{and}\hspace{0.5cm} p_1^+<p_2^+<p_3^+<\dots,\] with $T(p_{n+1}^\pm)=p_n^\pm$.

We denote by $J^\pm_n$ the interval $\left[ p_n^\pm, p_{n+1}^\pm\right] $. We claim that  $p_n$ is located $ \sim \sqrt n $ away from the origin, and thus $\left| p_{n+1}^\pm - p_n^\pm \right| \sim 1/\sqrt n$.  Indeed, for the normal form with Denjoy-Wolff point at 0,  $ z\mapsto z+az^3+\dots $, $ a>0 $, points converge backwards to 0 as $ 1/ \sqrt n$ (modulo a uniform multiplicative constant which only depends on $c$) \cite[Lemma 10.1]{Milnor}. Therefore, $p_n\leq C\cdot \sqrt n$, for $n$ large enough and $ C>0 $ depending only on $c$, as desired.

Finally, note that there is a preimage of zero placed most to the left, say $z^-$, and one placed most to the right, say $z^+$.
We define $D$ as the set $(z^-, z^+)$,  which will be especially relevant in the sections that follow.

	\begin{figure}[htb!]\centering
	\includegraphics[width=12cm]{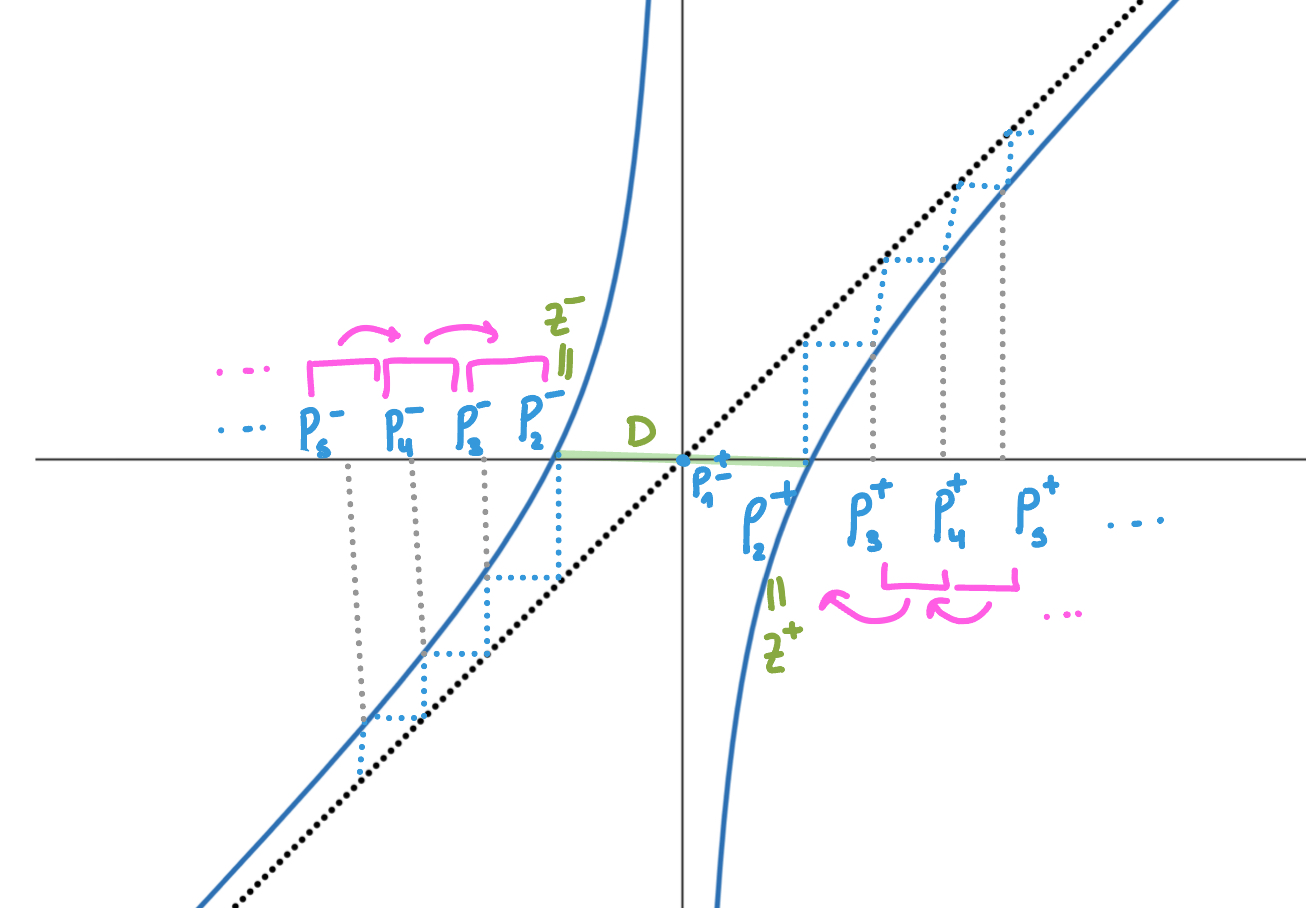}
	\caption{\footnotesize The points $ \dots<p_3^-<p_2^-<p_1^- $ and $ p_1^+<p_2^+<p_3^+<\dots $  for Boole's map $ T(x)=x-1/x $. Note that $  p_1^-$ and $  p_1^+$ coincide (and equal 0), and $ z^\pm=p_2^\pm $.}
	\label{fig-boolep}
\end{figure}

\subsection{Finite degree doubly parabolic Blaschke products}\label{subsection-finite-degree-BP}
Inner functions of finite degree are finite Blaschke products, i.e. they have the form \[g\colon\mathbb{D}\to\mathbb{D}, \ \ \ g(z)= e^{i\theta} \prod_{k=1}^{n}\frac{a_k-z}{1-\overline{a_k}z}, \] where $ \theta\in \left[ 0, 2\pi\right)  $, $ a_k\in\mathbb{D} $, and $ n\in\mathbb{N} $ is the degree of $g$. The following lemma gives a characterization of doubly parabolic finite Blaschke products when seen as functions of the upper half plane.

\begin{lemma}{\bf (Characterization of finite Blaschke products)}\label{lemma-finiteBP} The map $ h\colon\mathbb{H}\to\mathbb{H} $ is a  doubly parabolic inner function of degree $ n\in\mathbb{N} $ if and only if
	\[h(z)=z-\frac{b_1}{z-a_1}-\dots -\frac{b_n}{z-a_n},\]
for $ b_k>0 $ and pairwise different	$ a_k\in\mathbb{R} $.
\end{lemma}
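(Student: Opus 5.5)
We use the Nevanlinna representation of self-maps of $\mathbb{H}$ recalled above: if $h$ fixes $\infty$ with angular derivative $1$ there (which is the case for doubly parabolic $h$, since $\infty$ is parabolic), then
\[
h(z)=z+\beta+\int_{\mathbb{R}}\frac{1+zw}{w-z}\,d\rho(w).
\]
The measure $\rho$ records the singular part of the boundary behaviour: $\operatorname{Im}h(z)-\operatorname{Im}z$ is the Poisson integral of $\rho$. We treat the two implications separately.

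\textbf{($\Rightarrow$).} Let $h=M\circ g\circ M^{-1}$ with $g$ a Blaschke product of degree $n$. Then $h$ is a real rational map of degree $n$. It has no poles in $\overline{\mathbb{H}}\setminus\mathbb{R}$, its poles on $\mathbb{R}$ are the finite preimages $a_1,\dots,a_n$ of $\infty$, and $\infty$ is a fixed point.

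First we show the real poles are simple with negative residues, so $h(z)\approx -b_k/(z-a_k)$ with $b_k>0$ near $a_k$. Near a pole $a$ of order $m$, write $h(z)\approx c/(z-a)^m$. Since $h$ maps $\mathbb{H}$ into $\mathbb{H}$ and is real on $\mathbb{R}$, a small half-disc around $a$ must map into $\mathbb{H}$. This forces $m=1$ and $c<0$: check $z=a+\epsilon e^{i\theta}$, for which $h\approx c\epsilon^{-1}e^{-i\theta}$ must have positive imaginary part for $\theta\in(0,\pi)$.

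Next, the pole at $\infty$ is simple with $h(z)=z+\beta+O(1/z)$. Indeed, the local form $z-c/z+\dots$ at the parabolic point gives derivative $1$ and forces $\beta=0$.

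Now apply partial fractions. $h(z)-z+\sum_k b_k/(z-a_k)$ is a polynomial that is $O(1/z)$ at $\infty$, hence identically $0$. Counting degree, the total number of poles in $\widehat{\mathbb{C}}$ is $n$. One pole sits at $\infty$, so there should be $n-1$ finite poles. However, the point $p=\infty$ itself has preimages: one of them is $\infty$ (the fixed point), and the remaining finite preimages are the $a_k$, giving $n-1$ of them.

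This is where I expect the bookkeeping obstacle: matching the paper's count of $n$ terms with the degree. The formula has $n$ terms, and $z-\sum_{k=1}^n b_k/(z-a_k)$ has degree $n+1$. So the paper's "degree $n$" presumably refers to the number of finite preimages of $\infty$, i.e. the number of hyperbolic branch boundaries. Boole's map, with one term, is called degree $2$ in the introduction, which supports this reading. I would state the convention explicitly and adjust the count accordingly.

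The doubly parabolic condition deserves a check. The expansion $z-c/z$ with $c=\sum b_k>0$ comes for free. We also need the $1/z^2$-order term to vanish in the normal form. This is not automatic, but a real translation conjugacy $z\mapsto z+t$ removes $\beta$, and any constant term is absorbed into the normalization. Alternatively, if we note $h(z)=z+\beta+\dots$, doubly parabolic (tangency of order $3$ at $p$) is exactly $\beta=0$.

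\textbf{($\Leftarrow$).} Given $h(z)=z-\sum_k b_k/(z-a_k)$, compute
\[
\operatorname{Im}h(z)=\operatorname{Im}z\Bigl(1+\sum_k \frac{b_k}{|z-a_k|^2}\Bigr)>0,
\]
so $h(\mathbb{H})\subset\mathbb{H}$. The map is real on $\mathbb{R}$, so it is inner, and it is rational, so $g=M^{-1}\circ h\circ M$ is a finite Blaschke product. Its degree is the number of preimages of a generic point. On each interval between consecutive $a_k$, and on the two unbounded intervals, $h$ is strictly increasing, since $h'(x)=1+\sum_k b_k/(x-a_k)^2>0$. Moreover, on each such interval $h$ runs from $-\infty$ to $+\infty$, so the degree equals the number of intervals, consistent with the count above. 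Finally, at $\infty$ we have $h(z)=z-(\sum_k b_k)/z+O(z^{-2})$, which under $z\mapsto 1/z$ becomes $w+(\sum_k b_k)w^3+\dots$. Hence $\infty$ is the Denjoy–Wolff point, and it is doubly parabolic.
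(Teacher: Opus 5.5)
Your proof is correct, and the degree discrepancy you found is real. The map $z-\sum_{k=1}^{n} b_k/(z-a_k)$ has $n+1$ monotone branches and is a rational map of degree $n+1$. So the correct statement pairs $n$ terms with degree $n+1$ (equivalently, degree $n\ge 2$ with $n-1$ terms). The paper's own proof has the same slip: it asserts that degree $n$ produces $n$ atoms of $\rho$. It also drops the atom masses: writing $\rho=\sum_k c_k\delta_{w_k}$ gives $b_k=c_k(1+w_k^2)$, not $1+w_k^2$.

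One part of your discussion should be fixed, though. Boole's map does not support reading ``degree'' as the number of finite preimages of $\infty$. The paper calls $z-1/z$ (one term) a degree $2$ map, so it uses the standard degree throughout. Say that the lemma is simply off by one, rather than introducing a nonstandard convention.

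Your route differs from the paper's in the forward direction. The paper starts from the Nevanlinna representation and takes for granted that finite degree forces $\rho$ to be finitely atomic. It then rewrites each atom term as $-w_k-(1+w_k^2)/(z-w_k)$ and kills the constant using double parabolicity. You never need the measure. You use only that $h$ is a real rational map with poles on $\mathbb{R}\cup\{\infty\}$ (none off $\mathbb{R}$, by $h(\mathbb{H})\subset\mathbb{H}$ and reflection). The half-disc test then gives simple real poles with negative residues, so $b_k>0$ and distinct $a_k$ come for free. The expansion at the parabolic point gives $h(z)=z+O(1/z)$, and the polynomial/Liouville step finishes.

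This is more elementary, avoids the unproved atomicity step, and makes the pole count transparent. The paper's approach has the merit of sitting inside the same representation used for infinite degree. For the converse, your computation of $\operatorname{Im} h$, the monotone-branch degree count, and the expansion $1/h(1/w)=w+(\sum_k b_k)w^3+\cdots$ are a fuller version of the paper's derivative check at $\infty$.

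Two small repairs are needed. First, the sentence saying a translation conjugacy $z\mapsto z+t$ removes $\beta$ is false. It leaves $\beta$ unchanged, and $z\mapsto az+t$ only rescales it to $\beta/a$. This must be so, since being simply or doubly parabolic is a conjugacy invariant. Delete that sentence and keep your ``alternatively'' sentence, which is all you need: the $w^2$ coefficient of $1/h(1/w)$ is $-\beta$, so doubly parabolic means exactly $\beta=0$. This is forced in $(\Rightarrow)$ and built into the formula in $(\Leftarrow)$.

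Second, in $(\Leftarrow)$ add one line explaining why $\infty$ is the Denjoy--Wolff point. Since $\operatorname{Im} h(z)>\operatorname{Im} z$, the imaginary parts of the iterates increase, so orbits cannot converge to a point of $\mathbb{R}$.
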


Functions $ T\colon \mathbb{R}\to\mathbb{R} $ of the form \[x\mapsto x-\frac{b_1}{x-a_1}-\dots -\frac{b_n}{x-a_n},\] for pairwise different $ a_k\in\mathbb{R} $ and $ b_k>0 $, are usually called {\em generalized Boole's transformations} in the setting of infinite ergodic theory (see e.g. \cite{LiSchweiger}).
\begin{proof}
	It is clear that $ h $ is an inner function of finite degree, and that it fixes infinity. Moreover, since 
	\[h'(z)=1+\frac{b_1}{(z-a_1)^2}+\dots +\frac{b_n}{(z-a_n)^2},\]
	\[h''(z)=-\frac{2b_1}{(z-a_1)^3}-\dots -\frac{2b_n}{(z-a_n)^3},\] we have $ h'(\infty) =1$ and $ h''(\infty)=0 $, implying that $ h $ is doubly parabolic.
	
	It is left to see that any  finite degree doubly parabolic inner function can be written in the previous form. Indeed, 
	from the expression 
	\[
	h(z) = z + \beta
	+ \int_{\mathbb{R}} \frac{1 + z w}{w - z} \, d\rho(w)
	\] and the fact that $ h $ has degree $ n $, one can write \[
	h(z) = z + \beta
	+  \frac{1 + z w_1}{w_1 - z}+\dots +  \frac{1 + z w_n}{w_n - z},
	\] for $ w_1,  \dots, w_n\in \mathbb{R}$. Note that $ w_1,  \dots, w_n$ are pairwise different since $ h|_{\mathbb{R}} $ has no critical points. Then, \[ \frac{1 + z w_k}{w_k - z}=-w_k-\frac{1 + w_k^2}{z-w_k }=-w_k-\frac{b_k}{z-a_k},\] for $ a_k=w_k $ and $ b_k= 1 + w_k^2>0$. It is left to see that the additive constant $ \beta-w_1-\dots-w_n $ is 0, but this follows easily from the fact that $ h $ is doubly parabolic.
\end{proof}

Note that, if we assume $ a_1<\dots<a_n $, then $ a_1=p^-_1 $ and $ a_n=p^+_1 $ (following the notation above). Then, the parabolic branches are $ (-\infty, a_1) $ and $ (a_n, +\infty) $, while the hyperbolic ones are of the form $ (a_k, a_{k+1}) $, for $ k=1,\dots, n-1 $.

	\begin{figure}[htb!]\centering
	\includegraphics[width=8cm]{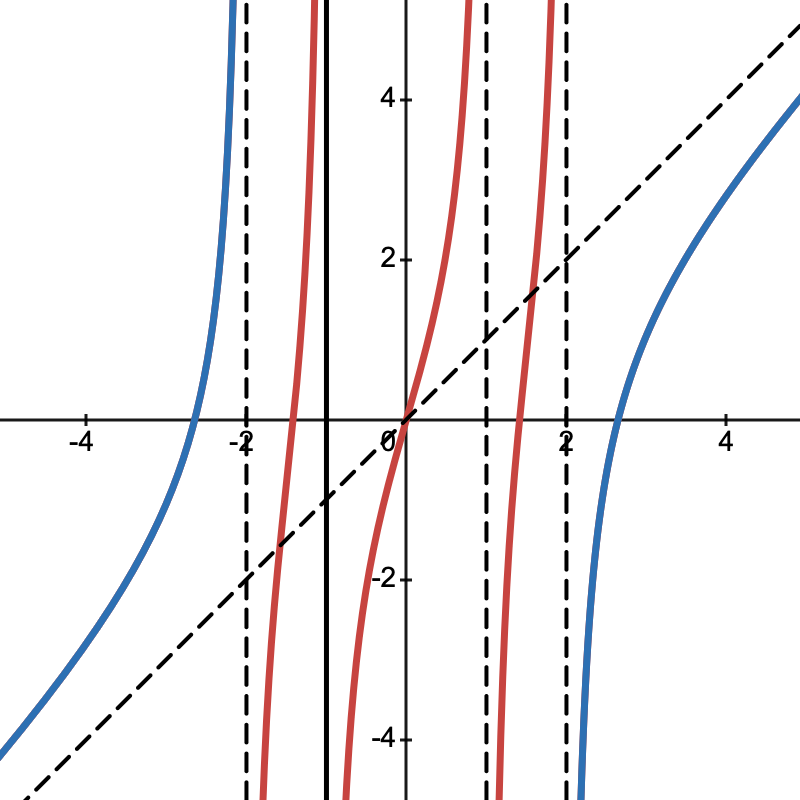}
	\caption{\footnotesize Graph of the function $ T\colon \mathbb{R}\to\mathbb{R} $, $ x\mapsto x-\frac{1}{x-1}-\frac{1}{x+1}-\frac{1}{x-2}-\frac{1}{x+2} $, which is the radial extension of a doubly parabolic Blaschke product of finite degree (seen in $ \mathbb{H} $). Its hyperbolic branches are drawn in red, while the parabolic ones in blue. Note that $ a_1=p^-_1 =-2$ and $ a_n=p^+_1 =2$.}
	\label{fig-finitebp}
\end{figure}

Recall that $D$ is the set $(z^-, z^+)$, where $z^-$ is the preimage of zero placed most to the left, and $z^+$ is the one placed most to the right. The following theorem by Li and Schweiger \cite{LiSchweiger} shows uniform bounds for the distortion of the return map $ T_D\colon D\to D $  (see \cite[Lemmas 3 and 5]{AdlerWeiss} for the same statement for Boole's map).

\begin{thm}{\bf (Distortion for the returns to $ D $, {\normalfont \cite[Lemma 3]{LiSchweiger}})}\label{lemma-LiSchweiger}
	Let $ h\colon\mathbb{H}\to\mathbb{H} $ be a doubly parabolic inner function of finite degree with Denjoy-Wolff point at $\infty$, and let $ T\colon\mathbb{R}\to \mathbb{R} $ be its extension to $ \mathbb{R} $. Let $ D=(z^-, z^+) $, and let $ T_D\colon D\to D $ be the return map to $ D $. Then, there exists a constant $ M>1 $ such that for all $ n\geq 0 $, $ D_n $   connected component of $ T_D^{-n}(D) $ and a measurable set $ E\subset D $, $ \mu(E)>0 $, we have
	\[\frac1M\leq \frac{\mu(D_n\cap T^{-n}_D(E))}{\mu(D_n)\mu(E)}\leq M.\]
\end{thm}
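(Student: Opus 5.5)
The plan is to treat $T_D$ as a uniformly expanding full-branched Markov map with bounded nonlinearity, so that the statement reduces to a standard bounded-distortion (Rényi–Adler) estimate. It is a bounded distortion statement for the induced map, and would be derived from Adler's condition for $T_D$. First describe the branch structure of $T_D$. By Lemma~\ref{lemma-finiteBP}, $T(x)=x-\sum_k b_k/(x-a_k)$ is increasing on each branch (since $T'>1$ off the poles), and each hyperbolic branch $(a_k,a_{k+1})$ maps onto $\mathbb{R}$. A point of $D$ either lands in $D$ after one step, or is sent into $(-\infty,z^-]\cup[z^+,\infty)$. In the latter case it travels along the parabolic branches through the fundamental intervals $J_n^\pm$ and re-enters $D$ when it crosses $p_1^\pm$, because $T$ maps a neighbourhood of $p_1^\pm$ onto a neighbourhood of $\pm\infty$ and the interval between is sent across $D$. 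Hence $D$ is partitioned, up to countably many points, into intervals $\{D^{(j)}\}$ on which $T_D=T^{\tau_D}$ is a diffeomorphism onto $D$ itself. Thus $T_D$ is a full-branched Markov map. One must check that the parabolic excursion is monotone and ends with a full cover of $D$: each cylinder is $D\cap T^{-1}$ of some $J_n^\pm$ pulled back, and $T^{n}$ maps $J_n^\pm$ onto $J_1^\pm$, whose image covers $D$.

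Second, prove an Adler-type bound $\sup|T_D''|/(T_D')^2\le C$ on every branch, with $C$ uniform. For $T$ itself on the hyperbolic branches, $T''/(T')^2$ is bounded near poles because $T'\sim b_k/(x-a_k)^2$ and $T''\sim 2b_k/(x-a_k)^3$, giving a ratio $\sim (x-a_k)/b_k\to0$; away from poles it is bounded by compactness. The key point is that the composition rule
\[
\frac{(F\circ G)''}{((F\circ G)')^2}=\frac{F''\circ G}{(F'\circ G)^2}+\frac{1}{F'\circ G}\cdot\frac{G''}{(G')^2}
\]
does not by itself give a uniform bound along long parabolic excursions, since $T'\to1$ there. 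Instead, I would estimate the sum of nonlinearities along the excursion: $\log (T^n)'$ has variation on a branch controlled by $\sum_{i}|T''/T'|(x_i)\,|J_{n-i}|$. Near $\pm\infty$ we have $T''\sim 2c/x^3$, $T'\approx 1$, $|x_i|\sim\sqrt{n-i}$, and $|J_{n-i}|\sim 1/\sqrt{n-i}$. The sum is therefore $\sum_m m^{-3/2}m^{-1/2}<\infty$, uniformly in $n$. This is the standard Thaler distortion estimate for indifferent fixed points, and I expect it to be the main obstacle: it needs the asymptotics $p_n\sim\sqrt n$ and $|J_n|\sim1/\sqrt n$ from Section~\ref{subsection-dp-around-infty}, together with the fact that the last (hyperbolic) step has bounded distortion on its preimage piece. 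The pieces near the poles are harmless by the ratio computation above.

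Third, conclude. The previous step gives a uniform Koebe/Rényi-type bound $\sup_{D^{(j)}} T_D'/\inf_{D^{(j)}} T_D'\le K$. Since $T_D$ is uniformly expanding (a bounded power is, as $T'>1$ and excursions expand), the distortion bounds for compositions $T_D^n$ on $n$-cylinders $D_n$ follow by the usual telescoping argument with geometrically shrinking cylinder lengths. This yields $\sup_{D_n}(T_D^n)'\le K'\inf_{D_n}(T_D^n)'$ with $K'$ independent of $n$. Then
\[
\mu(D_n\cap T_D^{-n}E)=\int_E \bigl|(T_D^{-n}|_{D_n})'\bigr|\,d\mu
\]
is comparable to $\mu(E)\cdot\mu(D_n)/\mu(D)$, because $\mu(D_n)=\int_D|(T_D^{-n})'|d\mu$. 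Setting $M=K'\max(\mu(D),\mu(D)^{-1})$ gives the claim.
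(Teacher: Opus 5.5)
The paper does not prove this statement itself. It quotes \cite[Lemma 3]{LiSchweiger} for generalized Boole transformations and transfers it via Lemma~\ref{lemma-finiteBP}. Your self-contained route breaks at its first step: $T_D$ is \emph{not} full-branched onto $D$.

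Since $z^-<a_1=p_1^-$ and $p_1^+=a_n<z^+$, the points $p_1^\pm$ lie inside $D$. So an excursion does not end ``when it crosses $p_1^\pm$''. On $(a_n,\infty)$ we have $T(x)<x$ and $T([z^+,\infty))=[0,\infty)$. Hence a right excursion re-enters $D$ the first time it drops below $z^+$, and it lands in $[0,z^+)$. Symmetrically, left excursions land in $(z^-,0]$.

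The relevant fundamental domains are $[q_m,q_{m+1})$ with $q_1=z^+$ and $T(q_{m+1})=q_m$, not the $J_n^\pm$. Moreover $T(J_1^+)=(-\infty,p_1^+]$, which misses $(p_1^+,z^+)\subset D$, so it does not cover $D$. The one-step branches in $(z^-,a_1)$ and $(a_n,z^+)$ are not full either, since $T$ maps these intervals onto $(0,\infty)$ and $(-\infty,0)$. So the branch images of $T_D$ are $D$, $(0,z^+)$ or $(z^-,0)$, and only the one-step returns from the hyperbolic branches $(a_k,a_{k+1})$ are full. For Boole's map ($D=(-1,1)$, no hyperbolic branch), $T_D$ maps $(0,1)$ into $(-1,0]$ and $(-1,0)$ into $[0,1)$.

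This invalidates your last step. For an $n$-cylinder, $\mu(D_n)=\int_{T_D^n(D_n)}|((T_D^n|_{D_n})^{-1})'|\,d\mu$, not $\int_D$. Also $\mu(D_n\cap T_D^{-n}(E))=0$ whenever $E\cap T_D^n(D_n)=\emptyset$. Take Boole's map with $n=1$, $D_1=(0,1)$ (a component of $T_D^{-1}(D)=D\setminus\{0\}$, or any $1$-cylinder inside it) and $E=(0,1)$. Then the quotient in the statement is $0$. So the displayed lower bound is false as literally stated, and no argument can give it for arbitrary $E\subset D$.

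Your Steps~2--3 are sound: the Thaler-type summation $\sum_m m^{-3/2}\cdot m^{-1/2}<\infty$ along excursions, Adler's condition near the poles, $T_D'\ge\inf_D T'>1$, and telescoping over $n$-cylinders. What they give is uniform bounded distortion of $T_D^n$ on $n$-cylinders, hence
\[
\mu\bigl(D_n\cap T_D^{-n}(E)\bigr)\asymp \mu(D_n)\,\frac{\mu\bigl(E\cap T_D^n(D_n)\bigr)}{\mu\bigl(T_D^n(D_n)\bigr)}
\]
with constants independent of $n$.

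To turn this into the displayed two-sided bound, even for $E\subset T_D^n(D_n)$, you would also need $\mu(T_D^n(D_n))$ bounded below uniformly. That is a finite-image property for all iterates of $T_D$. You have not shown this, and it is not automatic. The branch images have $0$ as an endpoint, and $0$ can lie in the interior of a branch domain of $T_D$. For $T(x)=x-4/(x-1)$ one has $z^+=(1+\sqrt{17})/2$ and $T(0)=4\in(q_2,q_3)$, with $q_2\approx 3.93$ and $q_3\approx 4.94$. In such cases the images of cylinders acquire new endpoints along the orbit of $0$.
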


\begin{obs}
	The result in \cite[Lemma 3]{LiSchweiger} is stated for generalized Boole's transformations. By Lemma \ref{lemma-finiteBP}, equivalently it holds for doubly parabolic inner function of finite degree with Denjoy-Wolff point at $\infty$, as stated in \ref{lemma-LiSchweiger}.
\end{obs}

\section{Ergodic properties of the radial extension of doubly parabolic inner functions}\label{section-ergodic-DP}
We next prove some ergodic  properties of the radial extension of doubly parabolic inner functions.
\begin{thm}\label{thm-a-in}
	Let $ h\colon\mathbb{H}\to\mathbb{H} $ be a doubly parabolic inner function with Denjoy-Wolff point at $\infty$, and let $ T\colon\mathbb{R}\to \mathbb{R} $ be its extension to $ \mathbb{R} $. %Assume $ \infty $ is not a singularity of $ h $.
\begin{enumerate}[label={\em (\alph*)}]
	\item\label{thma-inside-a} Let $ E\subset\mathbb{R} $ be a compact interval, and let\[ S_nE(x)= \# \left\lbrace 0\leq k\leq n-1: h^k(x)\in E \right\rbrace =\sum_{k=0}^{n} \mathbbm{ 1 }\circ h^k(x).\]  
Then, for all $\varepsilon\in (0, \frac12)$,  \[\lim\limits_n \frac{S_nE(x)}{n^{\frac12-\varepsilon}}\to\infty, \] for $ \lambda $-almost every $ x\in \mathbb{R} $. 
\item\label{thma-inside-b} Let $ E\subset\mathbb{R} $ be a compact interval, and let \[E_n\coloneqq \left\lbrace x\in E\colon \tau_E(x)=n\right\rbrace, \] where $ \tau_E(x) \in \mathbb{N}$ denotes the first return time of $ x\in E $ to $ E $. Then, $ \lambda(E_n)\sim  \frac{1}{n\sqrt{n}} $.
\item\label{thma-inside-c} Let $ E\subset\mathbb{R} $ be a compact interval,  let $ F=\mathbb{R}\smallsetminus E $. Assume $E$ is large enough. Let \[F_n\coloneqq \left\lbrace x\in F\colon h(x), \dots, h^{n-1}(x)\in F, h^{n}(x)\notin F\right\rbrace. \] Then, $ \lambda(F_n)\sim \frac{1}{\sqrt{n}} $.
\end{enumerate}
\end{thm}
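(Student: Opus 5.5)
The plan is to run everything through the invariant measure $\mu$ (Lebesgue on $\mathbb{R}$). On compact sets $\mu$ and $\lambda$ are mutually bounded, so every statement may be checked for $\mu$; almost-everywhere statements are unaffected since $\mu$ and $\lambda$ are equivalent. We use that $T$ is ergodic and conservative with respect to $\mu$ \cite{DM91}, so any compact interval $E$ of positive length is a sweep-out set. The core computation is part (c), for the complementary set $F$. Parts (b) and (a) then follow: (b) from Lemma \ref{lemma-zweimuller} together with a monotonicity/differencing argument, and (a) from Theorem \ref{thm-aaronson}.

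\textbf{Step 1 (part (c)).} Take $E\supset[p_1^-,p_1^+]$; this is the meaning of ``$E$ large enough'', and we may enlarge $E$ to $[p_k^-,p_k^+]$ for some $k$ up to bounded changes. Then $F$ consists of the two parabolic tails, on which $T$ is injective and pushes points towards $\pm\infty$ in backward time. A point $x\in F$ with $x>0$ stays in $F$ for exactly $n$ steps precisely when it lies in the fundamental interval $J^+_{m}=[p^+_m,p^+_{m+1}]$ with $m\approx n+k$, up to a bounded shift. The estimate $p_n\sim\sqrt n$ from the normal form $z\mapsto z+az^3+\dots$ (\cite[Lemma 10.1]{Milnor}) then gives $|J^\pm_n|\sim 1/\sqrt n$. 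Hence $\mu(F_n)\sim 1/\sqrt n$, and since $F_n$ sits at distance $\sim\sqrt n$ from the origin, the same holds for $\lambda$ after the change of coordinates, possibly with a different power.

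This is the step I expect to be the main obstacle. Two issues must be settled: which of $\lambda$ (pulled back from $\partial\mathbb{D}$) and $\mu$ the estimate refers to, and the precision of the asymptotic $|J_n|\sim n^{-1/2}$. For the latter, a two-sided estimate $p_n\asymp\sqrt n$ is not enough, so I would use the Fatou coordinate $w=x^2/(2c)$, in which $T$ is approximately $w\mapsto w-1$. This yields $p_n^2\sim 2cn$ and hence $|J_n|\sim\sqrt{c/(2n)}$.

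\textbf{Step 2 (part (b)).} By Lemma \ref{lemma-zweimuller}, $\mu(E\cap\{\tau_E>n\})=\mu(F\cap\{\tau_E=n\})$. The right-hand side equals $\mu(F_{n})$ (up to an index shift), which is $\sim n^{-1/2}$ by Step 1, applied with $E$ large. This gives $\mu(E\cap\{\tau_E>n\})\sim n^{-1/2}$. Writing $\mu(E_n)$ as the difference of consecutive tails, we need the tails themselves to be regularly varying with an explicit leading constant, so that their difference is $\sim n^{-3/2}$. Using the Fatou coordinate, the tail is $C n^{-1/2}+O(n^{-1})$ with explicit $C$, and the difference is then $\tfrac C2 n^{-3/2}+O(n^{-2})$. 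For general compact intervals $E$, I would reduce to the large case via the Hopf ratio theorem (Theorem \ref{hopf-ergodic-thm}) for comparability, or directly: $E$ contains an interval whose preimages reach the parabolic tails with bounded distortion, so the first-return tail asymptotics transfer up to constants.

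\textbf{Step 3 (part (a)).} Apply Theorem \ref{thm-aaronson} with $a(n)=n^{1/2-\varepsilon}$ and $A=E$. This requires
\[
\int_E \tau_E^{1/2-\varepsilon}\,d\mu=\sum_n n^{1/2-\varepsilon}\mu(E_n)\lesssim\sum_n n^{-1-\varepsilon}<\infty,
\]
using part (b). Taking $f=\mathbbm{1}_E\in L^1_+(\mu)$ then gives $S_nE/n^{1/2-\varepsilon}\to\infty$ $\mu$-a.e., hence $\lambda$-a.e.
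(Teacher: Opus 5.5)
Your architecture matches the paper's: Lemma~\ref{lemma-zweimuller} with the fundamental intervals $J_n^\pm$ for (b) and (c), and Theorem~\ref{thm-aaronson} with $a(n)=n^{1/2-\varepsilon}$ for (a). But two steps do not go through as written.

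\textbf{Step 1 never reaches the estimate for $\lambda$.} You read $F_n$ as the exact-exit set. For $E=[p_k^-,p_k^+]$, $k\ge2$, this gives $F_n=F\cap\{\tau_E=n\}=J^+_{n+k-1}\cup J^-_{n+k-1}$, and you correctly obtain $\mu(F_n)\asymp n^{-1/2}$. The sentence ``the same holds for $\lambda$ \dots\ possibly with a different power'' is exactly where the argument stops. The measure $\lambda$ has density $\asymp 1/(1+x^2)$, and $J^\pm_m$ lies at distance $\asymp\sqrt m$ from the origin. Hence $\lambda(F_n)\asymp n^{-1/2}\cdot n^{-1}=n^{-3/2}$, so the power does change.

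The paper gets $n^{-1/2}$ for $\lambda$ from a different set. It identifies $F_n$ with the whole tails $(-\infty,p_n^-]\cup[p_n^+,+\infty)$, i.e.\ the points of $F$ that stay in $F$ for at least about $n$ steps. It then integrates the density of $\lambda$: $\lambda([p_n^+,\infty))\asymp\int_{\sqrt n}^{\infty}\frac{dx}{1+x^2}\asymp n^{-1/2}$. You must decide which set and which measure you are estimating. As it stands, Step 1 proves $\mu(F_n)\asymp n^{-1/2}$, which is not the statement.

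\textbf{Your repair of the differencing in Step 2 is invalid.} From $\mu(E\cap\{\tau_E>n\})=Cn^{-1/2}+O(n^{-1})$ you cannot conclude that consecutive differences are $\frac C2 n^{-3/2}+O(n^{-2})$. The difference of two $O(n^{-1})$ errors is only $O(n^{-1})$, which swamps $n^{-3/2}$. (The paper simply asserts this step.)

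Difference at the level of intervals instead, with $E=[p_k^-,p_k^+]$, $k\ge2$. This choice also prevents orbits jumping from one tail to the other, which is a problem for $[p_1^-,p_1^+]$ since $T\bigl((p_1^+,p_2^+]\bigr)=(-\infty,p_1^+]\supset(-\infty,p_1^-)$. Applying Lemma~\ref{lemma-zweimuller} at times $n-1$ and $n$ gives
$\mu(E_n)=\sum_\pm\bigl(|J^\pm_{m-1}|-|J^\pm_m|\bigr)$ with $m=n+k-1$.
Moreover $|J^+_{m-1}|-|J^+_m|=G(p^+_m)-G(p^+_{m+1})$, where $G(x)=x-T(x)=c/x+O(x^{-2})$ is analytic at $\infty$. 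The mean value theorem with $G'(x)=-c/x^2+O(x^{-3})$ gives $\asymp |J^+_m|/(p^+_m)^2\asymp m^{-3/2}$. This uses only $p_m\asymp\sqrt m$, so no Fatou-coordinate constant is needed.

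\textbf{(a) does not depend on (b).} Summation by parts gives
$\int_E\tau_E^{1/2-\varepsilon}\,d\mu=\sum_{n\ge1}\bigl(n^{1/2-\varepsilon}-(n-1)^{1/2-\varepsilon}\bigr)\mu(E\cap\{\tau_E\ge n\})\lesssim\sum_n n^{-1-\varepsilon}<\infty$,
so the tail bound alone suffices. Passing to arbitrary compact $E$ is automatic there, because Theorem~\ref{thm-aaronson} needs integrability for one set only.

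\textbf{The reduction to arbitrary compact $E$ in (b) is unjustified.} Hopf's ratio theorem compares occupation times, not return-time laws. Neither it nor your distortion remark transfers $\mu(E_n)\asymp n^{-3/2}$ from $[p_k^-,p_k^+]$ to small $E$ without further work. The paper's reduction here is equally terse.
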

\begin{proof}
	Observe that, by Hopf Ergodic Theorem
	\ref{hopf-ergodic-thm}, in items {\em\ref{thma-inside-a}} and {\em\ref{thma-inside-b}} it is enough to prove the statement for a (well-chosen) set $E$, and then it  holds for any compact interval.

	Let us prove first {\em\ref{thma-inside-b}} and {\em\ref{thma-inside-c}}, for which we rely on the linearization around the Denjoy-Wolff point (Sect. \ref{subsection-dp-around-infty}).

	To prove {\em\ref{thma-inside-b}},  assume  $E=\left[ p^-_1, p^+_1\right] $. By Lemma \ref{lemma-zweimuller}, \[\mu (E\cap \left\lbrace \tau_E>n \right\rbrace)=\mu(E^C\cap\left\lbrace \tau_E=n \right\rbrace ),\] where $ \tau_E $ denotes the first return time to $ E $. By the definition of $p^\pm_n$, 
	\[E^C\cap\left\lbrace \tau_E=n \right\rbrace =J^-_n\cup J^+_n .\] Therefore, 
	\[\mu(E\cap \left\lbrace \tau_E>n \right\rbrace)=\mu(J^-_n\cup J^+_n )\sim \frac1{\sqrt{n}}\]
	and 
	\[\mu (E\cap \left\lbrace \tau_E=n \right\rbrace)\sim\frac1{n\sqrt{n}}.\] Since $ \lambda $ and $ \mu $ are comparable at compact intervals of the real line, \[\lambda (E\cap \left\lbrace \tau_E=n \right\rbrace)\sim\frac1{n\sqrt{n}},\]  as desired.
	Since it is enough to prove the statement for this specific set $E$, this ends the proof of {\em \ref{thmA.b}}. 
	
	To prove {\em\ref{thma-inside-c}}, assume first $E=\left[ p^-_1, p^+_1\right] $, so $ F=\mathbb{R}\smallsetminus \left[ p^-_1, p^+_1\right] $. Since $ p^\pm_2 $ are the unique preimages in $ F $ of $ p^\pm_1 $, the interval $J_1$ corresponds to the points in $ F $ which escape from $ F $ in one iteration, while points in $J_n$ are the points in $ F $ which escape from $ F $ in $n$ iterations. Thus, $$ F_n=(-\infty, p_n^-]\cup [p_n^+, +\infty) .$$
	
	Finally, we compute the $\mu$-measure of the intervals $ (-\infty, p_n^-]\cup [p_n^+, +\infty) $ (in fact, we just do it for the positive one, and we already get the desired result, which is up to multiplicative constant). Indeed,
	\[\int_{\sqrt n}^{+\infty}\frac2{x^2+1}d\mu(x)=\pi- 2\arctan\sqrt n\sim \frac 1{\sqrt n}\] showing the desired result in the case $E=\left[ p^-_1, p^+_1\right] $. Clearly, the statement holds true for any interval of the form $E=\left[ p^-_k, p^+_k\right] $, and then for any compact interval $E$ large enough.
	This ends the proof of statement {\em \ref{thmA.c}}.
	
	To prove {\em\ref{thma-inside-a}}, we work again with the set $E=\left[ p^-_1, p^+_1\right] $. As above, denote by $\tau_E$ the first return time to $E$. By Theorem \ref{thm-aaronson}, it is enough to show that, for $a(n)=n^{\frac12-\varepsilon}$, $ \varepsilon\in (0, \frac12) $, 
	\[\int_E a(\tau_E(x))d\mu(x)<\infty.\]
	Indeed, by {\em\ref{thma-inside-b}}, we have
	\[\int_E a(\tau_E(x))d\mu(x)=\sum_n a(n)\cdot \mu(E\cap \left\lbrace \tau_E=n\right\rbrace )\sim\sum_n  n^{\frac12-\varepsilon}\frac1{n\sqrt n} , \]and this later sum clearly converges.
	This ends the proof of the theorem.
\end{proof}

\subsection{AFN-systems and one component inner functions}
In this section we prove that the radial extension of a one component doubly parabolic inner function is essentially an AFN-system, which allow us to deduce finer distributional properties for its orbits.
\begin{lemma}{\bf (One component doubly parabolic inner functions are AFN)}\label{lem:innerAFN}
Let $ h\colon\mathbb{H}\to\mathbb{H} $ be a one component doubly parabolic inner function, and let $ T\colon\mathbb{R}\to \mathbb{R} $ be its radial extension to $ \mathbb{R} $. Then, there exists an analytic homeomorphism $ \varphi\colon (-\pi/2,\pi/2)\to\mathbb{R} $ such that $ ((-\pi/2,\pi/2), S, \mathcal{I}) $ is an AFN-system, where $ S \coloneqq \varphi^{-1}\circ T\circ \varphi$ and $ \mathcal{I}$ is the set of all intervals of $(-\pi/2,\pi/2)\smallsetminus \varphi^{-1}(\overline{\Theta}) $, $ \Theta=T^{-1}(\infty) $.
\end{lemma}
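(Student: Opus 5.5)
The natural choice is $\varphi(y)=\tan y$, so that $S=\arctan\circ T\circ\tan$ on $X=(-\pi/2,\pi/2)$. This conjugation sends $\pm\infty$ to the endpoints $\pm\pi/2$, and $T^{-1}(\infty)$ to the points where $S$ jumps between $\pm\pi/2$. The proof then consists of checking the conditions of Definition \ref{def:AFN} one by one for $S$.

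\emph{Preliminary facts.} Take $\mathcal{I}$ to be the components of $X\smallsetminus\varphi^{-1}(\overline{\Theta})$. I would first show that $\overline{\Theta}\subset\Theta\cup\Sigma$ has Lebesgue measure zero, using Theorem \ref{thm-ivriiurbanski-onecomp}(d) ($\lambda(\Sigma)=0$) together with the countability of $\Theta$. On each component, $T$ is real analytic. Since $h'(x)=1+\int\frac{w^2+1}{(w-x)^2}d\rho(w)>1$ on $\mathbb{R}\smallsetminus\Sigma$, $T$ is strictly increasing there. Between two consecutive points of $\overline\Theta$ there are no further preimages of $\infty$, so $T$ maps each such interval onto $\mathbb{R}$. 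It follows that $S(I_k)=X$ for every hyperbolic $I_k$, while the two parabolic branches map onto $(-\pi/2,\arctan\text{-image})$-type intervals, namely $S((-\pi/2,\arctan p_1^-))=X$ and $S((\arctan p_1^+,\pi/2))=X$. So in fact every branch is full, and the finite image condition (ii) holds. Conservativity and ergodicity with respect to $\lambda$ follow from \cite{DM91}, as recalled in the introduction, because $\mu$ is equivalent to $\lambda$ and conjugation by the analytic map $\varphi$ preserves null sets.

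\emph{Parabolic points (iii), (iv).} The only parabolic points are $\pm\pi/2$. Near $+\infty$ we have $T(x)=x-c/x+O(1/x^2)$ with $c>0$; this needs care, since the expansion $x-c/x+\dots$ from Section \ref{subsection-dp-around-infty} holds for real $x$ near $\infty$. Setting $y=\pi/2-\arctan x$, i.e. $x=\cot y$, I would compute $\arctan(\cot y - c\tan y+\dots)$ and get $S(\pi/2-y)=\pi/2-y-a y^3+O(y^4)$ with $a=c>0$. This gives $p_l=2$. The same computation at $-\pi/2$ gives the same coefficient, which is consistent with $\beta=1/2$ later. The one-sided regular source condition (iii) asks that $(x-x_l)S''(x)\ge0$ on the parabolic branch. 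I would expect to derive it from the sign of the cubic term near $\pm\pi/2$. Away from $\pm\pi/2$, possibly after refining $\mathcal{J}$, I would use the freedom to choose the intervals of $\mathcal{J}$ to be the full neighbourhoods $S^{-1}$ of the endpoint branch; if the sign condition fails globally there, I would refine the partition by adding the points $p_n^\pm$, which keeps the image set finite.

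\emph{Adler's condition and expansion (i), (v); main obstacle.} I expect these to be the main difficulty, since the conjugation by $\tan$ distorts everything near $\pm\pi/2$ and near the jumps. For (i), the composition rule for the Schwarzian-like quantity $S''/(S')^2$ gives
\[\frac{S''}{(S')^2}=\frac{(\arctan)''\circ T}{(\arctan')^2\circ T}+\frac{T''\circ\varphi}{(T'\circ\varphi)^2}\cdot\frac{1}{\arctan'\circ T}+\frac{\varphi''}{\varphi'^2}\cdot\frac{1}{S'}.\]
Theorem \ref{thm-ivriiurbanski-onecomp}(d) bounds $T''/(T')^2$. This bound is transported from $\partial\mathbb{D}$ via the Möbius map, and the Möbius factors are exactly what the $\tan$ coordinates undo, so the key point is that $\varphi=\tan$ corresponds, up to a rotation, to the Cayley transform on $\partial\mathbb{D}$. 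In other words, $S$ is essentially the lift of $g|_{\partial\mathbb{D}}$ to arguments, via $\theta\mapsto 2\theta$ up to a harmless rescaling. So the cleanest route is to say that $S$ is conjugate by a real-affine change of variables to $\theta\mapsto\arg g(e^{i\theta})$, after which (i) is literally Theorem \ref{thm-ivriiurbanski-onecomp}(d). For (v), the same identification gives $|S'|=|g'|$. On $\partial\mathbb{D}\smallsetminus\Sigma$, $|g'(\xi)|$ equals the Poisson-type integral $\int\frac{1-|a|^2}{|\xi-a|^2}+\dots$, which is at least $1$. It is strictly bigger than $1$ and tends to $\infty$ near $\Sigma$ by Theorem \ref{thm-ivriiurbanski-onecomp}(d). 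On compact sets away from $\pm\pi/2$ and $\Sigma$, continuity gives $|S'|\ge K>1$. The only remaining issue is the equality case $|g'|=1$, which occurs only at the Denjoy–Wolff point, and this is excluded by staying bounded away from it.
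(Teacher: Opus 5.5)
Your plan matches the paper's: conjugate by a tangent, then check the AFN axioms, with (ii) coming from full branches and (i) from Ivrii--Urba\'nski (d). However, the choice $\varphi=\tan$ breaks condition (v), and your argument for (v) rests on a false inequality. The bound $|g'(\xi)|\ge 1$ on $\partial\mathbb{D}$ is a property of \emph{centered} inner functions: in the Blaschke/Poisson sum, the zero at $0$ contributes exactly $1$. A doubly parabolic $g$ has no fixed point in $\mathbb{D}$, so no such bound holds.

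Concretely, with $s=\tan y$ you have $S'(y)=\frac{(1+s^2)\,T'(s)}{1+T(s)^2}$. At a pole $a$ of $T$ with $T(s)\sim -b/(s-a)$, this tends to $(1+a^2)/b$, which can be $\le 1$.
\begin{itemize}
\item For Boole's map, $S'(y)=(1+\tan^2 y)^2/(\tan^4 y-\tan^2 y+1)\to 1$ as $y\to 0$. So $|g'|=1$ also at the preimage $-1$ of the Denjoy--Wolff point, not only at the Denjoy--Wolff point itself.
\item For $T(x)=x-4/x$, also a one component doubly parabolic inner function, $S'(y)\to 1/4$, so $S$ even contracts there.
\end{itemize}
Since $y=0$ is bounded away from $\pm\pi/2$, $S$ is not uniformly expanding on sets bounded away from the parabolic points. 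Condition (v) fails for $\varphi=\tan$.

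The missing idea is to use the normalization freedom. The paper takes $\varphi(x)=p\tan x$ with $p$ large, which amounts to first conjugating $T$ by the dilation $x\mapsto px$. This replaces the pole data $(a,b)$ by $(a/p,b/p^2)$, so the limiting derivative at the preimages of $\infty$ becomes $(p^2+a^2)/b$, which is large. The paper then shows $S'=\frac{p^2+s^2}{p^2+T(s)^2}\,T'(s)\ge K>1$ in two regions:
\begin{itemize}
\item For $s\notin(z^-,z^+)$, it uses $T(s)^2<s^2$ together with $T'>1$.
\item For $s\in(z^-,z^+)$, Jensen's inequality in the Nevanlinna representation gives $T(s)^2\le\int\frac{(\alpha w+\beta)^2}{(w-s)^2}\,d\rho(w)$. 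One then chooses $p$ with $\frac{p^2}{K}(w^2+1)\ge(\alpha w+\beta)^2$ for all $w$.
\end{itemize}
Without such a rescaling, your verification of (v) cannot go through.

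Two smaller points also need fixing.
\begin{itemize}
\item In (ii), you need the covering property (c) of Theorem \ref{thm-ivriiurbanski-onecomp} to know that the endpoints of the intervals in $\mathcal{I}$ lie in $\Theta$ rather than in $\Sigma$; otherwise a branch need not be full.
\item In (i), the identification with the argument lift $G$ of $g$ does not give (d) literally. One has $G''/G'^2=ig\bigl(g''/g'^2+(1-G')/(\xi g')\bigr)$, and the extra term is bounded only because Julia's lemma gives $|g'|\ge(1-|g(0)|)/(1+|g(0)|)$ on the circle.
\end{itemize}
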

\begin{proof}
Take $\varphi(x)=p\tan(x)$, for $p$ large enough (the assumption on $p$ is at the end of the proof). We have the following commutative diagram:
\[
\begin{tikzcd}
	\mathbb{R} \arrow[r, "T"] 
	& \mathbb{R} \\
{	(-\frac{\pi}{2},\frac{\pi}{2})} \arrow[u, "\varphi"] \arrow[r, "S"] 
	& {(-\frac{\pi}{2},\frac{\pi}{2}) }\arrow[u, "\varphi" ']
\end{tikzcd}
\]
	
	We have to check that the system $ ((-\pi/2,\pi/2), S, \mathcal{I}) $ is indeed an AFN system, i.e. check the conditions of Definition \ref{def:AFN}. First we will show those results for the map $T$, i.e. on the real line, and then show that they are preserved by our conjugation.
	
Observe that for a parabolic inner map on the upper half-plane we have an equivalent statement of Theorem \ref{thm-ivriiurbanski-onecomp}. In particular we will use (c) and (d). 

By definition, the set $\overline{\Theta}$ is the union of $\Theta$ (i.e. the preimages of infinity) and the singular set $\Sigma$ (the accumulation points of the preimages of infinity). 
By Thm. \ref{thm-ivriiurbanski-onecomp} (d) the singular set of $T$ has Lebesgue measure $0$, and $\mathbb R \smallsetminus (\Sigma \cup \Theta)$ is a union of disjoint open intervals. On each of those intervals the map is strictly monotonic (no critical points on the boundary) and smooth. Any doubly parabolic inner map is conservative and ergodic. None of those properties change when moving from $T$ to $S$.

There are only two (thus finite and nonempty) elements in $\mathcal{J}$ with parabolic points at $+\infty$ and $-\infty$. Those are one-sided sources, giving (iii).

To get (ii), observe that on every interval (between the points of $\Sigma$) the map $T$ is onto the whole real line (thus the set of images has one element). This we know from Thm. \ref{thm-ivriiurbanski-onecomp} (c), namely that the map is covering. Moving to $S$ preserves this property (the image of every subinterval is $(-\pi/2,\pi/2)$).

Again, by (d) we have that $T''/(T')^2$ is bounded. To show the same for $S$ we need to use word-by-word the same argument as in the proof by Ivrii and Urba\'nski (part $(c) \implies (d)$) giving (i).

It remains to prove (iv). 

We start by observing that this is true for the map $T$. To see that let us recall the formulas for $T$ and its derivative from Section \ref{sect-dpinnerfunctions} (note that $\beta =0$ for a doubly parabolic map)
	\begin{align*}
	T(x) &=x + \int_{\mathbb{R}} \frac{1 + x w}{w - x} \, d\rho(w),\\
	T'(x)&=1+\int_\mathbb{R}\frac{w^2+1}{(w-x)^2}\,d\rho(w),
	\end{align*}
where $\beta \in \mathbb{R}$, and $\rho$ is some finite positive singular measure on the real line.
	
Taking a bounded set of $x$'s, gives a positive lower bound on the integral. To translate this into the map $S$ we have to use the construction of $\varphi$.
A computation of the derivative gives 
\[S'(x)=\frac{1+t^2}{1+(T(pt))^2/p^2}T'(pt),\]
where we substituted $t=\tan(x)$. Simplifying and substituting $s=pt$ yields
\[S'(x) = \frac{p^2 + s^2}{p^2+T^2(s)}T'(s).\]
Recall, the definitions of $z^+$ and $z^-$. First, assume that $s>z^+$ or $s<z^-$. Then from the behaviour of the map we know that $T$ maps $s$ towards $0$, so $s^2>T^2(s)$ and the fraction in the formula for the derivative is bigger than 1 (for any $p$).

Now take $s\in(z^-,z^+)$, let $z_{\max}= \max(|z^-|,|z^+|)$. Let $R = \rho(\mathbb{R})$. We want to prove 
\[
\frac{p^2 + s^2}{p^2+T^2(s)}T'(s) \geq K>1.
\]
So it suffices to have the following.
\begin{equation}\label{eq:pf42}\tag{$\bigtriangleup$}
p^2\frac{T'(s)}{K}-1\geq T^2(s).
\end{equation}
Using the Jensen's inequality we have (for any function $u$)
\[\Big(\int u \,d\rho\Big)^2 = \Big(\int R\cdot u \,d\frac{\rho}{R}\Big)^2 \leq \int R^2\cdot u^2 \,d\frac{\rho}{R}= R\int u^2\,d\rho.\]
Utilising it to estimate the right-hand side of \eqref{eq:pf42}  gives
\begin{align*}
T^2(s)&= \Big(s+\int_{\mathbb{R}} \frac{1 + s w}{w - s} \, d\rho(w)\Big)^2= \Big(\int_{\mathbb{R}} \frac{s
}{R}+\frac{1 + s w}{w - s} \, d\rho(w)\Big)^2\leq\\
&\leq \int_{\mathbb{R}}R\Big(\frac{1+sw(1+1/R)-s^2/R}{w-s}\Big)^2d\rho(w)\leq \int_{\mathbb{R}}\frac{(\alpha w +\beta)^2}{(w-s)^2}d\rho(w),
\end{align*}
where $\alpha$ and $\beta$ depend only on $R$ and $z_{\max}$ (as $|s|\leq z_{\max}$). On the other hand, the left-hand side of \eqref{eq:pf42} may be bounded as (assuming $p\geq \sqrt{K}$) 
\begin{align*}
\frac{p^2}{K} \Big(1+\int_\mathbb{R}\frac{w^2+1}{(w-s)^2}\,d\rho(w)\Big) -1\geq 
\frac{p^2}{K}\int_\mathbb{R}\frac{w^2+1}{(w-s)^2}\,d\rho(w).
\end{align*}
Thus it suffices to choose $p$ so big such that $\frac{p^2}{K} (w^2+1) \geq (\alpha w +\beta)^2$ for all $w$ (which is trivial to do for quadratic maps). This ends the proof of (iv) and the entire lemma.
\end{proof}

\begin{thm}\label{thm-b-in} Let $ h\colon\mathbb{H}\to\mathbb{H} $ be a one component doubly parabolic inner function, and let $ T\colon\mathbb{R}\to \mathbb{R} $ be its  extension to $ \mathbb{R} $.
	 Denote its $\sigma$-finite invariant measure by $\mu$. Let $\nu$ be any Borel probability measure $\nu \ll \mu$.
	\begin{enumerate}[label={\em (\alph*)}]
		\item\label{thmb-a-in} {\em (Darling-Kac theorem)} Let $E\subset  \mathbb{R}$ be a bounded set of finite Lebesgue measure, and let $$ S_nE(x)= \# \left\lbrace 0\leq k\leq n-1: T^k(x)\in E \right\rbrace .$$ Then, as $n\to\infty$,
		\[\nu\left(\left\lbrace \frac\pi{\sqrt {2n}} S_nE\leq \mu(E) t \right\rbrace  \right) \to \frac2\pi \int_0^t e^{-\frac{y^2}\pi }dy, \hspace{0.5cm} t\geq 0.\]
		\item\label{thmb-b-in} {\em (Arcsine law for occupation times)} 
		Let $A$ be either $(z^+,+\infty)$ or $(-\infty, z^-)$, and let $$ S_nA(x)= \# \left\lbrace 0\leq k\leq n-1: T^k(x)\in A \right\rbrace .$$	Then, as $n\to\infty$,
		\[\nu\left(\left\lbrace \frac1n S_nA\leq t \right\rbrace  \right) \to \frac2\pi \arcsin\sqrt t, \hspace{0.5cm} t\in [0,1].\]
		\item\label{thmb-c-in} {\em (Arcsine law for waiting times)} Let $E\subset  \mathbb{R}$ be a bounded set of finite Lebesgue measure, and let $Z_nE(x)$ be the time of the last visit of the orbit of $x$ to $E$ up to time $n$.
		Then, as $n\to\infty$,
		\[\nu\left(\left\lbrace \frac1n Z_nE\leq t \right\rbrace  \right) \to \frac2\pi \arcsin\sqrt t, \hspace{0.5cm} t\in [0,1].\]
	\end{enumerate}
\end{thm}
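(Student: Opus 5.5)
The plan is to transfer everything through the conjugacy of Lemma \ref{lem:innerAFN}. Let $\varphi(x)=p\tan x$ and $S=\varphi^{-1}\circ T\circ\varphi$ be the basic AFN-system on $X=(-\pi/2,\pi/2)$ given there. Since $\varphi$ is an analytic diffeomorphism, the pull-back $\varphi^*\mu$ is the $\sigma$-finite $S$-invariant measure absolutely continuous with respect to Lebesgue on $X$; it is unique up to scaling by ergodicity. A probability $\nu\ll\mu$ on $\mathbb{R}$ pulls back to a probability $\varphi^*\nu\ll\varphi^*\mu$. Moreover, occupation times, last-visit times and first return times of $T$ to a set $E$ coincide with those of $S$ to $\varphi^{-1}(E)$. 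Hence it suffices to prove the three statements for $S$ with strong distributional convergence, that is, for every probability absolutely continuous with respect to the invariant measure. These are exactly Corollaries 8.1--8.3 of \cite{ThalerZweimuller}. What remains is to compute the parameters $\alpha,\beta$ and the normalising sequence $a_n$.

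\textbf{Step 1 (parameters).} Near $\pm\infty$ we have $T(x)=x-c/x+\dots$. Conjugating by $\varphi$ and expanding around $\pm\pi/2$ should give $S(y)=y+a|y\mp\pi/2|^{3}+o(|y\mp\pi/2|^3)$, so $p_l=2$ and $\alpha=1/2$. The two endpoints are symmetric: the map $x\mapsto -x$ sends the normal form $x-c/x$ to itself, and the invariant density $h_0=(\varphi^{-1})'$-pullback of Lebesgue behaves the same at both ends. Taking $\mathcal K$ to consist of one parabolic point, Theorem \ref{thm:2.9} then gives $\beta=1/2$, as in Remark \ref{cor:12}. I expect this step to need care: $h_0$ blows up at the parabolic endpoints, so ``$h_0(x_l)$'' must be read in the normalised sense used in \cite{ThalerZweimuller}. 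The symmetry argument is what I would rely on to see that the two contributions agree, which is what forces $\beta=1/2$.

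\textbf{Step 2 (Darling--Kac, (a)).} Take $E$ bounded with finite Lebesgue measure. Then $\mu(E)<\infty$ and $\varphi^{-1}(E)$ is bounded away from $\pm\pi/2$. By ergodicity and conservativity, every set of positive measure is sweep-out; if $\mu(E)=0$ the statement is trivial. Apply Corollary 8.1 of \cite{ThalerZweimuller} with $f=\mathbbm 1_E$, and use Remark \ref{cor:11}: $\mathcal M_{1/2}$ is the law of $|\mathcal N|$ with density $\frac2\pi e^{-y^2/\pi}$. The normalising sequence $a_n\sim\frac{4}{\pi}\,n/w_n(E)$ is computed through the wandering rate of the reference set $[p_1^-,p_1^+]$. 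Since wandering rates of different sweep-out sets are asymptotically equal for AFN-systems, this choice of reference set is harmless. From the proof of Theorem \ref{thm-a-in}\,(b),(c) we have $\mu(E\cap\{\tau_E>k\})=\mu(J_k^-\cup J_k^+)$, and summing gives $w_n\sim c'\sqrt n$. Here the exact constant matters, not just $\sim$ up to constants. I would obtain it from the precise asymptotics $p_k^\pm\sim\pm\sqrt{2ck}$ of the normal form, which yield $\mu(J_k^\pm)\sim\sqrt{c/(2k)}$ and hence $w_n\sim 2\sqrt{2c}\sqrt n$. Matching with the stated normalisation $\pi/\sqrt{2n}$ fixes the scaling of $\mu$ (equivalently $c$). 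This constant bookkeeping is the main obstacle, and I would check it against Boole's map, where $c=1$, as a sanity test.

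\textbf{Step 3 ((b) and (c)).} For (b), note that $\varphi^{-1}(A)$ differs from the full neighbourhood $A_{\mathcal K}=\varphi^{-1}((p_2^+,\infty))$ by the interval corresponding to $(z^+,p_2^+]$. That interval is bounded and so has finite measure, so $\mu(A\triangle A_{\mathcal K})<\infty$. Theorem \ref{thm:2.9} with $\beta=1/2$ then gives $\mathcal L_{1/2,1/2}$, which is the arcsine law by Remark \ref{cor:12}. For (c), apply Corollary 8.3 of \cite{ThalerZweimuller} to $\varphi^{-1}(E)$, again sweep-out, and use Remark \ref{cor:13}: $\mathcal Z_{1/2}=B(\tfrac12,\tfrac12)$, whose distribution function is $\frac2\pi\arcsin\sqrt t$.
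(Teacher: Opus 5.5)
Your proposal is correct and follows the paper's route: conjugate to the AFN-system of Lemma \ref{lem:innerAFN}, read off $p=2$ and $\alpha=\beta=\tfrac12$, and apply Corollaries 8.1--8.3 of \cite{ThalerZweimuller} via Remarks \ref{cor:11}--\ref{cor:13}. Your explicit computation $a_n\sim\sqrt{2n}/(\pi\sqrt{c})$ is more careful than the paper's, which only has $w_n\sim\sqrt n$ up to constants, and it rightly shows that the stated constant $\pi/\sqrt{2n}$ requires either $c=1$ (as for Boole's map) or a matching normalisation of $\mu$; one correction is that for $\mu(E)=0$ the claim is false rather than trivial (the left-hand side tends to $1$), so $\mu(E)>0$ must be assumed, just as Corollary 8.1 requires $\int f\,d\mu\neq 0$.
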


\begin{proof}
By the previous Lemma we know that $T$ is conjugated to an AFN map thus we may apply results from Subsection \ref{subsec:AFN}. By the behaviour of the map near parabolic points we know that $p=2$. It remains to apply Corollaries \ref{cor:11}, \ref{cor:12} and \ref{cor:13}.
\end{proof}

\subsection{Periodic points for doubly parabolic inner functions} The following propositions allow us to find periodic points for doubly parabolic inner functions, yielding explicit bounds for the period. We first focus on the so-called {\em Boole’s map} $ T(x)=x-\frac1x$,
which serves as a canonical model, and we then generalize the argument.
\begin{prop}{\bf (Periodic points for Boole's map)}\label{prop-periodic-Boole}
	Let $ T\colon\mathbb{R}\to \mathbb{R} $, $ T(x)= x-\frac1x $. Let $x\in \mathbb{R}$ and $ r\in(0, 1/4) $. Then, there exist a periodic point $ p\in (x-r, x+r) $ of period less than\[\left( \frac{-\ln r}{\ln2}+1\right) \cdot\frac{4}{r^2} + x^2.\]
\end{prop}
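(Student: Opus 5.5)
The plan is to produce a subinterval $L\subset (x-r,x+r)$ and an integer $N$ such that $T^N|_L$ is a monotone homeomorphism onto an interval containing $(x-r,x+r)$. The intermediate value theorem applied to $T^N(y)-y$ on $L$ then gives a fixed point of $T^N$ in $L$, i.e.\ a periodic point whose period divides $N$. So everything reduces to bounding the first time $N$ at which some iterate of $J_0\coloneqq(x-r,x+r)$, along a branch where all iterates are monotone, covers all of $\mathbb{R}$.

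The tools are the explicit formulas $T'(y)=1+1/y^2>1$ and $T(y)^2=y^2-2+1/y^2$. The second says that points with $|y|\ge 1$ drift towards $0$, with $y^2$ decreasing by roughly $2$ per step, while points in $(-1,1)$ are thrown to $|T(y)|\ge |1/y|-|y|$. Each branch $(-\infty,0)$ and $(0,+\infty)$ is mapped increasingly onto $\mathbb{R}$.

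\emph{Step 1 (reaching a bounded region).} If $|x|$ is large, the drift shows that after at most about $x^2/2$ iterates the image of $J_0$, whose length only increases, meets a fixed compact region, say $[-2,2]$. This step accounts for the additive $x^2$ term.

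\emph{Step 2 (doubling).} Let $J_k=T^k(J_0)$ and suppose $0\notin J_k$, so $T^{k+1}|_{J_0}$ is still monotone. I claim that an interval of length $\ell<1/2$ at bounded position either contains $0$ or at least doubles its length within $4/\ell^2$ further iterates. The argument uses the drift estimate: while the orbit stays in $|y|\le 2/\ell$, each step contributes a factor $1+1/y^2\ge 1+\ell^2/4$, and $(1+\ell^2/4)^{4/\ell^2}\ge 2$. If instead the orbit leaves $|y|\le 2/\ell$, it does so from $(-1,1)$, where the derivative is already at least $2$. Iterating this claim, starting from length $2r$, takes at most $-\ln r/\ln 2+1$ doublings, each costing at most $4/(\text{current length})^2\le 4/r^2$ steps. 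The interval then either has length comparable to $1$ or has contained $0$ at some time. This gives the factor $\bigl(\tfrac{-\ln r}{\ln 2}+1\bigr)\tfrac{4}{r^2}$.

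\emph{Step 3 (covering $\mathbb{R}$).} Once some $J_k$ contains $0$, I restrict to the sub-branch $J_k\cap(0,\varepsilon)$ or $J_k\cap(-\varepsilon,0)$. Its image is a half-line, say $(-\infty,c)$. Iterating, $c$ increases towards and across $0$; a uniformly bounded number of further steps yields a monotone branch onto $\mathbb{R}$, and these steps are absorbed into the constants. If instead $J_k$ reaches length of order $1$ near the origin, it contains $0$ after boundedly many steps. Pulling back along the monotone branches gives $L\subset J_0$ and $N$ as required.

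The main obstacle is Step 2: making the doubling time $4/\ell^2$ rigorous uniformly in the position of the interval. The problematic case is an orbit that makes long excursions to $|y|\gg 1$, where the expansion $1+1/y^2$ is negligible. I would handle it by splitting the orbit into excursions. After an excursion started from $|y_0|<1$, the expansion accumulated on the way back should be comparable to the gain at the departure point. Computing $\prod(1+1/y_k^2)$ along a return orbit using $y_{k+1}^2\approx y_k^2-2$ gives a product of size about $\sqrt{y_0^2/y_k^2}$, so a full excursion compensates itself. This bookkeeping is where the constant $4$ must be checked.
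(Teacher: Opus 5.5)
Your architecture matches the paper's: gain a factor $2$ whenever the image interval passes through $D=(-1,1)$, where $T'\ge 2$; bound return times with the drift $T(y)^2=y^2-2+1/y^2$; treat separately the moment an image contains $0$; and pay about $x^2$ steps to reach $D$. Closing with the intermediate value theorem on a monotone branch of $T^N$ onto $\mathbb{R}$ is fine, and cleaner than the paper's contraction/Banach argument. There are, however, two concrete problems.

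\emph{Step 2 is not proved, and your proposed remedy targets the wrong quantity.} Doubling itself is easy. What your iteration needs is that the doubled interval is again at bounded position. In both of your cases it may not be: after an excursion it sits near $\pm 1/\ell$, and after $4/\ell^2$ slow steps it can sit near $\pm 2/\ell$. You never need expansion during the excursion, because lengths never decrease ($T'>1$). So the bookkeeping of $\prod(1+1/y_k^2)$ is unnecessary.

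The missing observation is this. Suppose $J_k$ meets $(-1,1)$ and $J_{k+1}\not\ni 0$. Then $J_k$ cannot contain $\pm 1$, since $T(\pm1)=0$. Hence $J_k\subset(0,1)$ or $J_k\subset(-1,0)$, and so $|J_{k+1}|\ge 2|J_k|$. Moreover, the endpoint of $J_k$ farthest from $0$ is at distance at least $\ell=|J_k|$, so its image has modulus at most $1/\ell$. Since $T(y)^2\le y^2-1$ and the sign is preserved for $|y|>1$, that endpoint re-enters $[-1,1]$ within $1/\ell^2$ steps, and meanwhile the images stay in one half-line. This is exactly the paper's Lemma 4.4(c): points of $D\setminus D_r$ return to $D$ within $4/r^2$ steps. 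It bounds each round by $1+1/\ell^2$ steps, and it also covers your Step 1 re-entry.

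\emph{The claim in Step 3 that a uniformly bounded number of further steps suffices is false.} $J_k$ may first contain $0$ while its length is still of order $r$; for instance, take $x=1$, so that $J_1\ni 0$ with $|J_1|\approx 4r$. Then the longer half of $J_k$ is $(0,b)$ or $(-b,0)$ with only $b\ge r$ guaranteed. Its image is a half-line with finite endpoint $c$, where $|c|=1/b-b$ can be about $1/r$. By the same drift, $c$ needs of order $c^2$, that is up to about $1/r^2$, steps before the half-line crosses $0$ and yields a branch onto $\mathbb{R}$.

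The bound in the statement has explicit constants, so this cost cannot be ``absorbed.'' It is precisely what the extra $+1$ in $\bigl(\frac{-\ln r}{\ln 2}+1\bigr)$ pays for in the paper: the case $T^{n+1}(B)\supset(2/r+1,+\infty)$, handled by Lemma 4.4(b). Your overcount of doublings leaves enough slack for the stated bound to survive. You must still redo the accounting with this $O(1/r^2)$ term, together with the corrected round cost $1+1/\ell^2$, included.
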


\begin{lemma}{\bf (Mapping properties for Boole's map)}\label{lemma-mapping-boole}
		Let $ T\colon\mathbb{R}\to \mathbb{R} $, $ T(x)= x-\frac1x $. Let $D=(-1,1)$.
		\begin{enumerate}[label={\em (\alph*)}]
			\item\label{lemma-mapping-boole-a} For all $ x\in \mathbb{R} $, $ T'(x)>1 $, and for all $ x\in D $, $ T'(x)>2 $.
			\item\label{lemma-mapping-boole-b} For all $ x\in \mathbb{R} $, there exists $ n=0, \dots, \lfloor x^2 \rfloor$ such that $ T^n(x)\in D $. 
			\item For all $ r\in (0,1) $, let $D_r=(-\frac{r}{2},\frac{r}{2})$. For all $ x\in D\smallsetminus D_r$, there exists $ n=1, \dots,  \lfloor 4/r^2\rfloor $ such that $ T^{n}(x)\in D $.
			\item\label{lemma-mapping-boole-d} Let $B\subset \mathbb{R}$ be a compact interval. Then, $ T^{-1}(B) $ consists of two disjoint compact intervals, not containing $ 0, 1, -1 $.
		\end{enumerate}
\end{lemma}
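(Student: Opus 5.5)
The four items are elementary consequences of the explicit formula $T(x)=x-\frac1x$, $T'(x)=1+\frac1{x^2}$, and I will prove them in order.

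For \ref{lemma-mapping-boole-a}: $T'(x)=1+x^{-2}>1$ for every $x\neq0$. If $x\in D\smallsetminus\{0\}$, then $x^2<1$, so $T'(x)>2$.

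For \ref{lemma-mapping-boole-b}: I will track the squared modulus along the orbit. For $|x|\geq1$ we have
\[
T(x)^2=x^2-2+\frac1{x^2}.
\]
Since $x\mapsto x-1/x$ is increasing on $[1,\infty)$ and sends $1\mapsto0$, the point $T(x)$ lies in $[0,x)$ when $x\ge 1$ (and symmetrically for $x\le -1$). Hence each step with $|T^k(x)|\geq1$ satisfies $T^{k+1}(x)^2\le T^k(x)^2-1$. The reason is that $x^2\ge1$ gives $-2+1/x^2\le -1$.

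Therefore, as long as the orbit stays outside $D$, the quantity $x^2$ drops by at least $1$ per iterate. Starting from $x^2$, after at most $\lfloor x^2\rfloor$ steps the squared modulus falls below $1$, that is, the orbit enters $D$. The boundary points $\pm1$ map to $0\in D$, and the same counting still works for them.

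For (c): take $x\in D\smallsetminus D_r$, so $r/2\le|x|<1$. Then
\[
|T(x)|=\frac1{|x|}-|x|\le \frac2r .
\]
Either $T(x)\in D$, in which case $n=1$ works, or we apply \ref{lemma-mapping-boole-b} to $T(x)$. This gives $n\le 1+\lfloor T(x)^2\rfloor\le 1+4/r^2$.

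To reach the sharper bound $\lfloor 4/r^2\rfloor$, I will use the slightly better estimate $T(x)^2=1/x^2-2+x^2<4/r^2-1$, valid since $x^2<1$. Combined with the drop of at least $1$ per step, this gives the bound $n\le\lfloor 4/r^2\rfloor$. Making this off-by-one bookkeeping exact is the only delicate point in the lemma, and the $-2+x^2$ term should absorb it.

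For \ref{lemma-mapping-boole-d}: $T$ restricted to $(-\infty,0)$ and to $(0,\infty)$ is, in each case, an increasing analytic bijection onto $\mathbb{R}$ (its derivative is positive, and the limits at the endpoints are $\mp\infty$). Hence $T^{-1}(B)$ is the union of two compact intervals, one in each half-line, and they are disjoint. Neither contains $0$, because $0$ is not in the domain of $T$.

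The claim about $\pm1$ I read as saying that the preimage intervals avoid those points when $B$ is chosen appropriately. Indeed $T(\pm1)=0$, so $\pm1\notin T^{-1}(B)$ exactly when $0\notin B$. I would state the claim under that hypothesis, or interpret it accordingly in the later use, for instance when $B$ is a small interval away from $0$ in the argument for Proposition \ref{prop-periodic-Boole}. This interpretive point, rather than any computation, is the main obstacle here.
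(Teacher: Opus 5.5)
Your proof is correct, and on item (d) it is more accurate than the paper.

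Items (a) and (c) follow the paper's route. For (c) the paper likewise bounds $|T(x)|\le \frac2r-\frac r2$ on $D\smallsetminus D_r$ and then invokes (b), leaving the off-by-one implicit. Your estimate $T(x)^2<\frac4{r^2}-1$ does settle it. It gives $\lfloor T(x)^2\rfloor\le\lfloor 4/r^2\rfloor-1$, so (b) applied to $T(x)$ produces some $n\in\{1,\dots,\lfloor 4/r^2\rfloor\}$. This already covers the case $T(x)\in D$, so no case split is needed.

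For (b) you take a genuinely different route. The paper observes that from an integer $k$ it takes at most $k$ steps to fall below $k-1$, and sums $1+2+\dots+k\le k^2$. This shows the true order of the entry time (about $x^2/2$). However, it is phrased ``for large, positive $k$'' and, as written, does not literally yield $\lfloor x^2\rfloor$ for every real $x$. Your identity $T(x)^2=x^2-2+x^{-2}\le x^2-1$ for $|x|\ge1$ gives exactly the stated bound for all $x$ in one line. Well-definedness of the orbit is automatic, because iterates stay away from $0$ until they enter $D$.

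On (d) your objection is correct, and it points to an error in the paper rather than a gap in your argument. If $0\in B$ (for instance $B=[-1,1]$), then $\pm1\in T^{-1}(B)$, since $T(\pm1)=0$. The paper disposes of this part with ``analogously, taking into account that $T(1)=T(-1)=0$''. That reasoning only gives $\pm1\notin T^{-1}(B)$ under the extra hypothesis $0\notin B$, which is precisely your version. The other assertions (two disjoint compact intervals, one in each half-line, neither containing $0$) you prove as the paper does, via the two increasing branches of $T$ onto $\mathbb{R}$.

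Your corrected statement is also what the proof of Proposition~\ref{prop-periodic-Boole} really needs. For $m\ge2$ the interval $B_{m-1}$ is itself a pull-back of a bounded interval, so $0\notin B_{m-1}$ and hence $\pm1\notin B_m$ automatically. Only the first pull-back of a $B$ that contains $0$ is affected.
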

\begin{proof} Observe first that the image $T^n(x)$ of any given point is well-defined as long as the point is neither $0$, nor one of the countably many preimages of $0$. Note $T(D)=\mathbb R\backslash\{0\}$. Also, \[ T^2(0,1)=T(-\infty, 0)=\mathbb R\backslash\{0\},\]
	\[T^2(-1,0)=T(0,+\infty)=\mathbb R\backslash\{0\}.\]
	Since $ T $ is a proper map of degree 2, and we denote as follows the inverse branches:\[T_{1,1}\colon\mathbb{R}\longrightarrow \mathbb{R}_-\coloneqq \left( -\infty, 0\right) , \]
	\[T_{1,2}\colon\mathbb{R}\longrightarrow \mathbb{R}_+\coloneqq \left(0, +\infty\right). \] We also note that the positive (and negative analogously) can be covered by consecutive preimages (using only the branch going to the right half-line) of the set $(0,1]$. 
	
	We prove the different statements separately.
			\begin{enumerate}[label={(\alph*)}]
		\item The first item follows from the computation of the derivative: \[T'(x)=1+\frac1{x^2}.\]
		\item  Observe that from the behaviour of $T$ around infinity as a parabolic point in Section \ref{subsection-dp-around-infty} one cannot deduce the desired bound. However, since we have an explicit expression for the map, we can find the desired bound as follows. 
		
		We have that for large, positive $k$ the map needs at most $k$ steps to go from $k$ to $k-1$. More precisely, 
\[	k-1 \leq \min\{n: T^n(k)\leq k-1\}\leq k.\]
		Thus, it takes at most $$1+2+\dots+k=\frac{k^2+k}{2}\leq k^2$$ steps to get from $k$ to $(0,1)$. The proof is analogous for negative values of $ k$.
		\item  From the behaviour of the map near 0 we see that 
		$T(D_r)= (-\infty,-\frac{2}{r}+\frac{r}{2})\cup (\frac{2}{r}-\frac{r}{2},+\infty)$. Therefore, 
		\[ T(D\smallsetminus D_r)\subset \left[ -\frac{2}{r}+\frac{r}{2}, \frac{2}{r}-\frac{r}{2} \right] ,\] and applying {\em \ref{lemma-mapping-boole-b}}, we get the desired estimate.
		\item Since $T$ is a proper map of degree 2, $ T^{-1}(B) $ is compact. Moreover, $ T_{1,1} (B)\subset \mathbb{R}_-$ and  $ T_{1,2} (B)\subset \mathbb{R}_+$, so the two intervals are disjoint. It is left to show that they not contain 0, 1 nor $-1$. Indeed, if $ 0\in  T_{1,1} (B)$, then $ \infty \in B $, which contradicts the fact that $B$ is compact. One can prove the remaining part of the statement analogously, taken into account that $ T(1)=T(-1)=0 $.
	\end{enumerate}
\end{proof}

	\begin{figure}[htb!]\centering
	\includegraphics[width=10cm]{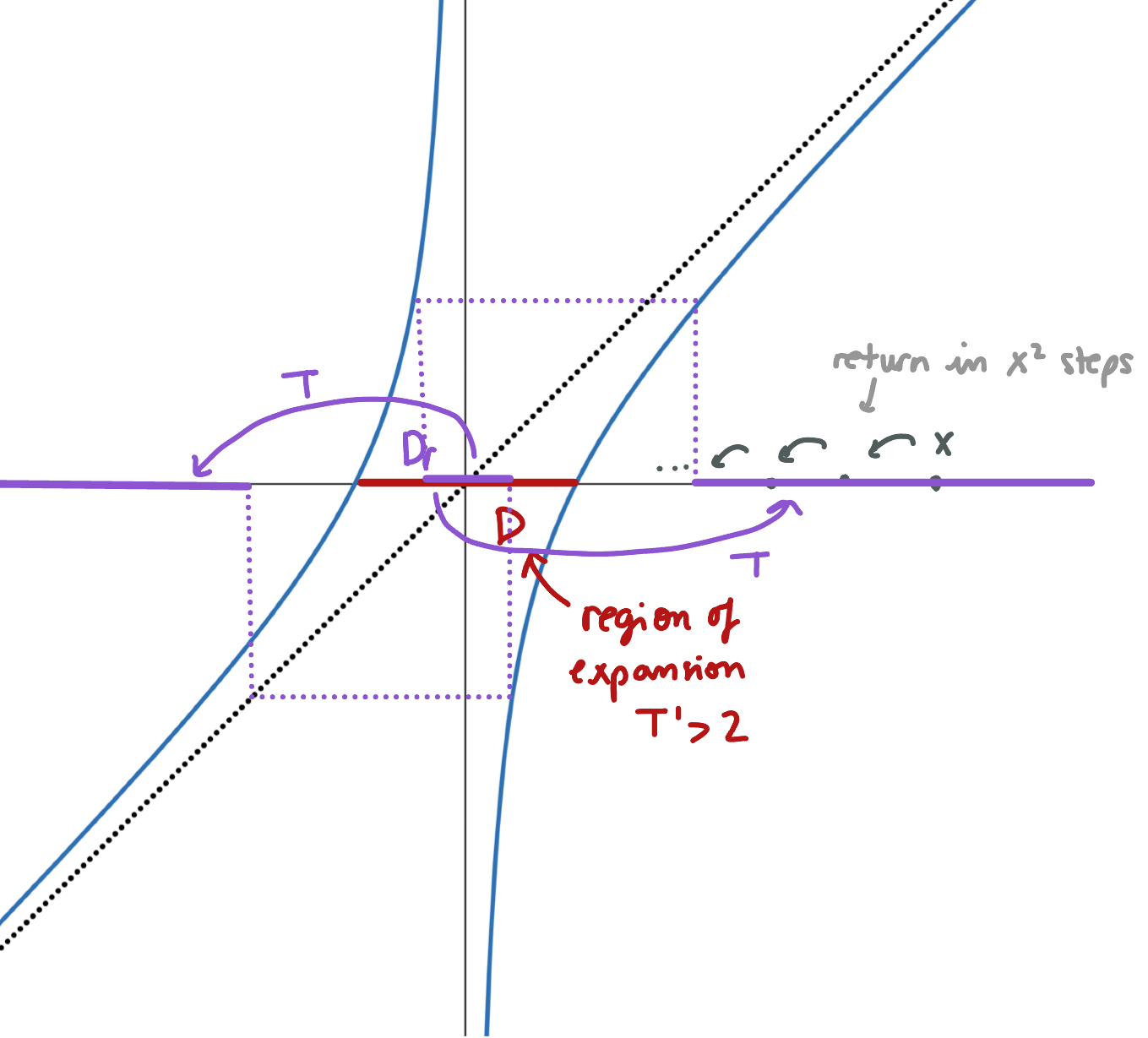}
	\caption{\footnotesize The points $ \dots<p_3^-<p_2^-<p_1^- $ and $ p_1^+<p_2^+<p_3^+<\dots $  for Boole's map $ T(x)=x-1/x $.}
	\label{fig-boole-periodic}
\end{figure}

\begin{proof}[Proof of Proposition \ref{prop-periodic-Boole}]
	The strategy of the proof is as follows. If we let $ B= (x-r, x+r)$, we want  to find the minimal $ N $ for which we can guarantee that there exists $ y\in B $, \[\#\left\lbrace n\leq N \colon T^n(y)\in D\right\rbrace>-\frac{\ln r}{\ln 2} \] and $ T^N(y)\in (x-r/2, x+r/2) $. We claim that then there is an $N$-periodic point in $B$,
	and, to complete the proof of the proposition, we will show that we can take 
	\[N=\Big\lfloor\frac{-\ln r}{\ln2}\cdot\frac{4}{r^2} + x^2\Big\rfloor.\]

First let us prove our claim on the existence of the $N$-periodic point in $B$. Consider the intervals $ B_m $, $ m=0, \dots, N $, defined as the connected component of $ T^{-m}(B) $ containg $T^n(y)$, $ n=N-m $. According to Lemma \ref{lemma-mapping-boole}{\em\ref{lemma-mapping-boole-d}}, all such intervals are well-defined, and $ 0,1,-1\notin B_m $, for all $m$. By the estimates on the derivative (Lemma \ref{lemma-mapping-boole}{\em\ref{lemma-mapping-boole-a}}), $ \left| B_{m} \right| <\left| B_{m-1} \right|$, and $2 \left| B_{m} \right| <\left| B_{m-1} \right|$ whenever $ T^n(y) \in D$ (note that if $ T^n(y) \in D$, since $ 0,1,-1\notin B_m $, then $ B_m \subset D$ and the derivative is greater than 2 for all points in $ B_m $). Since $ T^n(y) \in D $ at least $ \lfloor-\ln r/\ln 2 \rfloor$ times, and $ r\in (0,1/4) $, so $ \lfloor-\ln r/\ln 2 \rfloor>2$, we have  \[\left|  B_N\right| <\frac14\left| B\right| .\]

	Therefore, since $ T^N(y)\in (x-r/2, x+r/2) $, there exists an inverse branch of $ T^{N} $, say $ T_{N, B} $, well-defined in $B$ and $ T_{N, B} \colon B\to B$. Note that it is strictly contractive. By the Banach fixed point theorem, $ T_{N, B} $ has a fixed point in $B$, which is an $ N $-periodic point of $ T $.

Second, let us show that we can indeed take $ N $ as defined above. To start with
there are two possibilities, depending on the placement of the starting set $B$: either $x\in D$ or $x\notin D$. In the second situation we require an additional at most $x^2$ steps to enter $D$, so actually we can assume $x\in D$. We first assume that $ B\subset (0,1) $ (or analogously, $ B\subset (-1,0) $).

The goal is to show that, with $$N\leq \left(  \frac{-\ln r}{\ln 2}+1\right) \cdot \frac4{r^2},$$ then $ T^N(B)\supset (0,1)\supset B $ (and therefore there is $ y\in B $ such that $ T^N(y)\in (x-r/2, x+r/2) $). 

		As $ T'>1 $, then the diameters of the images of $B$ grow. Also those images are connected as long as they do not hit $0$, and this leads us to distinguish two cases. First, assume $ 0\notin T^n(B) $, for $ n=0, \dots, N' $, with \[N'=\Big\lfloor \frac{-\ln r}{\ln 2} \cdot \frac4{r^2} \Big\rfloor.\] Since $ 0\notin B $,  $B\cap (D\smallsetminus D_r) \neq \emptyset$, then the after at most $\lfloor 4/r^2 \rfloor$ steps we return to $D$. And because $T'|_D\geq 2$ the new image of $B$ within $D$ has radius at least $2r$, so it has to intersect $ D\smallsetminus D_r $. Proceeding in an inductive manner, by the choice of $N'$, the radius of $ T^{N'}(B) $ is at least 1, so either $ T^{N'}(B)\supset (0,1) $ or $ T^{N'}(B)\supset (-1,0) $. In any case, $ T^N(B)\subset (-1,1) $, as desired.
		
		Now assume otherwise that $ 0\in T^n(B) $, for some $ n\leq  N' $.  Then $ T^n(B) $ has to contain either $ (-r/2, 0) $ or $ (0, r/2) $  -- without loss of generality assume it is $ (-r/2, 0) $. Thus, $T^{n+1}(B)$ contains $(\frac{2}{r}+1,+\infty)$. By Lemma \ref{lemma-mapping-boole}{\em \ref{lemma-mapping-boole-b}}, the  image of this set covers $(0,1)$ after at most $4/r^2$ steps. Therefore, $ T^N(B)\supset (0,1) $ for \[N\leq N'+ \frac 4{r^2}=\left(  \frac{-\ln r}{\ln 2}+1\right) \cdot \frac4{r^2}.\]

In both cases we proved that $ T^N(B)\supset (0,1) $.

Now, assume that $ 0\in B $. Then, either $ B\cap (0,1) $ or $ B\cap (-1,0) $ have length greater that $r$, so we can apply the last argument and deduce the same bound also in this case. This ends the proof of the proposition.
\end{proof}

	Now we  turn to the general one component doubly parabolic inner functions. We want to rewrite the proof above with necessary changes.

\begin{prop}\label{prop-periodic-general}
{\bf (Periodic points for doubly parabolic inner functions)}
Let $ h\colon\mathbb{H}\to\mathbb{H} $ be a one component doubly parabolic inner function, and let $ T\colon\mathbb{R}\to \mathbb{R} $ be its radial extension to $ \mathbb{R} $. Let $x\in \mathbb{R}$ and $ r_0>0 $ small enough. Then, for all $ r\in(0, r_0) $, there exist a periodic point $ p\in (x-r, x+r) $ of period less than\[ \Big(\frac{-\ln r}{\ln K }+1\Big)\cdot \frac{W}{cr^2} +\frac{x^2}{c},\] where $ c>0$ and $ W, K>1 $ are constants that depend only on $h$.
\end{prop}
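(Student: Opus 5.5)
The proof will follow the one for Boole's map (Proposition \ref{prop-periodic-Boole}), replacing each explicit computation by an estimate valid for one component doubly parabolic inner functions. As there, set $B=(x-r,x+r)$. The plan is to find $N$ and an inverse branch $T_{N,B}$ of $T^N$ that is defined on $B$, maps $B$ into $B$, and is a strict contraction. The Banach fixed point theorem then gives an $N$-periodic point in $B$. The work is in establishing a general analogue of Lemma \ref{lemma-mapping-boole}. The set $D=(-1,1)$ is replaced by $D=(z^-,z^+)$, or more conveniently by a compact interval $E\supset D$ bounded by preimages of $\infty$, such as $[p_k^-,p_k^+]$. The point $0$ together with $\pm1$ is replaced by the set $\overline{\Theta}=\Theta\cup\Sigma$.

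\textbf{Step 1 (entry time into the central region).} Use the normal form $h(z)=z-c/z+\dots$ at $\infty$ from Section \ref{subsection-dp-around-infty}. For $|y|$ large, $|T(y)|\le |y|-c'/|y|$, so along an orbit $y_n^2$ decreases by at least about $2c'$ per step. A point $y$ therefore enters $E$ within at most $y^2/c+O(1)$ iterates. This gives the term $x^2/c$ and replaces Lemma \ref{lemma-mapping-boole}\emph{\ref{lemma-mapping-boole-b}}.

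\textbf{Step 2 (expansion).} We need $T'>1$ everywhere, with $T'\ge K>1$ on $E$. The formula $T'(x)=1+\int\frac{w^2+1}{(w-x)^2}d\rho(w)$ gives $T'>1$, with a uniform lower bound $K>1$ on bounded sets because $\rho\neq0$.

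\textbf{Step 3 (preimages of compact intervals).} Preimages of a compact interval avoid $\overline{\Theta}$. This holds because $T$ is a covering over a neighbourhood of $\mathbb{R}$ (Theorem \ref{thm-ivriiurbanski-onecomp}(c)), so every branch of $T^{-m}$ is well defined on any interval not containing $\infty$.

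\textbf{Step 4 (return from near a pole).} Fix a neighbourhood $D_r$ of $\Theta\cap E$. Near a simple pole $a$, $|T(y)|\sim b/|y-a|$, so points of $E\setminus D_r$ are mapped into $[-W'/r,W'/r]$. By Step 1 they return to $E$ within $W/(cr^2)$ steps.

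\textbf{Step 5 (covering argument).} Iterate $B$ forward. While $T^n(B)$ contains no point of $\overline\Theta$, each return to $E$ multiplies its length by at least $K$, and by Step 4 returns occur every $\lesssim W/(cr^2)$ steps. After about $-\ln r/\ln K$ returns, the image therefore has length comparable to that of a full hyperbolic branch. If instead some $T^n(B)$ contains a pole, the next image contains a half-line $(W'/r,\infty)$ or $(-\infty,-W'/r)$. This half-line maps onto a whole parabolic branch, hence onto a set containing $E$, within a further $W/(cr^2)$ steps. This accounts for the $+1$ in the factor $\bigl(\frac{-\ln r}{\ln K}+1\bigr)$. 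In the first case, once the image has macroscopic length it contains a complete hyperbolic branch, which $T$ maps onto $\mathbb{R}$ by Theorem \ref{thm-ivriiurbanski-onecomp}(c). In either case, some $N$ within the stated bound satisfies $T^N(B)\supset B$, with a point $y\in B$ such that $T^N(y)\in(x-r/2,x+r/2)$.

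\textbf{Step 6 (contraction).} Pull $B$ back along the orbit of $y$. By Step 3 every pullback is well defined, and by Step 2 the contraction factor is at most $K^{-\#\text{returns}}<1/4$ once $r<r_0$. So the branch maps $B$ into $(x-r,x+r)$ and is strictly contractive, and Banach's theorem gives the periodic point.

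\textbf{Main obstacle.} The hardest part is the macroscopic-growth step in Step 5. For infinitely many poles accumulating on $\Sigma$, a short interval may never reach length comparable to a fixed branch, since branches shrink near $\Sigma$. I would handle this by combining Adler's bound $|T''/(T')^2|\le C$ (bounded distortion) with the covering property: once the interval contains a pole it contains a full branch neighbourhood and hence covers $\mathbb{R}$. The aim is to guarantee, with controlled constants $W$ and $K$, that either a pole is hit or the expansion continues. Here the choice of $r_0$ small enters.
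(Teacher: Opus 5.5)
Your route is the paper's. Its proof transposes the Boole argument through Lemma~\ref{lemma-mapping-gen}, whose items (a)--(d) are your Steps 2, 1, 4 and 3, and then runs the same covering and contraction step. The gap is the step you yourself flag as the main obstacle: you leave it open, and the remedy you sketch would not close it.

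\textbf{The unresolved step.} Adler's distortion bound plays no role here. The claim that an interval containing a pole ``contains a full branch neighbourhood and hence covers $\mathbb{R}$'' is false: a short interval around an isolated pole $a$ contains no complete branch, and its image is only a neighbourhood $\{|t|>R\}$ of $\infty$. Likewise, ``after about $-\ln r/\ln K$ returns the image has length comparable to a full hyperbolic branch, hence contains one'' does not follow when branches shrink near $\Sigma$.

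\textbf{The missing idea: confinement.} This is implicit in the paper's ``worst-case scenario'' sentence, and appears in rough form in the Boole proof, where an interval in $D$ avoiding $0$ cannot reach length $1$. Take your $E=[p_k^-,p_k^+]$ with $k\ge 2$, and let $n_*$ be the first time $T^{n}(B)$ meets $\overline{\Theta}$.
\begin{itemize}
\item If $T^n(B)\ni p_k^\pm$, then $T^{n+k-1}(B)\ni p_1^\pm\in\Theta$. So for $n\le n_*-k$, each $T^n(B)$ lies entirely inside or entirely outside $E$, and all its points return to $E$ simultaneously.
\item At each visit $T^n(B)\subset E$, and its length, starting from $2r$, is multiplied by at least $K$. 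Hence there are at most $\log(|E|/2r)/\ln K$ visits before $n_*$.
\item At time $n_*$ there are two cases. If $T^{n_*}(B)$ contains two points of $\overline{\Theta}$, it contains a complete component of $\mathbb{R}\smallsetminus\overline{\Theta}$, because $\overline{\Theta}$ is closed and null. Theorem~\ref{thm-ivriiurbanski-onecomp}(c) then gives that $T$ maps this component onto $\mathbb{R}$.
\item Otherwise it contains exactly one point of $\overline{\Theta}$. This point is necessarily a pole $a$, since points of $\Sigma$ are accumulation points of $\Theta$. It also contains a one-sided segment at $a$ of length $\ge r$ free of $\overline{\Theta}$, whose image contains $(W'/r,+\infty)$ or $(-\infty,-W'/r)$.
\end{itemize}
This is exactly the dichotomy you wanted, with no distortion estimate needed.

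\textbf{Step 4.} The local asymptotics $|T(y)|\sim b/|y-a|$ near a single pole are not uniform when poles accumulate. Use instead the global bound from the representation formula, as in Lemma~\ref{lemma-mapping-gen}(c):
\[
|T(y)|\le |y|+|\beta|+\frac{(1+|y|M)\,\rho(\mathbb{R})}{\mathrm{dist}(y,\overline{\Theta})},
\qquad \operatorname{supp}\rho\subset\overline{\Theta}\subset[-M,M].
\]
At each visit, the midpoint of $T^n(B)$ is at distance at least $|T^n(B)|/2\ge r$ from $\overline{\Theta}$. It therefore returns to $E$ within $\lesssim W^2/(cr^2)$ steps, and by the synchronisation above the whole interval does too. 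The same bound at the midpoint of the one-sided segment gives the half-line in the pole case.

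\textbf{Step 6.} If a pole is hit early, the orbit of $y$ may visit $E$ only a few times, so $K^{-\#\mathrm{returns}}<1/4$ is not guaranteed. You do not need it. Run the covering argument on $(x-r/2,x+r/2)$, keeping track of a subinterval $J$ on which $T^N$ is a homeomorphism onto a set containing $\overline{B}$. Since $T'>1$ everywhere, the inverse branch maps $\overline{B}$ into $J\subset B$ and is a strict contraction. Alternatively, apply the intermediate value theorem to $T^N(y)-y$ on $J$.
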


Note that, essentially, this is telling that the asymptotic rate of increase of the period as $ r\to 0 $ is \[ \frac{-\ln r}{r^2} +x^2,\] which asymptotically is the same as for Boole's map.

\begin{lemma}{\bf (Mapping properties for doubly parabolic inner functions)}\label{lemma-mapping-gen} Let $ h\colon\mathbb{H}\to\mathbb{H} $ be a  doubly parabolic inner function, and let $ T\colon\mathbb{R}\to \mathbb{R} $ be its extension to $ \mathbb{R} $. Then, there exists an  open bounded interval $D\subset \mathbb{R}$ and a set $ D_r\subset D $, consisting of a collection of open intervals of length $ r $, such that:
	\begin{enumerate}[label={\em (\alph*)}]
		\item\label{lemma-mapping-gen-a} For all $ x\in \mathbb{R} $, $ T'(x)>1 $, and for all $ x\in D $, $ T'(x)\geq K $.
		\item\label{lemma-mapping-gen-b} For all $ x\in \mathbb{R} $, there exists $ n=0, \dots, \lfloor \frac{x^2}{c} \rfloor$ such that $ T^n(x)\in D $.
		\item\label{lemma-mapping-gen-c} For all $ r\in (0,r_0) $ and $ x\in D\smallsetminus D_r$, there exists $ n=1, \dots,  \lfloor W/cr^2\rfloor $ such that $ T^{n}(x)\in D $.
		\item\label{lemma-mapping-gen-d} Let $B\subset \mathbb{R}$ be a compact interval. Then, $ T^{-1}(B) \subset \mathbb{R}\smallsetminus\Sigma$ consists of disjoint compact intervals, not containing any preimage of $ \infty $.
	\end{enumerate}
The constants $ c>0 $ and $ W,K>1 $ depend only on $h$.
\end{lemma}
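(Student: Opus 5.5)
The plan is to mimic Lemma \ref{lemma-mapping-boole}, replacing the explicit formula $T(x)=x-1/x$ by the general representation
\[T(x)=x+\int_{\mathbb{R}}\frac{1+xw}{w-x}\,d\rho(w), \qquad T'(x)=1+\int_{\mathbb{R}}\frac{w^2+1}{(w-x)^2}\,d\rho(w),\]
and by the normal form $T(x)=x-c/x+\dots$ near $\pm\infty$ from Section \ref{subsection-dp-around-infty}. I would take $D=(z^-,z^+)$, the hull of the preimages of $0$; if needed I would enlarge it to $(p^-_k,p^+_k)$ for some fixed $k$, so that it contains a neighbourhood of $\overline{\Theta}$. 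The set $D_r$ would be the union of the $r$-neighbourhoods, intersected with $D$, of the preimages of $\infty$ lying in $D$. Since $T$ is unbounded near these points, this is the analogue of $(-r/2,r/2)$ for Boole's map. If $\Theta$ is infinite, $D_r$ must be read as a union of such intervals around the relevant poles.

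For (a), the inequality $T'>1$ is immediate from the formula, because $\rho\neq0$ (otherwise $T$ would be a translation, which is not doubly parabolic). On the bounded interval $D$ we have $(w-x)^2\le 2(w^2+x^2)$, which gives a uniform lower bound $\int\frac{w^2+1}{(w-x)^2}d\rho\ge \delta>0$, so $T'\ge K:=1+\delta>1$ on $D$. For (b), I would use the normal form $T(x)=x-c/x+O(x^{-2})$ for $|x|$ large. Then $T(x)^2\le x^2-2c+O(1/|x|)$, so $x^2$ decreases by at least $c$ at each step until the orbit reaches a fixed compact region, which is contained in $D$ after enlarging $D$ if necessary. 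This gives at most $\lfloor x^2/c\rfloor$ steps. The constant $c$ may have to be shrunk slightly to absorb the error term and the bounded part.

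For (c), I would use that near each pole $a\in\Theta\cap D$ we have $|T(y)|\lesssim 1/|y-a|$; in the finite-degree case this is clear from $b_k/(y-a_k)$. It follows that $T(D\smallsetminus D_r)\subset[-W'/r,W'/r]$, and then (b) gives a return time of at most $W/(cr^2)$. In the general one component case I would obtain this bound from the covering property, Theorem \ref{thm-ivriiurbanski-onecomp}(c), together with the distortion bound (d), which controls how fast $T$ blows up near $\overline\Theta$. For (d), I would again use Theorem \ref{thm-ivriiurbanski-onecomp}(c): $T$ is a covering over a neighbourhood of $\mathbb{R}\cup\{\infty\}$ minus $\Sigma$. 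Each branch interval therefore maps homeomorphically onto $\mathbb{R}$, so a compact $B\subset\mathbb{R}$ pulls back to disjoint compact intervals, one in each branch. These intervals avoid $\Theta$ because $\infty\notin B$, and they avoid $\Sigma$ because the components of $\mathbb{R}\smallsetminus(\Theta\cup\Sigma)$ are open intervals.

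The main obstacle is (c) when $\Theta$ is infinite and accumulates on $\Sigma$. There, a uniform bound $|T(y)|\lesssim 1/\mathrm{dist}(y,\Theta)$ must come from the Adler-type estimate $|T''/T'^2|\le C$. My first attempt would integrate $(1/T')'=-T''/T'^2$ on each branch, which gives $|1/T'|$ at most linear in the distance to the endpoint. Integrating once more then yields the required growth control, and hence the quadratic bound $W/(cr^2)$.
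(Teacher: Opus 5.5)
\textbf{Assessment.} Your parts (a), (b) and (d) are fine and follow the paper's lines. Your (a) and (b) are in fact more detailed than the paper's one-line justifications. Your treatment of (c) in finite degree is also correct.

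\textbf{The gap: part (c) when $\Theta$ is infinite.} The step you propose there runs in the wrong direction. Integrating $|(1/T')'|=|T''/T'^2|\le C$ from an endpoint where $1/T'\to 0$ gives $1/T'(y)\le C\,\mathrm{dist}(y,\text{endpoint})$. That is the \emph{lower} bound $T'(y)\ge 1/(C\,\mathrm{dist})$. Integrating again only yields a lower bound $|T(y)|\gtrsim\log(1/\mathrm{dist})$.

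What (c) needs is an \emph{upper} bound $|T|\lesssim 1/r$ on $D\smallsetminus D_r$, and Adler's condition cannot supply it. For instance, $y\mapsto -1/(y-a)^2$ near $a$ satisfies $T''/T'^2=-\tfrac32(y-a)^2$, which is bounded. Yet it grows like $\mathrm{dist}^{-2}$, and this would give return times of order $r^{-4}$ rather than $r^{-2}$. Adding the covering property does not repair this.

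\textbf{The missing idea.} The integral representation you wrote down at the start already gives the bound directly, for any doubly parabolic inner function. It needs neither the one-component hypothesis nor any distortion estimate. This is the paper's argument.

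The support of $\rho$ lies in $\overline{\Theta}\subset[-M,M]$. So for $x\in D\smallsetminus D_r$ one has $|w-x|\ge r$ on $\mathrm{supp}\,\rho$, and
\[
|T(x)|\le |x|+\int_{\mathbb{R}}\frac{1+|x||w|}{|w-x|}\,d\rho(w)\le |x|+\frac{(1+|x|M)\,\rho(\mathbb{R})}{r}\le\frac{W'}{r},
\]
since $D$ is bounded. This uses the structure of $T$ as $x$ plus a Cauchy-type integral of a \emph{finite} measure. That structure forces simple poles with summable residues, and it is exactly the information Adler's condition forgets. Once you have $T(D\smallsetminus D_r)\subset[-W'/r,W'/r]$, part (b) finishes (c) as you describe.
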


\begin{proof}
	For the map $ T\colon\mathbb{R}\to\mathbb{R} $, the orbit of a point may get truncated for two reasons. First, because it hits the set $\Sigma$ of singularities of $ h $ (which is a bounded set). Second, because the point eventually hits infinity. As before, we denote the set of preimages of $ \infty $ by $\Theta$, and  let  $D$ be the set $(z^-, z^+)$, where $z^-$ is the preimage of zero placed most to the left, and $z^+$ is the one placed most to the right.
	
		The set $D_r$ is defined be covering all neighbourhoods of preimages of $\infty$, i.e.
	\[D_r = \bigcup_{s\in\Theta} B(s,r).\]
	
		\begin{enumerate}[label={(\alph*)}]
		\item Following \cite[Proof of Lemma 9.2]{ivrii2023innerfunctionscompositionoperators}, the formula for a doubly parabolic inner function of the upper half plane is 
		\[T(x)=x+\int_{\mathbb R} \frac{1+xw}{w-x}d\rho(w),\]
		for some finite positive singular measure $ \rho $ on the real line. Then, \[T'(x)=1+\int_{\mathbb R} \frac{w^2+1}{(w-x)^2}d\rho(w),\] and the statement follows.
		\item  It follows from the development of $T$ around infinity as a parabolic point, as explained  in Section \ref{subsection-dp-around-infty}. 
	%	First observation was the estimate on the number of steps required to get from $N>M$ to $D$. For this we utilise the Taylor formula at $\infty$, which we have shown in the proof of Theorem A to be 	\[T(x) = x-\frac{c}{x}+\ldots, \qquad c>0.\]Thus, the number of steps to get from $N$ to $N-1$ is approximately equal to $\frac{N}{c}$ and the number of steps required for the orbit of $N$ to enter $D$ is at most $N^2/c$. 
		\item  Use the formula for $T(x)$ above. Since $\rho$ is a finite measure, $ \rho(\mathbb{R}) <\infty$. Take $ x\in D\smallsetminus D_r $, then the distance from $x$ to the support of $ \rho $ is at least $r$ (because the support of $ \rho $ is contained in $\Sigma\cup\Theta$, which is in turn  contained in a compact interval $ \left[ -M, M\right] $). Then, 		\[\left| T(x)\right| \leq \left|  x\right| + \frac{1+\left| x\right|M }{r} \cdot \rho(\mathbb{R}).\] Since $ x\in D $, which is bounded, then  $ \left| T(x)\right| \leq  \frac Wr $, and then the estimate follows from {\em \ref{lemma-mapping-gen-b}}. 
		\item It follows from the fact that the inner function is one component (so singular values do not accumulate on the real line), and infinity is a fixed point.
	\end{enumerate}

\end{proof}
\begin{proof}[Proof of Proposition \ref{prop-periodic-general}]
Proceeding just as in the proof of Proposition \ref{prop-periodic-Boole},
  let $ B= (x-r, x+r)$, and we want  to find the minimal $ N $ for which we can guarantee that there exists $ y\in B $ such that \[\#\left\lbrace n\leq N \colon T^n(y)\in D\right\rbrace>-\frac{\ln r}{\ln K} \] and $ T^N(y)\in (x-r/2, x+r/2) $. Then there is an $N$-periodic point in $B$ (note that now the expansion in $ D $ is by $K$ and $ r $ has been taken small enough),
so we only have to show that we can take $ N $ less than  \[\Big(\frac{-\ln r}{\ln K }+1\Big)\cdot \frac{W}{cr^2} +\frac{x^2}{c}\]
	We can assume $ B\subset D $, and prove that $ N $ can be taken less than $ (\frac{-\ln r}{\ln K }+1)\cdot \frac{W}{cr^2}  $ (otherwise, iterating $ n $ times, for $ n\leq \frac{x^2}c $, $ T^n(B)\subset D $). As for Boole's map, the \emph{worst-case scenario} is when $ T^n(B) $ does not hit any singularity nor a preimage of $ \infty $ (in the first case, $ T^{n+1}(B)=\mathbb{R} $; in the second, $ T^{n+1}(B) $ contains a neighbourhood of $\infty$, and we estimate the return using Lemma \ref{lemma-mapping-gen}{\em \ref{lemma-mapping-gen-b}}). But then we know that the iterates we hit $D$ $ \lfloor \frac{-\ln r}{\ln K }\rfloor $ times,  and we can bound the return times using Lemma \ref{lemma-mapping-gen}{\em \ref{lemma-mapping-gen-c}} 
\end{proof}

\section{Boundaries of doubly parabolic Baker domains}\label{section-boundariesBD}
\subsection{Partitioning the boundary. Targets}\label{targets}
Let $U$ be a doubly parabolic Baker domain.  Let $\varphi\colon\mathbb{D}\to U$ be a Riemann map, and let $g\colon\mathbb{D}\to\mathbb{D}$, $ g\coloneqq \varphi^{-1}\circ f\circ \varphi $, be the associated inner function. It is clear that $\varphi$ is a conjugacy between $f|_U$ and $g$. 

We aim to extend this conjugacy to the boundary, i.e. between $f|_{\partial U}$ and $g^*|_{\partial\mathbb{D}}$ (the radial extension of $g$). Let us note that {\em a priori} it is unclear how to proceed, since $\varphi\colon\mathbb{D}\to U$ does not extend continuously to $\partial\mathbb{D}$ (indeed, for the considered Fatou components, its boundary is never locally connected \cite{BakerDom}), and hence it does not provide a topological conjugacy between $f|_{\partial U}$ and $g^*|_{\partial\mathbb{D}}$. The way of showing that $\partial U$ is not locally connected is by showing that points in $\partial \mathbb{D}$ whose radial limit by $\varphi$ is infinity are dense on the unit circle. Precisely these infinitely many accesses to infinity is what makes a partition of the boundary specially well-behaved with respect to the radial extension of the inner function. Next we make this argument precise.

We consider the  radial extension of the Riemann map $\varphi^*\colon\partial\mathbb{D}\to \partial U$.
The following is a classical result concerning the relation between $\varphi^*$ and the topology of the boundaries of Fatou components of entire functions, which collects the results in \cite{BakerDom, Bargmann, JF23}.

Recall that for $\xi\in\partial\mathbb{D}$, the {\em cluster set} $ Cl(\varphi, \xi) $ of $ \varphi $ at $ \xi $ is the set of values $ w\in\widehat{\mathbb{C}} $ for which there is a sequence $ \left\lbrace z_n\right\rbrace _n \subset\mathbb{D}$ such that $ z_n\to \xi $ and $\varphi (z_n)\to w $, as $ n\to\infty $.
\begin{thm}
	{\bf (Cluster sets and radial limits for Baker domains)}\label{thm-all-cluster-sets-contain-infty}
	Let $ f $ be a transcendental entire function, and let $ U $ be  a doubly parabolic Baker domain. Let $ \varphi\colon\mathbb{D}\to U $ be a Riemann map. Then,
	\begin{enumerate}[label={\em (\alph*)}]
		\item 
		$ \{\xi\in\partial\mathbb{D}\colon \varphi^*(\xi)=\infty\} $
		is dense on $ \partial \mathbb{D} $;
		\item  $ \partial U $ is the disjoint union of cluster sets $ Cl(\varphi, \cdot)$ of $ \varphi $ in $ \mathbb{C} $, i.e.
		\[\partial U= \bigsqcup\limits_{ \xi\in\partial \mathbb{D}} Cl(\varphi, \xi)\cap\mathbb{C}.\]

		\noindent Moreover,           either $ Cl(\varphi, \xi)\cap\mathbb{C}  $ is  empty, or has at most two connected components.  If $ Cl(\varphi, \xi)\cap\mathbb{C}  $ is disconnected, then $ \varphi^*(\xi)=\infty $.
	\end{enumerate}
\end{thm}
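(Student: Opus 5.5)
Part (a) rests on the classical correspondence between $\varphi^*$ and $\varphi$ from \cite{BakerDom, JF23} and on the density of points with radial limit infinity. Choose a countable dense set of directions. Using Lindelöf-type arguments and the fact that $U$ has infinitely many accesses to $\infty$ \cite{BakerDom}, I will produce a dense set of $\xi\in\partial\mathbb{D}$ such that $\varphi$ has limit $\infty$ along a curve landing at $\xi$. By Lindelöf's theorem, this gives radial limit $\infty$.

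The cleaner route goes through the associated inner function. Since $U$ is a doubly parabolic Baker domain, the Denjoy--Wolff point $1$ satisfies $\varphi^*(1)=\infty$, because interior orbits converge to $\infty$ and, by the parabolic normal form, they converge to $1$ nontangentially (or along a curve). Iterated preimages of $1$ under $g^*$ have radial limit $\infty$ under $\varphi$, because $\varphi^*\circ g^*=f\circ\varphi^*$ and $f$ is entire, so $f(w)=\infty$ forces $w=\infty$. The backward orbit of any point under an inner function without an interior fixed point is dense on $\partial\mathbb{D}$ (the measure-theoretic ergodicity of $g^*$, or the fact that $\Theta$-type preimages accumulate everywhere). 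This gives (a). The technical care goes into justifying $\varphi^*(\xi)=\infty$ at preimages: at a point where $g$ extends, $g$ maps radii to curves landing nontangentially at $1$, and Lindelöf's theorem finishes.

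For (b), I will first show that every $w\in\partial U\cap\mathbb{C}$ lies in some $Cl(\varphi,\xi)$. This follows from compactness: take $z_n\in\mathbb{D}$ with $\varphi(z_n)\to w$; since $w\notin U$, the points $z_n$ accumulate on $\partial\mathbb{D}$. Disjointness is the heart of the matter. Suppose $w\in Cl(\varphi,\xi_1)\cap Cl(\varphi,\xi_2)$ with $\xi_1\ne\xi_2$. By (a), choose $\eta_1,\eta_2$ separating $\xi_1$ from $\xi_2$ with $\varphi^*(\eta_i)=\infty$. The image under $\varphi$ of the crosscut formed by the two radii to $\eta_1,\eta_2$ and $0$ is a curve through $\infty$ in $\widehat{\mathbb{C}}$, so it separates the plane into two domains. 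These domains contain the images of the two sectors, whose cluster sets near $\xi_1$ and near $\xi_2$ therefore lie in disjoint closed regions meeting only along the curve. Since the curve lies in $U\cup\{\infty\}$ and $w\in\mathbb{C}\setminus U$, this is a contradiction.

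The structure statement also comes from crosscuts. Take a sequence of pairs $\eta_1^k,\eta_2^k\to\xi$ from the dense set in (a), on either side of $\xi$. Then $Cl(\varphi,\xi)$ is the nested intersection of the closures of the images of the shrinking sectors, each bounded by a curve through $\infty$. The intersection of a nested sequence of continua in $\widehat{\mathbb{C}}$ is a continuum containing $\infty$ whenever $\infty$ lies in the limit. Removing the single point $\infty$ from a continuum can produce many components in general. Here, however, the sets near $\xi$ approach only through the two sides, left and right of the radius, so the cluster set minus $\infty$ splits into at most the two parts attached along each side. I expect this step to be the main obstacle, and I would rely on the prime-end/impression structure from \cite{JF23} for it. Finally, if $Cl(\varphi,\xi)\cap\mathbb{C}$ is disconnected, then $\infty\in Cl(\varphi,\xi)$ is essential. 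If the radial limit were a finite $w$, a small disk around $w$ together with the radius would force a connected cluster set by the Lindelöf/Collingwood principle (the radial limit being finite means the impression near it is connected). Hence $\varphi^*(\xi)=\infty$.
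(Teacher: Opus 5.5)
The paper does not prove this theorem; it quotes it from \cite{BakerDom, Bargmann, JF23}. Two parts of your argument are correct: the covering of $\partial U$ by cluster sets, and their disjointness. The Jordan curve $\varphi(R_{\eta_1})\cup\varphi(R_{\eta_2})\cup\{\infty\}$ you use is exactly the device the paper uses in the proof of Proposition~\ref{prop-targets}. The remaining steps have genuine gaps.

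\textbf{Part (a).} Everything rests on the density of iterated $g^*$-preimages of the Denjoy--Wolff point, and your justification fails.
\begin{itemize}
\item It is false that backward orbits are dense for every inner function without an interior fixed point. For $h(z)=2z-1/z$ on $\mathbb{H}$, $\infty$ is an attracting fixed point of the rational map. Its backward orbit therefore accumulates only on the Julia set, which is a Cantor subset of $\mathbb{R}$. For automorphisms the statement fails trivially.
\item Ergodicity of $g^*$ is no substitute. It concerns sets of positive measure, while a backward orbit is countable, hence null.
\item For the same reason, the a.e.\ identity $\varphi^*\circ g^*=f\circ\varphi^*$ says nothing at these points. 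Your pointwise fix is the right one: $f^n(\varphi(r\xi))=\varphi(g^n(r\xi))\to\infty$, and continuity of $f^n$ forces $\varphi(r\xi)\to\infty$. But it applies only at points where $g^n$ extends analytically.
\end{itemize}
You need an argument that genuinely uses the doubly parabolic (non-automorphic) hypothesis. One route: conservativity excludes Fatou arcs, so the reflected iterates are non-normal at every point of $\partial\mathbb{D}$; a Montel-type argument with the non-exceptional point $1$ then produces preimages at regular points. Alternatively, cite \cite{BakerDom}, as the paper does. Note also that the normal form at $1$ presupposes $1\notin\Sigma$, which the statement does not assume. Your orbit-curve-plus-Lindel\"of argument gives $\varphi^*(1)=\infty$ without that assumption.

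\textbf{Part (b).} The claims that $Cl(\varphi,\xi)\cap\mathbb{C}$ has at most two components, and that disconnection forces $\varphi^*(\xi)=\infty$, are not proved.
\begin{itemize}
\item The ``two sides'' picture is precisely what has to be shown. Your crosscut curves through $\infty$ only give nested closed discs with $\infty$ on their boundary. Such an intersection can meet $\mathbb{C}$ in any number of components.
\item Sectors with vertex $0$ shrink to the radius, not to $\xi$. The intersection of the closures of their images therefore also contains $\varphi([0,\xi))$, so you must truncate them.
\item Your last step treats only a finite radial limit, via a ``Lindel\"of/Collingwood principle'' that is not a theorem. It never excludes that the radial limit fails to exist, and existence is part of the conclusion.
\end{itemize}

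In fact, density of $\Theta$ plus planar topology cannot yield that conclusion. Build $U$ as follows.
\begin{itemize}
\item Take a thin corridor that runs out along $\{y=0,\,x\le 0\}$ to $x=-n_j$, crosses to $\{y=1\}$, and returns to $x\approx 0$.
\item It then turns and repeats this ever closer to the two half-lines, with $n_j\to\infty$.
\item Attach thin channels to $\infty$ densely along both walls; they leave through the gaps between consecutive passes into $\{x>0\}$.
\end{itemize}
This $U$ is simply connected with $\Theta$ dense. The cluster set at the end of the corridor meets $\mathbb{C}$ in two components, one containing each half-line. Yet the radial image must cross every cross-section of the corridor, so it goes back and forth and has no limit. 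Whatever \cite{JF23} uses for this part must therefore go beyond your crosscut argument.
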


As defined in the introduction, $ E\subset \partial U $ is a {\em target} on $\partial U$ if $ E=\varphi^*(I) $, for a closed arc $I\subset \partial \mathbb{D}$, delimited by  $\xi_1\neq  \xi_2\in\partial \mathbb{D}$ such that $ \varphi^*(\xi_1)=\varphi^*(\xi_2)=\infty$.

From Theorem \ref{thm-all-cluster-sets-contain-infty}, we deduce the following basic property of targets, which determines precisely when a point is in a target,  and we can codify the orbits of points in the boundary $\partial U$ by analizing the radial extension of the associated   inner function. 

\begin{prop}{\bf (Basic property of targets)}\label{prop-targets}
	Let $E$ be a target on $\partial U$, defined as  $ E=\varphi^*(I) \subset\partial U$, for an arc $I =\left[ \xi_1, \xi_2\right] \subset \partial\mathbb{D}$ $ \varphi^*(\xi_1)=\varphi^*(\xi_2)=\infty$. Then,
	\begin{itemize}
		\item $x\in E$ if and only if $ x\in Cl(\varphi, \xi) $, $\xi \in I$;
		\item if $ x\in Cl(\varphi, \xi) $, $f^n(x)\in E $ if and only if $(g^*)^n(\xi)\in I$.
	\end{itemize} 
\end{prop}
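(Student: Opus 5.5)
The plan is to derive both items from Theorem \ref{thm-all-cluster-sets-contain-infty}(b), the conjugacy $\varphi^*\circ g^*=f\circ\varphi^*$, and an ordering argument on $\partial\mathbb{D}$. Since $\partial U$ is the disjoint union of the sets $Cl(\varphi,\xi)\cap\mathbb{C}$, each $x\in\partial U$ determines a unique $\xi(x)\in\partial\mathbb{D}$ with $x\in Cl(\varphi,\xi(x))$.

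For the first item, I will read $E=\varphi^*(I)$ as the union of the cluster sets over $I$, which is what gives a partition of $\partial U$. The inclusion ``$\Leftarrow$'' is then immediate. For ``$\Rightarrow$'', suppose $x$ is attained in the target but $\xi(x)\notin I$. Disjointness of the cluster sets forces $x\notin Cl(\varphi,\xi)$ for every $\xi\in I$, a contradiction. The main point to check is that radial limits lie in the corresponding cluster set, so that $\varphi^*(\xi)\in Cl(\varphi,\xi)$ and no point of $E$ escapes the union. The endpoints $\xi_1,\xi_2$ do no harm: their radial limit is $\infty$, and only their finite cluster points matter.

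For the second item, I take $x\in Cl(\varphi,\xi)$. By the first item, it suffices to show that $f^n(x)\in Cl(\varphi,(g^*)^n(\xi))$, and by induction the case $n=1$ is enough. I would argue as follows. Pick $z_k\to\xi$ in $\mathbb{D}$ with $\varphi(z_k)\to x$. Then $f(\varphi(z_k))=\varphi(g(z_k))\to f(x)$ by continuity of $f$. Here the fact that $g$ extends analytically across the relevant arcs is the delicate issue. Where $\xi$ is not a singularity of $g$, we have $g(z_k)\to g^*(\xi)$, which gives $f(x)\in Cl(\varphi,g^*(\xi))$.

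The singular case is the expected main obstacle. My first approach would be to use the structure of cluster sets: each $Cl(\varphi,\xi)\cap\mathbb{C}$ is the impression of a prime end. The map $f$ sends the crosscuts defining the prime end at $\xi$ to curves in $U$ that, by the conjugacy, define the prime end at $g^*(\xi)$. This is the same argument as in \cite{JF23} for the well-definedness of the boundary extension, and I expect to cite it. Once $f(Cl(\varphi,\xi))\cap\mathbb{C}\subset Cl(\varphi,g^*(\xi))$ is established, the ``if and only if'' follows from the first item together with the disjointness of the partition.
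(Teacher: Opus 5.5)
Your argument at nonsingular points is fine. The singular case, which you rightly flag as the obstacle, is not handled by the argument you propose. Since the proposition is pointwise, your proof as written only covers those $\xi$ with $\xi,g^*(\xi),\dots,(g^*)^{n-1}(\xi)\notin\Sigma$.

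\textbf{Why the crosscut argument fails.} At $\xi\in\Sigma$ the cluster set of $g$ is all of $\overline{\mathbb{D}}$: for every $a\in\mathbb{D}$ the inner function $\frac{g-a}{1-\overline{a}g}$ has the same singularities, so $\liminf_{z\to\xi}|g(z)-a|=0$. Hence $g$ maps every crosscut neighbourhood of $\xi$ onto a dense subset of $\mathbb{D}$. Consequently $f=\varphi\circ g\circ\varphi^{-1}$ maps every crosscut neighbourhood of the prime end at $\xi$ onto a dense subset of $U$. So $f$ does not carry a null-chain at $\xi$ to a null-chain at $g^*(\xi)$, and a sequence $z_k\to\xi$ realizing $x$ may have $g(z_k)$ accumulating anywhere on $\partial\mathbb{D}$. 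Citing \cite{JF23} does not help: its Lemma 3.11 is the a.e.\ identity $\varphi^*\circ g^*=f\circ\varphi^*$ for radial limits, and says nothing about the rest of $Cl(\varphi,\xi)$.

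\textbf{The missing ingredient.} With your reading of $E$, the first item is true by definition. What the paper actually proves there is a separation statement: $\varphi(R_{\xi_1})\cup\varphi(R_{\xi_2})\cup\{\infty\}$ is a Jordan curve meeting $\partial U$ only at $\infty$. It puts the finite cluster sets over the interior of $I$ on one side and those over $\partial\mathbb{D}\smallsetminus I$ on the other. That separation is exactly what your second item lacks.

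\textbf{The paper's route.} The paper uses the separation by pulling back rather than pushing forward. The preimages under $f^n$ in $U$ of the delimiting curve of $E$ tend to $\infty$ at both ends, because $f$ is entire. So $f^{-n}(E)$ is again a union of targets, and the first item identifies the corresponding arcs with $(g^*)^{-n}(I)$.

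\textbf{Repairing your forward strategy.} The same separation also repairs your approach.
\begin{enumerate}
\item For $\eta\neq\zeta$, choose $\theta_1,\theta_2$ with $\varphi^*(\theta_i)=\infty$, one in each arc between $\eta$ and $\zeta$; this is possible by Theorem~\ref{thm-all-cluster-sets-contain-infty}(a).
\item The Jordan curve $\varphi(R_{\theta_1})\cup\varphi(R_{\theta_2})\cup\{\infty\}$ misses $\partial U\cap\mathbb{C}$ and separates $Cl(\varphi,\eta)\cap\mathbb{C}$ from $Cl(\varphi,\zeta)\cap\mathbb{C}$. Hence every connected subset of $\partial U\cap\mathbb{C}$ lies in a single cluster set.
\item If $\varphi^*(\xi)\neq\infty$, then $Cl(\varphi,\xi)\cap\mathbb{C}$ is connected by Theorem~\ref{thm-all-cluster-sets-contain-infty}(b). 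So its image under $f$ lies in one cluster set.
\item To identify that cluster set, take $r_k\to1$ with $\varphi(r_k\xi)\to x_1\in\mathbb{C}$. Then $\varphi(g(r_k\xi))=f(\varphi(r_k\xi))\to f(x_1)$ while $g(r_k\xi)\to g^*(\xi)$, so $f(x_1)\in Cl(\varphi,g^*(\xi))$.
\end{enumerate}
Only the radial limit of $g$ enters, so singularities of $g$ are irrelevant. Applied along the orbit, this covers every $\xi$ for which $\varphi^*\neq\infty$ and $g^*$ is defined at $\xi,\dots,(g^*)^{n-1}(\xi)$. That holds for $\lambda$-almost every $\xi$, which is what the applications need.
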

\begin{proof}
For the first statement,	denote by $R_{\xi_i}$ the radial segment at $\xi_i$, $i=1,2$. Then, $ R_{\xi_1}\cup R_{\xi_2}\cup \left\lbrace \infty\right\rbrace  $ is a Jordan curve through infinity, which divides the Riemann sphere into two sets. One contains the cluster sets of points in $I$, and the other the cluster sets of its complement. Then, the statement follows.

For the second statement, 	note that we can think of the target $E$ as being delimited by a Jordan arc $ \gamma $ ending at $\infty$ in both directions (that is, a Jordan curve through infinity in $\widehat{\mathbb{C}}$). The preimages (by $f^n$) of such an arc are curves landing at infinity from both ends (since, for entire functions, infinity has no finite preimage). We consider only the preimages of $ \gamma $ lying in $U$. Then,  $x\in f^{-n}(E) $, and $f^{-n}(E)$ is a collection of targets delimited by the previous arcs. Applying the first statement, it is clear that $ \xi $ lies in some interval which projects by $\varphi^*$ to some target of  $f^{-n}(E)$, so $ \xi\in (g^*)^{-n}(I) $.  Then, the statement follows immediately.
\end{proof}

Finally, recall that sometimes one needs to distinguish between finite and infinite targets, defined as follows.	Let $U$ be a doubly parabolic Baker domain, and let  $\psi\colon\mathbb{H}\to U$ be a conformal map from the upper-half plane to $U$, such that points in $\mathbb{H}$ converge to $\infty$ when iterating $h\coloneqq \psi^{-1}\circ f\circ\psi$. Let $I\subset \mathbb{R}\cup \left\lbrace \infty\right\rbrace $ be a closed  interval, delimited by  $x_1\neq  x_2\in \mathbb{R}$ such that $ \psi^*(x_1)=\psi^*(x_2)=\infty$. 
\begin{itemize}
	\item If $ I$ is bounded on $\mathbb{R}$, we say that $ \psi^*(I) \subset\partial U$ is a {\em finite target} on $\partial U$. 
	\item  If the complement of  $I$ is bounded on $\mathbb{R}$, we say that $ \psi^*(I) \subset\partial U$ is a {\em infinite target} on $\partial U$.
\end{itemize}

%\section{Boundaries of doubly parabolic Baker domains}

Now we are in the position of proving the theorems concerning boundaries of doubly parabolic Baker domains stated in the introduction. Mostly, the theorems follow from the analogous ones on doubly parabolic inner functions (via Proposition \ref{prop-targets}), although for Theorem \ref{thm-c}  a finer argument is required.

\subsection{Proof of Theorems  \ref{thm-a} and \ref{thm-b}}\label{subsect-proofAB}
By Proposition \ref{prop-targets}, Theorem  \ref{thm-a} follows from Theorem \ref{thm-a-in} and Theorem  \ref{thm-b} follows from Theorem \ref{thm-b-in}. The proofs are immediate. Note that Theorem  \ref{thm-b} holds not only for Baker domains of finite degree (as stated in the introduction), but also for  Baker domains of infinite degree whose associated inner function is one component. \hfill $ \square $

\subsection{Expanding Baker domains. Theorems \ref{thm-c} and \ref{thm-dt}}
In this section, we study {\em expanding Baker domains} of entire functions, which we defined in the introduction as the ones satisfying the following conditions.
\begin{enumerate}[label={(\alph*)}]
	\item $ U $ is doubly parabolic Baker domain, and the Denjoy-Wolff point of the associated inner function is not a singularity.
	\item  For every $x, y\in\partial U $ such that $ f(y)=x$ there exists a branch $ F_y$ of $ f^{-1} $ which is well-defined in $ D(x,r) $, and
	$F_y(D(x,r)\cap U)\subset U$.
	\item $ \left| f'(z)\right| >1 $, $ z\in D(x,r) $, for all $ x\in\partial U $; and for every finite target $ E $, $ \left| f'(z)\right| \geq K_E>1$, $ z\in D(x,r) $, for all $ x\in E$.
\end{enumerate}

Note that iterated inverse branches are well-defined and conformal along all backward orbits, as in the case of expanding basins of attraction.
\begin{lemma}{\bf (Inverse branches are well-defined)}\label{lemma-baker-inverse-branch}
	Let $f$ be a transcendental entire function, and let $U$ be an {expanding basin of attraction}. Let $\left\lbrace x_n\right\rbrace _n\subset \partial U$ be a backward orbit. Then, the inverse branches $ F_n $ of $f^n$, $F_n(x_0)=x_n$ are well-defined in $D(x_0,r)$.
\end{lemma}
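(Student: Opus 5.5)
The proof will be an induction on $n$: I will build $F_n$ as a composition of the one-step branches supplied by property (b) in the definition of expanding Baker domains (the statement says ``expanding basin of attraction'', but I read it as expanding Baker domain, with the same $r$). The difficulty is that property (b) gives each branch $F_y$ only on a disk $D(x,r)$ of fixed radius centred at a boundary point. So I must check that the image of $D(x_0,r)$ under the previously built branches stays inside the disk of radius $r$ around the next point of the backward orbit. Expansion, property (c), is exactly what guarantees this.

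\textbf{Base case and induction hypothesis.} Set $F_0=\mathrm{id}$. For $n=1$, apply (b) with $x=x_0$ and $y=x_1$; this gives a branch $F_{x_1}$ of $f^{-1}$ on $D(x_0,r)$ with $F_{x_1}(x_0)=x_1$, and I take $F_1\coloneqq F_{x_1}$. For the induction, suppose $F_n$ is a well-defined conformal branch of $f^{-n}$ on $D(x_0,r)$, with $F_n(x_0)=x_n$ and
\[ F_n(D(x_0,r))\subset D(x_n,r). \]
Then define
\[ F_{n+1}\coloneqq F_{x_{n+1}}\circ F_n, \]
where $F_{x_{n+1}}$ is the branch given by (b) on $D(x_n,r)$ sending $x_n$ to $x_{n+1}$. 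This composition is a branch of $f^{-(n+1)}$ sending $x_0$ to $x_{n+1}$.

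\textbf{Propagating the containment.} It remains to show $F_{x_{n+1}}(D(x_n,r))\subset D(x_{n+1},r)$. For this I use (c). The map $f$ sends $W\coloneqq F_{x_{n+1}}(D(x_n,r))$ conformally onto $D(x_n,r)$, so on $D(x_n,r)$ we have $|F_{x_{n+1}}'|=1/|f'\circ F_{x_{n+1}}|$. I would argue by a continuity (connectedness) argument on the radius. Let $\rho^*$ be the supremum of those $\rho\le r$ with $F_{x_{n+1}}(D(x_n,\rho))\subset D(x_{n+1},r)$; this supremum is positive. On such a disk, $|f'|>1$ on $D(x_{n+1},r)$ by (c), hence $|F_{x_{n+1}}'|<1$. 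Integrating along radii then gives $F_{x_{n+1}}(D(x_n,\rho))\subset D(x_{n+1},\rho)$, which is compactly inside $D(x_{n+1},r)$ whenever $\rho<r$. This forces $\rho^*=r$, and the containment holds for the full disk. Composing the two containments yields $F_{n+1}(D(x_0,r))\subset D(x_{n+1},r)$, which closes the induction. The resulting maps are conformal, being compositions of univalent maps. Property (b) also gives $F_n(D(x_0,r)\cap U)\subset U$ inductively, although that is not needed for the statement.

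\textbf{Main obstacle.} The delicate step is the contraction argument. Property (c) only bounds $|f'|$ on disks around boundary points, and $F_{x_{n+1}}$ is a priori defined merely as a univalent map on $D(x_n,r)$, so I must make sure its image cannot leave $D(x_{n+1},r)$ before the derivative estimate becomes available. The continuity-in-radius argument above is the first thing I would try. If it proves awkward, I would instead apply the Koebe/Schwarz-type estimate to $f$ on $D(x_{n+1},r)$ with $|f'|>1$, which gives $f(D(x_{n+1},s))\supset D(x_n,s)$ for $s\le r$, and then take inverses.
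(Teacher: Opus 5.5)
Your proposal is correct and follows the same route as the paper: an induction that composes the one-step branches given by property (b), with the expansion $|f'|>1$ on $D(x_{n+1},r)$ from property (c) ensuring $F_{x_{n+1}}(D(x_n,r))\subset D(x_{n+1},r)$ so that the next branch can be applied. Your continuity-in-radius argument makes rigorous the containment that the paper asserts in one line (the paper does not address that the derivative bound is only usable once the image is known to lie in $D(x_{n+1},r)$), and you are right to read ``expanding basin of attraction'' as ``expanding Baker domain''.
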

\begin{proof}
	The lemma follows easily from an inductive construction of the required inverse branches (compare with Lemma \ref{lemma-attr-inverse-branch}). Indeed, the existence of the first inverse branch $F_1$ is known by assumption. Note that $ F_1(D(x_0, r))\subset  D(x_1, r)$, since $ \left| f'(z)\right| >1 $, $ z\in D(x,r) $, for all $ x\in\partial U $. Since all inverse branches if $f$ are well-defined in $D(x_1, r)$, one deduces that $F_2$ is well-defined in $D(x_0, r)$. The construction of $F_n$ follows by induction.
\end{proof}

\subsection*{Proof of Theorem \ref{thm-c}}
Theorem \ref{thm-c} asserts that, given a  transcendental entire function $ f $, and $ U $  an expanding Baker domain, then  for $\widetilde{\omega_U}$-almost every backward orbit $ \left\lbrace x_n\right\rbrace _n $ and $r_0\in (0,r)$,  there exists $n_0\in\mathbb{N}$ and $ K>1 $ such that, for all $n\geq n_0$,
\[F_n(D(x_0, r_0))\subset D(x_n,  K^{-\sqrt[3]{n}} r_0),\] where $F_n$ is the branch of $f^{-n}$ sending $x_0$ to $x_n$, for some $ K>1 $. 

According to Lemma \ref{lemma-baker-inverse-branch}, we only need to prove the estimate of contraction, and it will follow from Theorem \ref{thm-a}{\em \ref{thmA.a}}. Indeed, Theorem \ref{thm-a}{\em \ref{thmA.a}} asserts, in particular, that for the finite target $E\subset\partial U$ where there is expansion by $ K_E >1$, 
 \[\lim\limits_n \frac{\# \left\lbrace 0\leq k\leq n-1: x_k\in E \right\rbrace }{{\sqrt[3]{n}}}\to\infty, \] for $ \widetilde{\omega_U} $-almost every backward orbit $ \left\lbrace x_n\right\rbrace _n\subset \partial U $. Therefore, having fixed a backward orbit $ \left\lbrace x_n\right\rbrace _n\subset \partial U $ satisfying the previous condition, we can take $ n_0\in\mathbb{N} $ such that for $n\geq n_0$,
  \[\# \left\lbrace 0\leq k\leq n-1: x_k\in E \right\rbrace > 2\sqrt[3]{n} .\]
Thus, at time $ n $, we contracted by the factor $K_E$ at least $ 2\sqrt[3]{n} $ times (and the remaining times $F_n$ do not increase distance), leading to the desired result (with $ K=K_E $).
\hfill $ \square $

\subsection*{Proof of Theorem \ref{thm-dt}}
%Theorem \ref{thm-d} asserts that, for an entire function $ f $, an expanding Baker domain $ U $ of finite degree, $x\in\partial U$ and $ r>0 $, for all $ r>0 $, there exist a periodic point $ p_r\in D(x,r) \cap \partial U$ and the period of $ p_r $ increases asymptotically as 
%\[ \frac{-\ln \omega_U(D(x,r))}{\omega_U(D(x,r))^2},\] as $ r\to 0 $.

The proof follows straight from Proposition \ref{prop-periodic-general} (and using the same notation).  

The fact that the periodic point that we found lies on $ \partial U $ follows from the assumption (b) in the definition of expanding Baker domains, and ends the proof of the statement.
\hfill $ \square $

\subsection{Examples of expanding Baker domains of any degree}\label{subsect-examples}
To end this section, let us provide examples of expanding Baker domains of any degree, inspired by the example $ f(z)=z+e^{-z} $ (studied in \cite{BakerDom, FH06, FagellaJove}, and in the next section), and the construction in \cite{FH06}.

Let \[ f\colon \mathbb{C}\to \mathbb{C}\hspace{0.5cm} f(z)=z-{P(e^{-z})},\] where $P(z)$ is a polynomial (to be determined). Note that $f(z)$ projects to a map of the punctured plane $$ F\colon \mathbb{C}^*\to \mathbb{C}^*\hspace{0.5cm}  F(w)=we^{P(w)} ,$$ via $w=e^{-z}$, i.e. the following diagram commutes.
\[ \begin{tikzcd}
	 \mathbb{C} \arrow{r}{f(z)} \arrow[swap]{d}{w=e^{-z}} &  \mathbb{C} \arrow{d}{{w=e^{-z}}} \\%
	 \mathbb{C}^* \arrow{r}{F(w)}&  \mathbb{C}^*
\end{tikzcd}
\]

Observe that, for any polynomial $P$, $F(0)=0$. Moreover, if $P$ is chosen so that $F'(0)=1$, then 0 is a parabolic fixed point, and its parabolic basin lifts to countably many doubly parabolic Baker domains, of the same degree as the parabolic basin. Since 
$ F'(0)=e^{P(0)}$, we will choose $P$ so that $ P(0)=0 $.

Note that $F$ is a map of finite type (i.e. with only finitely many singular values). It has one (finite) asymptotic value, 0, which is fixed under $F$. Critical points of $F$ satisfy the equation
\[F'(w)=e^{P(w)}(1+wP'(w))=0.\]
Then, for any $n\geq 1$, we take $P(w)$ to satisfy \[1+wP'(w)=(z-1)^n,\]i.e. so that the function $F$ has only one critical point (placed at 1), of multiplicity $n$.  We remark that the critical point is in the basin (since basins always contain a singular value).

The later equation, together with $ P(0)=0 $, determines unambiguously the polynomial $P$. 
Note that, for $n=1$ we recover the maps $f(z)=z+e^{-z}$ and $ F(w)=we^{-w} $; for $n=2$, $f(z)=z-\frac{e^{-2z}}{2}+2e^{-z}$ and $ F(w)=we^{\frac{w^2}{2}-2w} $; and so on.

Next we prove that these Baker domains are indeed expanding. For simplicity, since $f$ has always countably many Baker domains (which are the lifts of the parabolic basin), we restrict ourselves to the one containing the positive real axis (which is forward invariant -- note that its projection by $w=e^{-z}$ is the interval $ (0,1) $ which is easy to check that it is forward invariant).
\begin{prop}
The Baker domain of the function $f$ is expanding.
\end{prop}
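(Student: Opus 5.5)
We verify the three conditions (a)--(c) in the definition of expanding Baker domains for $f(z)=z-P(e^{-z})$, where $U$ is the Baker domain containing the positive real axis. The plan is to work entirely in the projected coordinate $w=e^{-z}$. There, $U$ is a lift of the immediate parabolic basin $\mathcal{A}$ of $0$ for $F(w)=we^{P(w)}$. Distances and derivatives are transferred by the local isometry relations $f'(z)=F'(w)\,e^{-z}/w\cdot(\ldots)$, or more simply by the identity $f'(z)=1+e^{-z}P'(e^{-z})=1+wP'(w)=(w-1)^n$, which follows directly from the chain rule and the defining equation of $P$.

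\textbf{Condition (a).} Since $F'(0)=1$ and $F(w)=w+P'(0)w^2+\dots$ with $P'(0)=(-1)^n\neq 0$, the point $0$ is a parabolic fixed point with a single attracting petal. Its basin $\mathcal{A}$ contains $(0,1)$ and therefore the critical point $1$, or at least accumulates on it. The plan is to show that $F|_{\mathcal{A}}$ is a proper map of degree $n+1$ whose only critical value lies in $\mathcal{A}$. Then $\mathcal{A}$ is simply connected, and the lift $U$ is a Baker domain. The map $f|_U$ is conjugate to a covering of $F|_{\mathcal{A}}$, so $U$ is doubly parabolic, because the petal structure at $\infty$ for $f$ is a translation-like parabolic petal. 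The associated inner function has Denjoy--Wolff point corresponding to the single access $\mathrm{Re}\,z\to+\infty$. That point is not a singularity, because near it the inner function is a lift of the finite Blaschke product associated to $F|_{\mathcal{A}}$. For this step I would follow \cite{FH06}.

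\textbf{Conditions (b) and (c).} The key formula is $|f'(z)|=|w-1|^n$ with $w=e^{-z}$. So (c) reduces to showing that $\partial\mathcal{A}\setminus\{0\}$ stays at distance $>1$ from $1$, with a uniform bound $|w-1|\geq \kappa_E>1$ for $w$ in the projection of a finite target. Finite targets project to compact subsets of $\partial\mathcal{A}\setminus\{0\}$. A compact set avoiding the closed disk $\overline{D(1,1)}$ has $|w-1|\ge\kappa_E>1$ on it, and the same holds on an $r$-neighbourhood, after shrinking $r$ via a uniform comparison of $|e^{-z}|$ on discs. To get the disk separation I would show $\overline{D(1,1)}\setminus\{0\}\subset\mathcal{A}$, by checking that $F$ maps $D(1,1)$ into itself. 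Near $0$ this is the attracting petal direction. Concretely, I would try to prove $|F(w)-1|<1$ for $|w-1|<1$ via the maximum principle on the boundary circle $w=1+e^{i\theta}$, using the explicit form $P(w)=\int_0^w\frac{(s-1)^n-1}{s}\,ds$. This estimate is the main obstacle, and I expect it to need a case-by-case computation. If it fails for large $n$, I would replace $D(1,1)$ by a smaller $F$-invariant region containing $1$ and bound $|w-1|^n$ from below on its complement.

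The remaining half of (c), namely $|f'|>1$ near every boundary point, including points near $0$ in $w$ (i.e.\ $\mathrm{Re}\,z\to+\infty$ on $\partial U$), follows since there $|w-1|\to1$ from above along the repelling directions of the parabolic point. Repelling directions lie outside the petal, and the petal is tangent to the disk, so we get $|w-1|>1$.

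For (b), $f$ has no finite asymptotic values: $0$ is omitted in $w$-coordinates, so on the plane the only singular values of $f$ are lifts of $F(1)$, the critical value, which lies in $\mathcal{A}$ and hence the lifted critical values lie in Fatou components. Thus the postsingular set is at positive distance from $\partial U$ in a uniform manner, using that the critical orbit converges to $0$, i.e.\ to $\mathrm{Re}\,z\to+\infty$ inside $U$, tangentially to the petal. All branches of $f^{-1}$ therefore extend to discs $D(x,r)$, $x\in\partial U$, with uniform $r$ by periodicity in $\mathrm{Im}\,z$. The inclusion $F_y(D(x,r)\cap U)\subset U$ holds because $y\in\partial U$ and $U$ is completely invariant among the relevant component.
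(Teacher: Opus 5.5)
Your outline matches the paper's proof. You get (a) from the conformal conjugacy $w=e^{-z}$ between $f|_U$ and the finite-degree parabolic basin map $F|_{\mathcal A}$. You get (b) from the postsingular set lying on the lines $\mathbb{R}+2k\pi i$. You get (c) from $|f'(z)|=|1-e^{-z}|^n$ together with the fact that $D(1,1)$ (in the $w$-plane) lies in the Fatou set.

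\textbf{The main gap.} You leave that last fact unproved, and your fallback cannot work. Since $|f'|=|1-w|^n$, condition (c) fails as soon as $\partial\mathcal A$ meets $\overline{D(1,1)}\setminus\{0\}$. So no smaller invariant region helps: you must show $F(\overline{D(1,1)}\setminus\{0\})\subset D(1,1)$.

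Your maximum-principle idea does this uniformly in $n$, with no case analysis. First fix the normalization to $1+wP'(w)=(1-w)^n$. With $(w-1)^n$, $P$ is a polynomial only for even $n$; also $P'(0)=-n$, not $(-1)^n$. Then $P(w)=\sum_{k=1}^n\frac{(1-w)^k-1}{k}$. On the circle write $w=1-e^{i\epsilon}$ with $0<\epsilon<2\pi$. Using $|F-1|<1\iff \mathrm{Re}(1/F)>\tfrac12$, the condition becomes
\[
\sin\Bigl(\tfrac{\epsilon}{2}+S_n(\epsilon)\Bigr)>\sin\Bigl(\tfrac{\epsilon}{2}\Bigr)\exp\Bigl(\sum_{k=1}^n\tfrac{\cos k\epsilon-1}{k}\Bigr),\qquad S_n(\epsilon)\coloneqq\sum_{k=1}^n\frac{\sin k\epsilon}{k}.
\]
By the symmetry $\epsilon\mapsto 2\pi-\epsilon$ it suffices to treat $\epsilon\in(0,\pi]$.
\begin{itemize}
\item At $\epsilon=\pi$ the left side is $1$ and the right side is $<1$.
\item On $(0,\pi)$ it follows from the classical bounds $0<S_n(\epsilon)<\pi-\epsilon$ (the lower one is the Fej\'er--Jackson--Gronwall inequality). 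These give $\epsilon/2<\epsilon/2+S_n(\epsilon)<\pi-\epsilon/2$, and the exponential factor is at most $1$.
\end{itemize}
Hence $|F-1|<1$ on $\partial D(1,1)\setminus\{0\}$. The maximum principle then gives the invariance, so $\overline{D(1,1)}\setminus\{0\}\subset\mathcal A$. Your tangency argument near $w=0$ then finishes the first half of (c). The paper asserts the same invariance via $|f'|<1$ pulling points towards the invariant rays; completed as above, your route is the more explicit of the two.

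\textbf{Two further steps need repair.}

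For (b), the justification ``$U$ is completely invariant'' is false. $f^{-1}(U)$ has other components, namely lifts of the other components of $F^{-1}(\mathcal A)$. So a component of $D(x,r)\cap U$ could a priori be pulled back by $F_y$ into such a component $V$ with $y\in\partial V$. Excluding this is exactly what the paper gets from \cite[Technical Lemma]{JF23}.

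For the finite-target half of (c), finite targets do not project to compact sets. They are delimited by accesses with $\mathrm{Re}\,z\to-\infty$, i.e.\ $w\to\infty$. What is true, and sufficient, is the following. The two delimiting curves tend to $\mathrm{Re}\,z\to-\infty$ and separate the target from the dynamical access $\mathbb{R}_+$. So the target lies in a half-plane $\{\mathrm{Re}\,z\le M\}$, and its projection stays away from $w=0$. Near $w=\infty$ the bound $|w-1|\ge\kappa_E$ is automatic. This is the paper's argument via the exponential growth of $|f'|$ as $\mathrm{Re}\,z\to-\infty$.
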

\begin{proof}
	We shall check that the three items in the definition of expanding Baker domain hold. First, the fact that it is doubly parabolic is immediate from the argument above (being the lift of a parabolic basin), and since it is of finite degree, the Denjoy-Wolff point is not a singularity.
	
	Since the postsingular set (in this case, the orbit of the critical values) is contained in the lines $\mathbb{R}+2k\pi i$, $k\in \mathbb{Z}$ it is easy to see that there exists $r>0$ such that for every $x, y\in\partial U $ with $ f(y)=x$ there exists a branch $ F_y$ of $ f^{-1} $ which is well-defined in $ D(x,r) $. The condition 	$F_y(D(x,r)\cap U)\subset U$ follows directly from \cite[Technical Lemma]{JF23}.

It is left to prove the third item, related with $ \left| f'\right| $. Note that
\[f'(z)=1+P'(e^{-z})e^{-z}=1+ \frac{(1-e^{-z})^n-1}{e^{-z}}e^{-z}=(1-e^{-z})^n.\]
Hence, $ \left| f'\right| >1 $ if and only if $ \left| 1-e^{-z} \right| >1$. We claim that the set $  \left\lbrace z\in\mathbb{C}\colon \left| 1-e^{-z} \right| <1\right\rbrace  $ is forward invariant (and thus contained in the Fatou set). Indeed, each of the regions in the previous set contains a ray $\mathbb{R}_++2k\pi i$, $k\in \mathbb{Z}$, which is forward invariant. Moreover, since $ \left| f'(z)\right| <1 $, points in  $  \left\lbrace z\in\mathbb{C}\colon \left| 1-e^{-z} \right| <1\right\rbrace  $ are mapped closer to $\mathbb{R}_++2k\pi i$, proving the forward invariance of such set. Hence, it is clear that there exists $ r>0 $ such that 
$ \left| f'(z)\right| >1 $, $ z\in D(x,r) $, for all $ x\in\partial U $. 

We have to see that for every finite target $ E $, $ \left| f'(z)\right| \geq K_E>1$, $ z\in D(x,r) $, for all $ x\in E$. We restrict ourselves to the Baker domain containing $\mathbb{R}_+$. Note that the access to infinity which corresponds to $ \infty $ when computing the inner function in the upper-half plane (i.e. the dynamical access) is the one given by $\mathbb{R}_+$. All other accesses to infinity land to $ -\infty $ (note that, otherwise, they would correspond to other accesses to the parabolic point, and, for entire functions, points in $ \mathbb{C} $ are uniquely accessible from a Fatou component \cite[Thm. 3.14]{Bargmann}). Since $  \left| f'(z)\right| $ grows to infinity exponentially fast when $ \textrm{Re }z\to-\infty $, the existence of $ K_E $ for the finite target $ E $ follows.
\end{proof}
\subsection{
\texorpdfstring{The Baker domain of $f(z)=z+e^{-z}$ and Boole's map. Theorem \ref{thm-e}}
{The Baker domain of f(z)=z+exp(-z) and Boole's map. Theorem \ref{thm-e}}}\label{sect:exp}
Finally, we apply our results to the doubly parabolic
Baker domain of degree two of the function $ f(z)=z+e^{-z} $, studied in \cite{BakerDom, FH06,FagellaJove}.  Note that such Baker domain is expanding (in fact, it is the example in Sect. \ref{subsect-examples}, with $n=1$), so Theorems \ref{thm-c} and \ref{thm-dt} apply.

\begin{figure}[htb!]\centering
	\includegraphics[width=10cm]{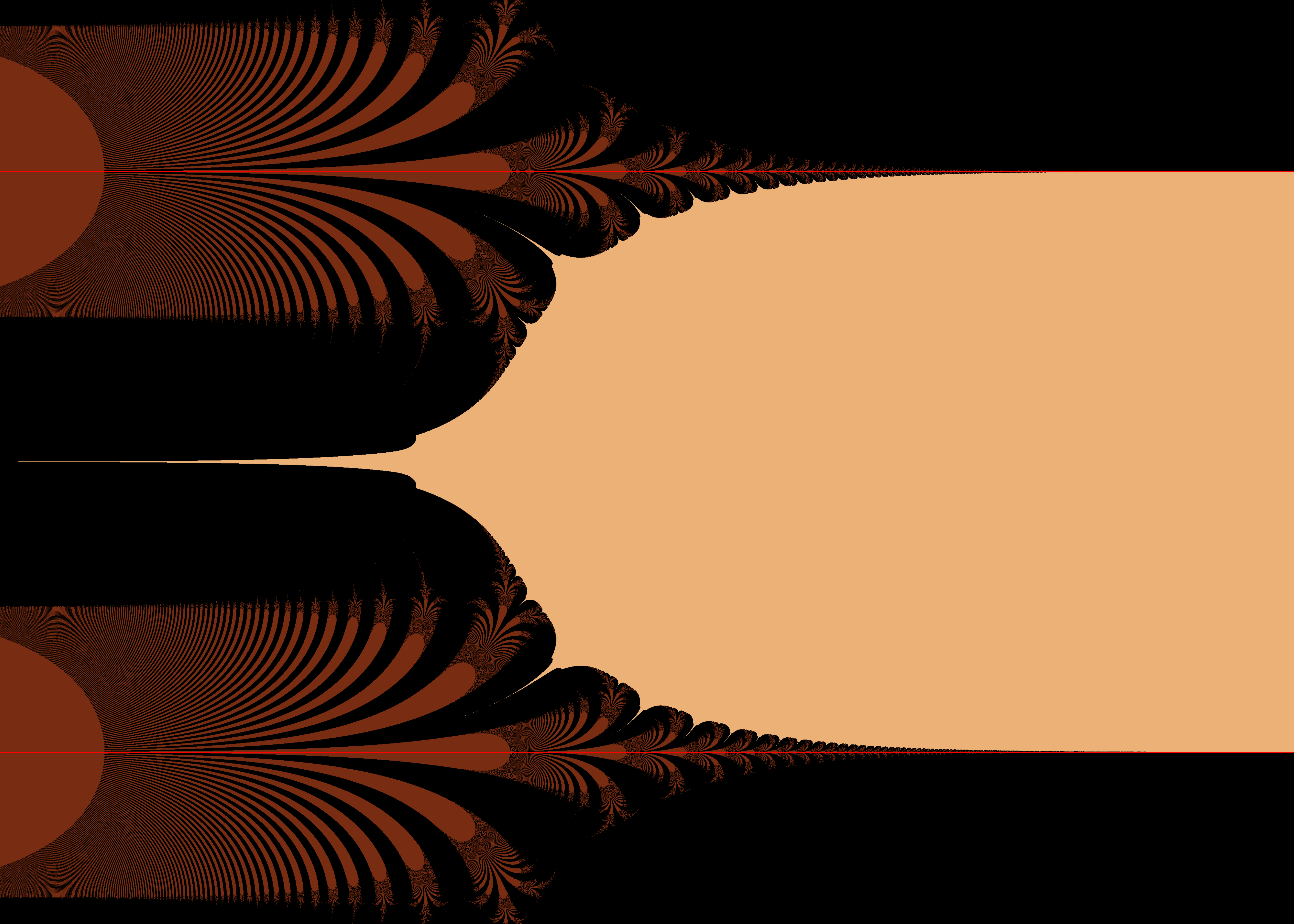}
	\setlength{\unitlength}{10cm}
	\put(-1.1, 0.57){\footnotesize $ \mathbb{R}+\pi i $}
	\put(-1.1, 0.11){\footnotesize $ \mathbb{R}-\pi i $}
	\put(-0.5, 0.35){\tiny $ \bullet$}
	\put(-0.4, 0.35){\tiny $ \times$}
	\put(-0.5, 0.33){\tiny $ 0$}
	\put(-0.4, 0.33){\tiny $ 1$}
	\caption{\footnotesize Dynamical plane for $ f(z)=z+e^{-z} $. In red, the Julia set of $ f $. In beige, the Baker domain contained in the strip $ \left\lbrace -\pi<\textrm{Im }z<\pi\right\rbrace  $. In black, the rest of the Fatou set of $ f $. The only critical point on the strip (0) is also marked, as well as the corresponding critical value (1).}
\end{figure}

 In \cite{BakerDom, FH06} general aspects of the dynamics of $f$ are discussed, including the existence of a doubly parabolic Baker domain  $U$ of degree 2, and giving a first topological description of its boundary.

In \cite{FagellaJove}, a more detailed description of the boundary is provided (both topologically and dynamically). We introduce here notation and some of the results in \cite{FagellaJove}.
\begin{itemize}
	\item Let $ \Sigma_2 $ be the space of codes of two symbols (0 and 1). We denote each sequence by $\underline{c}=\left\lbrace c_0c_1c_2...\right\rbrace $, $c_i\in \left\lbrace 0,1\right\rbrace $.
	\item Let  $ \underline{c} \in\Sigma_2$, and  let $ \underline{c}_n $ be the finite codes obtained by truncating $\underline{c}$ up to position $n$, i.e. $ \underline{c}_n =\left\lbrace c_0c_1...c_n\right\rbrace $. A {\em block} is a maximal set of consecutive 0's or 1's.
	Let us denote by $ B_n(\underline{c}) $ the number of blocks of $ \underline{c}_n $, and by $ L_n (\underline{c})$ the length of the last block of $  \underline{c}_n$.
	\item Let  $\Omega_0\coloneqq \partial U\cap \mathbb{H}_+$ and $\Omega_1\coloneqq \partial U\cap \mathbb{H}_-$, where $ \mathbb{H}_\pm $ stands for the upper and lower half-planes, respectively. Then, $ \partial U =\Omega_0\sqcup\Omega_1$. To each point $x\in\partial U$, we can give a code $ \underline{c}\in \Sigma_2 $ encoding its dynamics (i.e. conjugating the dynamics to the shift map in $ \Sigma_2 $).
	
	In \cite{FagellaJove}, it is proven that, given $ \underline{c}\in \Sigma_2 $, there exists a curve $\gamma_{\underline{c}}$ (hair) of escaping points (in fact, every escaping point with code $ \underline{c} $ is in $\gamma_{\underline{c}}$). For some $ \underline{c} $'s, it is shown that the hair lands at a finite endpoint (this set of codes have zero $\nu$-measure, to be defined below).
	\item We define a measure $\nu$ in the space $\Sigma_2$ as the push-forward of the measure $\omega_U$ on $\partial U$. Note that such a measure is finite, but it is not preserved by the map.
	On the other hand, the pushforward of the measure $\mu$ on $\partial U$ is infinite, and invariant under the shift map.
\end{itemize}  

We aim to give a more accurate description of the dynamics, using the results developed in the paper.

\begin{maintheorem}\label{thm-e}
	Let $U$ be the Baker domain of $f(z)=z+e^{-z}$. The following holds.
	\begin{enumerate}[label={\em (\alph*)}]
		\item\label{thme-a} For $\nu$-almost every code $\underline{c}\in\Sigma_2$, the hair $\gamma_{\underline{c}}$ lands at a finite endpoint.
		\item\label{thme-b}  If \[H_n\coloneqq \left\lbrace \underline{c}\in\Sigma_2\colon \textrm{ the first block of }\underline{c}\textrm{ has length }n  \right\rbrace, \] then $ \nu (H_n)\sim\frac1{\sqrt n} $, as $n\to\infty$.
		\item\label{thme-c} As $n\to\infty$,
		\[\nu\left(\left\lbrace  \underline{c}\in\Sigma_2 \colon \frac{B_n(\underline{c})}{\sqrt{n}}\leq \frac{2\sqrt2}\pi t \right\rbrace  \right) \to \frac2\pi \int_0^t e^{-\frac{y^2}\pi }dy, \hspace{0.5cm} t\geq 0,\]
		\[\nu\left(\left\lbrace \underline{c}\in\Sigma_2 \colon 1-\frac{L_n(\underline{c})}n \leq t \right\rbrace  \right) \to \frac2\pi \arcsin\sqrt t, \hspace{0.5cm} t\in [0,1].\]
	\end{enumerate}
\end{maintheorem}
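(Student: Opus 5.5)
The plan is to transfer everything to Boole's map $T(x)=x-\frac1x$, which is (up to conjugacy by a Möbius map sending the Denjoy--Wolff point to $\infty$) the radial extension of the inner function associated to $U$. I would first fix $\psi\colon\mathbb{H}\to U$ so that $h=\psi^{-1}\circ f\circ\psi$ has Boole's map as boundary extension. I would normalise so that $\psi^*((0,+\infty))\subset\Omega_0$ and $\psi^*((-\infty,0))\subset\Omega_1$; this uses that $\psi^*(0)=\infty$ and that the two half-lines correspond to the two halves $\mathbb{H}_\pm$ separated by the curves $\mathbb{R}\pm\pi i$.

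Under this normalisation, by Proposition \ref{prop-targets}, the symbol $c_k$ of $x\in Cl(\psi,\xi)$ equals $0$ if and only if $T^k(\xi)>0$. Hence $\nu$ is the push-forward of $\lambda$ (density $\sim 1/(1+x^2)$) under the coding of $T$ by the sign of the orbit. Moreover, $\mu$ (Lebesgue on $\mathbb{R}$) is $T$-invariant, and any probability $\nu'\ll\mu$ is admissible in Theorem \ref{thm-b-in}. Since $U$ has finite degree, its inner function is one component, so Theorems \ref{thm-a-in}, \ref{thm-b-in} and the AFN results of Lemma \ref{lem:innerAFN} all apply.

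For \ref{thme-b}, a code lies in $H_n$ exactly when the orbit of $\xi$ stays in one of the half-lines $(0,+\infty)$ or $(-\infty,0)$ for the first $n$ iterates and then switches. For Boole's map a point of $(0,+\infty)$ changes sign precisely when it lands in $(0,1)$, because $T((0,1))=(-\infty,0)$ and $T((1,\infty))=(0,\infty)$. So $H_n$ is described by the intervals $J_n^\pm$ and the points $p_n^\pm$ of Section \ref{subsection-dp-around-infty}, with $p_1^\pm=0$ and $p_2^\pm=\pm1$. This is exactly the set $F_n$ of Theorem \ref{thm-a-in}\ref{thma-inside-c}, taken for the infinite target whose complement is $[p_1^-,p_1^+]$ enlarged to $[-1,1]$. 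I would then repeat the computation in that proof, $\int_{\sqrt n}^{\infty}\frac{2\,dx}{1+x^2}\sim n^{-1/2}$, using $p_n^\pm\sim\pm\sqrt n$ and the comparability of $\lambda$ with $\frac{dx}{1+x^2}$, to get $\nu(H_n)\sim 1/\sqrt n$. The delicate point is matching the combinatorial definition of $H_n$ with the intervals $J_n^\pm$, i.e. with $F_n$ as computed in Theorem \ref{thm-a-in}\ref{thma-inside-c}, and I would check it carefully against the sign dynamics above.

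For \ref{thme-c}, let $D=(-1,1)$. By the same sign dynamics, a new block starts at time $k+1$ exactly when $T^k(\xi)\in D$. Hence $B_n(\underline c)=1+S_nD(\xi)+O(1)$, and $n-L_n(\underline c)=Z_nD(\xi)+O(1)$. Since $\mu(D)=2$, Theorem \ref{thm-b-in}\ref{thmb-a-in} gives $\nu(\{\frac{\pi}{\sqrt{2n}}S_nD\le 2t\})\to\frac2\pi\int_0^te^{-y^2/\pi}dy$, which is the first statement after rewriting $B_n/\sqrt n\le \frac{2\sqrt2}{\pi}t$. The bounded errors are harmless because both $\sqrt n$ and $n$ tend to infinity and the limit laws are continuous. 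Similarly, Theorem \ref{thm-b-in}\ref{thmb-c-in} applied to $Z_nD/n$ yields the arcsine law for $1-L_n/n$.

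For \ref{thme-a}, I would use Theorem \ref{thm-c}. For $\nu$-a.e. code the orbit visits the finite target $E=\psi^*([-1,1])$ at least $2\sqrt[3]{n}$ times before time $n$, by Theorem \ref{thm-a}\ref{thmA.a}. The end of the hair $\gamma_{\underline c}$, i.e. its accumulation set in $\mathbb{C}$, is contained in the set of boundary points with prefix $\underline c_n$. That set is the image, under the branch $F_n$ of $f^{-n}$ along the orbit, of a bounded piece of $\partial U$ near a point of $E$, and it can be covered by boundedly many discs $D(x,r_0)$. Theorem \ref{thm-c} then bounds its diameter by $K^{-\sqrt[3]{n}}\to0$, so the accumulation set is a single point and the hair lands. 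The main obstacle is this step: justifying the uniform covering of the relevant piece of a target by finitely many discs on which the inverse branches are defined, and localising the hair's end inside the nested cylinder sets. I would handle this with the expanding property (b) and Lemma \ref{lemma-baker-inverse-branch}.
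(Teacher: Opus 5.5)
Your reduction to Boole's map, the sign coding with $\psi^*([-1,1])$ as the changing zone, and your treatment of (c) via Theorem~\ref{thm-b-in} with $D=(-1,1)$, $\mu(D)=2$ all follow the paper, and (c) is fine. The genuine problem is in (b). Your identification of $H_n$ with $J_n^+\cup J_n^-$ (where $p_1^\pm=0$, $p_2^\pm=\pm1$) is correct. But the computation you import, $\int_{\sqrt n}^{\infty}\frac{2\,dx}{1+x^2}\sim n^{-1/2}$, is the harmonic measure of $[p_n^+,+\infty)$. That set consists of the codes whose first block has length \emph{at least} $n$, not of the single interval $J_n^+$. For $J_n^\pm$ itself, $p_n^\pm\asymp\pm\sqrt n$ and $|J_n^\pm|\asymp n^{-1/2}$ give $\nu(H_n)\asymp |J_n^\pm|/(1+(p_n^\pm)^2)\asymp n^{-3/2}$. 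In fact no argument can yield $\nu(H_n)\sim n^{-1/2}$: the sets $H_n$ are pairwise disjoint and $\nu$ is finite, so $\sum_n\nu(H_n)<\infty$, whereas $\sum_n n^{-1/2}=\infty$. The paper's proof contains the same slip: in the proof of Theorem~\ref{thm-a-in}(c), the set of points of $F$ leaving $F$ exactly at time $n$ is written as $(-\infty,p_n^-]\cup[p_n^+,+\infty)$. So the careful check against the sign dynamics that you planned would actually show that (b) is false as stated. What the computation gives is $\nu\bigl(\bigcup_{m\ge n}H_m\bigr)\sim n^{-1/2}$ and $\nu(H_n)\sim n^{-3/2}$. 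The rate $n^{-1/2}$ for $H_n$ itself holds only for the push-forward of the infinite invariant measure $\mu$, i.e.\ the Lebesgue length of $J_n^\pm$.

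Part (a) also has a gap, at exactly the step you flagged. The set of boundary points with prefix $\underline c_n$ is unbounded, since it contains the escaping tails of all hairs with that prefix. So it is not the image under $F_n$ of a bounded piece of $\partial U$, and it cannot be covered by boundedly many discs $D(x,r_0)$. To apply Theorem~\ref{thm-c} you would need a bounded set containing the end of $\gamma_{\sigma^n\underline c}$ ($\sigma$ the shift), with control uniform in $n$, and your plan does not supply one. The paper sidesteps this using the construction of hairs in \cite[Thm.~4.3]{FagellaJove}. There $\gamma_{\underline c}$ is a concatenation of pieces $\gamma_n$ with $\mathrm{diam}\,\gamma_n\le\mathrm{diam}\,F_{c_1}\circ\dots\circ F_{c_n}(D)$ for one fixed bounded set $D$. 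The contraction estimate then gives $\mathrm{diam}\,\gamma_n\lesssim K^{-\sqrt[3]{n}}$. It is the summability of $K^{-\sqrt[3]{n}}$, not a single-scale bound on one cylinder, that forces $\mathrm{diam}\bigcup_{m\ge n}\gamma_m\to0$ and hence landing. Your use of Theorem~\ref{thm-a}(a) to count forward visits to the finite target is the right input for these pull-backs along forward orbits.
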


Note that the first result is purely topological, but proved using techniques coming from ergodic theory. Items (b) and (c) allow us to control the lengths of the blocks (from different points of view); and hence the time spent in the lower and the upper half-planes. This kind of arguments are used recurrently in \cite{FagellaJove}, and possibly this improved results could be used to improve some of their results.
Note that item (c) is telling us that the number of different blocks in $\underline{c}$ up to position $n$ is essentially  comparable to $\frac1{\sqrt n}$;  while the length of the last block up to position $n$ is essentially  comparable to $n$.

\subsection*{The associated inner function and Boole's map} One of the aspects that makes this particular Baker domain to be especially interesting from a measure-theoretical point of view is that the boundary map $ f|_{\partial U} $ is measure-theoretically conjugate to {\em Boole's map}
\[T\colon \mathbb{R}\to \mathbb{R} \hspace{1cm}T(x)=x-\frac1x,\] which is the paradigmatic example of a transformation which preserves an infinite measure (that appeared previously in Section \ref{sect-dpinnerfunctions}). By means of the concept of target developped in the paper we can even extend this conjugacy to describe the boundary dynamics topologically.

In fact, the existence of such conjugacy holds for any doubly parabolic Baker domain of degree 2.
We prove it next.
\begin{lemma}
	Let $ U $ be a Baker domain of degree 2. Then, there exist a Riemann map $ \psi\colon\mathbb{H}\to U$ such that the associated inner function $ h=\psi^{-1}\circ f\circ \psi $ is $h(z)= z-\frac1z$.
\end{lemma}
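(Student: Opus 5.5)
The plan is to normalise the associated inner function step by step. First I pin down its general form in the upper half-plane, then I use the doubly parabolic condition to remove the free parameters, and finally I absorb the two parameters that remain into the Riemann map by a real affine change of coordinates.

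First, I fix any Riemann map $\psi_0\colon\mathbb{H}\to U$ chosen so that the Denjoy--Wolff point of $h_0=\psi_0^{-1}\circ f\circ\psi_0$ is $\infty$. This is possible by composing a Riemann map $\mathbb{D}\to U$ with a Möbius map $M\colon\mathbb{D}\to\mathbb{H}$ sending the Denjoy--Wolff point to $\infty$, as in Section \ref{sect-dpinnerfunctions}. Since $U$ has degree $2$, the associated inner function is a Blaschke product of degree $2$. Hence $h_0$ is a rational map of degree $2$ which preserves $\mathbb{H}$ and $\widehat{\mathbb{R}}$ and fixes $\infty$. Because $U$ is doubly parabolic, $h_0$ is a doubly parabolic inner function in the sense of Definition \ref{def:dblpar}.

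Second, I determine the form of $h_0$. In the Nevanlinna-type representation $h_0(z)=\alpha z+\beta+\int_{\mathbb{R}}\frac{1+zw}{w-z}\,d\rho(w)$, finite degree forces $\rho$ to be a finite sum of point masses. As in the proof of Lemma \ref{lemma-finiteBP}, degree $2$ (counting preimages, with one preimage of $\infty$ at $\infty$ itself) leaves exactly one finite pole. This gives
\[h_0(z)=\alpha z+\beta'-\frac{b}{z-a},\qquad \alpha>0,\ b>0,\ a,\beta'\in\mathbb{R}.\]
This matches Boole's map $z-1/z$, which is degree $2$ with a single finite pole. Under the pole-counting convention of Lemma \ref{lemma-finiteBP}, this is the case $n=1$.

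Third, I use the doubly parabolic hypothesis at $\infty$. Expanding gives $h_0(z)=\alpha z+\beta'-b/z-ab/z^2+\dots$. A parabolic fixed point at $\infty$ forces multiplier $1$, so $\alpha=1$. Two petals, that is, the normal form $z\mapsto z-c/z+\dots$ from Section \ref{subsection-dp-around-infty} with no translation term, force $\beta'=0$. Equivalently, the conjugated map $w\mapsto -1/h_0(-1/w)$ must be $w+cw^3+\dots$ with no $w^2$ term, and that $w^2$ coefficient is exactly $\beta'$. This is the step I expect to need the most care, since the translation term must be matched correctly with the absence of the quadratic term. With it, $h_0(z)=z-\frac{b}{z-a}$.

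Finally, I conjugate by the real affine map $A(z)=(z-a)/\sqrt b$. This map has positive leading coefficient, so it is an automorphism of $\mathbb{H}$ fixing $\infty$. Writing $z=\sqrt b\,w+a$, we get $h_0(z)-a=\sqrt b\,w-\frac{b}{\sqrt b\,w}=\sqrt b\,(w-1/w)$, and therefore $A\circ h_0\circ A^{-1}(w)=w-1/w$. Setting $\psi\coloneqq\psi_0\circ A^{-1}$, this is a Riemann map $\mathbb{H}\to U$, and
\[\psi^{-1}\circ f\circ\psi=A\circ h_0\circ A^{-1}=z-\tfrac1z,\]
as required.
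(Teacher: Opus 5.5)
Your proof is correct and takes essentially the same route as the paper: the doubly parabolic normal form at $\infty$ together with degree $2$ reduces $h$ to $z-\frac{b}{z-a}$, and a real affine automorphism of $\mathbb{H}$ then normalises it to Boole's map. Like the paper, you use the doubly parabolic hypothesis, which the statement omits but the surrounding text intends. Your version is more careful than the paper's, which asserts $h(z)=z-\frac{c}{z}$ directly. The paper leaves implicit the translation that moves the pole to $0$ and the check that the vanishing of the $w^2$ coefficient is exactly $\beta'=0$.
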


\begin{proof}
	As in the proof of Theorem \ref{thm-a}, the map near infinity is of the form \[h(z)=z-\frac{c}{z}+\dots, \hspace{0.5cm} c>0.\] However, since the map is of degree two, we have $ h(z)=z-\frac{c}{z} $. By a conformal change of variables (i.e. by precomposing the Riemann map by an automorphism of $\mathbb{H}$), we can take $ h(z)=z-\frac{1}{z} $, as desired.
\end{proof}

If we restrict ourselves to the Baker domain of $f(z)=z+e^{-z}$, there is a priviledged Riemann map to work with, the one for which the associated inner function is $ g(z)=\frac{3z^2+1}{3+z^2} $ (compare with \cite{BakerDom, FagellaJove}). We relate this Riemann map with the conformal map $ \psi $ giving the boundary conjugacy with Boole's map.
\begin{lemma}
	Let $ U $ be the Baker domain of $ f(z)=z+e^{-z} $. $ \psi\colon\mathbb{H}\to U$ such that the associated inner function $ h=\psi^{-1}\circ f\circ \psi $ is $h(z)= z-\frac1z$. Let $ \varphi\colon\mathbb{D}\to U $ be the Riemann map so that the inner function $ g\colon\mathbb{D}\to \mathbb{D} $ associated with $ f|_U $ by $ \varphi $ is \[g(z)=\frac{3z^2+1}{3+z^2}.\] Then, $ \psi =\varphi\circ M$, where $ M(z)=i\frac{1+z}{1-z} $.
\end{lemma}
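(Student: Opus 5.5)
The plan is to check directly that the Möbius change of coordinates $M(z)=i\frac{1+z}{1-z}$ conjugates $g$ to Boole's map, and then to use a rigidity argument: any two conformal maps $\mathbb{H}\to U$ that conjugate $f|_U$ to $z\mapsto z-\frac1z$ must coincide. Note that $M$ sends $\mathbb{D}$ onto $\mathbb{H}$, with $M(1)=\infty$, where $1$ is the Denjoy--Wolff point of $g$. The identity $\psi=\varphi\circ M$ therefore has to be read with $M$ used as the coordinate identification between $\mathbb{D}$ and $\mathbb{H}$, i.e.\ $\psi=\varphi\circ M^{-1}$ as maps $\mathbb{H}\to U$. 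I will prove it in that form.

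\emph{Step 1 (explicit conjugacy).} Compute $M\circ g\circ M^{-1}$. From $g(z)=\frac{3z^2+1}{3+z^2}$ one gets
\[1+g(z)=\frac{4(1+z^2)}{3+z^2},\qquad 1-g(z)=\frac{2(1-z^2)}{3+z^2},\]
and hence
\[M(g(z))=2i\,\frac{1+z^2}{1-z^2}.\]
Now substitute $z=M^{-1}(w)=\frac{w-i}{w+i}$. This gives
\[1+z^2=\frac{2(w^2-1)}{(w+i)^2},\qquad 1-z^2=\frac{4iw}{(w+i)^2},\]
so that
\[M\circ g\circ M^{-1}(w)=\frac{w^2-1}{w}=w-\frac1w.\]
Consequently $\widetilde\psi\coloneqq\varphi\circ M^{-1}\colon\mathbb{H}\to U$ is a conformal map with $\widetilde\psi^{-1}\circ f\circ\widetilde\psi=h$, where $h(z)=z-\frac1z$. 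No rescaling is needed; this is the computation I would double-check first.

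\emph{Step 2 (rigidity).} Let $\psi$ be the map in the statement, so that $\psi^{-1}\circ f\circ\psi=h$ as well. Then $A\coloneqq\psi^{-1}\circ\widetilde\psi$ is a conformal automorphism of $\mathbb{H}$ with $A\circ h=h\circ A$. Interior orbits of $h$ converge to its Denjoy--Wolff point $\infty$, and $A$ maps $h$-orbits to $h$-orbits, so by uniqueness of the Denjoy--Wolff point $A$ fixes $\infty$. Hence $A(z)=az+b$ with $a>0$ and $b\in\mathbb{R}$. Expanding the commutation relation gives
\[az+b-\frac{1}{az+b}=az+b-\frac{a}{z},\]
which forces $az+b=\frac{z}{a}$ for all $z$, i.e.\ $b=0$ and $a^2=1$. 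Thus $a=1$, $A=\mathrm{id}$, and $\psi=\varphi\circ M^{-1}$, which is the claimed identity under the identification above.

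\emph{Main obstacle.} The only delicate point is the bookkeeping of the direction of $M$ and the normalization: the lemma is true with no extra dilation precisely because the identity in Step 1 holds on the nose. Once that algebra is confirmed, Step 2 is routine. Should the statement intend $\psi$ to be only some map with this property, Step 2 shows that it is unique, so the identification with $\varphi\circ M^{-1}$ is forced.
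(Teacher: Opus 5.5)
Your proof is correct and is essentially the paper's argument: the paper says only ``the proof is a computation'', and your Step 1 is that computation; $M\circ g\circ M^{-1}(w)=w-\frac1w$ checks out. You are also right that the identity has to be read as $\psi=\varphi\circ M^{-1}$ (equivalently $\varphi=\psi\circ M$), since $\varphi\circ M$ is not defined on $\mathbb{H}$; your Step 2 shows that the only automorphism of $\mathbb{H}$ commuting with $z\mapsto z-\frac1z$ is the identity, which supplies the uniqueness of $\psi$ that the paper leaves implicit.
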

\begin{proof}
The proof is a computation.
\end{proof}

\subsection*{Proof of Theorem \ref{thm-e}}
For the proof of {\em \ref{thme-a}}, recall that the hair $ \gamma_{\underline c} $ is constructed by the concatenation of infinitely many pieces $ \gamma_n $, and \[\textrm{diam} \gamma_n\leq \textrm{diam}  F_{c_1}\circ\dots\circ F_{c_n}(D),\] where $D$ is a fixed  bounded set and $F_0$ and $F_1$ are the two branches of $f^{-1}$ fixing $U$ \cite[Thm. 4.3]{FagellaJove}. Therefore, if \[\sum_n \textrm{diam}  F_{c_1}\circ\dots\circ F_{c_n}(D)<\infty,\] the hair $\gamma_n$ lands at a finite endpoint. Note that in any finite target $f$ is strictly expanding \cite[Sect. 3]{FagellaJove}; fix such target and let $K$ be the constant of expansion. Then, invoking Theorem \ref{thm-c}, we see that for $ \nu $-almost every $ \underline{c} $, \[\textrm{diam}  F_{c_1}\circ\dots\circ F_{c_n}(D)\leq K^{-\sqrt[3]{n}} \textrm{diam}D,\] which is summable, as desired.

Items  {\em \ref{thme-b}} and  {\em \ref{thme-c}} are a reformulation of Theorems \ref{thm-a} and   \ref{thm-b} in the language of codes. Indeed, let us note that hairs are precisely cluster sets \cite[Sect. 6]{FagellaJove}, and its dynamics is governed by the inner function. Moreover, there is a distinguished target $E=\varphi^*(\left[ i, -i\right] )=\psi^*(\left[ -1, 1\right] )$, characterized by the following property. When a hair hits this target (note that all the hair hit the target at the same time, because points in the hair have all the same symbolic dynamics), it changes from being in  $\Omega_0\coloneqq \partial U\cap \mathbb{H}_+$ to  $\Omega_1\coloneqq \partial U\cap \mathbb{H}_-$ (or viceversa) in the next iteration. Hairs which are not in the target remain in $\Omega_i$. Thus, the target is the `changing zone'. In terms of codes. Thus, understanding probabilistically when hairs hit $E$ can be translated in understanding the asymptotic distribution of $H_n$, $B_n$ and $L_n$, as we show next.

By definition, $H_n$ is the set of codes whose first block is of length $n$. This is equivalent to ask for the first entrance to the set $E$ for hairs which are outside $E$ (or, equivalently, how much time remain outside $E$), and this is described by Theorem \ref{thm-a}{\em\ref{thmA.c}}.

Moreover, $B_n(\underline{c}_n)$ is the number of blocks of $\underline{c}_n$. This is precisely the number of times the hair of code $\underline{c}$ visits $E$ up to time $n$, hence its distribution is given by Theorem \ref{thm-b}{\em \ref{thmb-a}}. Finally  $L_n(\underline{c}_n)$ stands for  the length of the last  block of $\underline{c}_n$. The last block is determined by the last time we visited $E$ ($Z_nE$ in the notation of Theorem \ref{thm-b}), and 
\[L_n(\underline{c}_n)= 1-\frac{Z_nE}{n}\]and the distribution is given by Theorem \ref{thm-b}{\em \ref{thmb-c}}.

\appendix

\section{Centered Inner Functions}\label{appendix}

An inner function satisfying that $ g(0)=0 $ is called a {\em centered inner function}. For centered inner functions, the Lebesgue measure $ \lambda $ is invariant and ergodic under $ g^* $. As a consequence, the ergodic properties of $ g^* $
can be studied very easily (at least compared with doubly parabolic ones). Establishing these ergodic properties is the goal of the present section. 

Note that, by conjugating $ g $ by an appropriate automorphism of $ \DD $, we are actually considering all inner functions having a fixed point in $ \DD $.

Moreover, it is well-known that a centered inner function is uniformly
expanding on the unit circle:
\[	\inf_{\xi \in \partial\mathbb{D}} |g'(\xi)| > 1,\]
where we use the convention that $|g'(\xi)| = \infty$ if $g$ does not have
an angular derivative at $\xi$ (see \cite[Thm. 4.15]{Mashreghi}).
\subsection{Background on Ergodic Theory}\label{appendix-erg-th}
Let $ (X, \mathcal{A},\mu) $ be a measure space, and $T\colon X\to X$ be a measure-preserving transformation.  Assume $ \mu $ is a probability measure.

\begin{thm}{\bf (Birkhoff Ergodic Theorem, {\normalfont \cite[Corol. 8.2.14]{URM1}})}\label{thm-birkhoff}
	Let $T\colon X\to X$ be a measure-preserving transformation on a probability space $(X, \mathcal{A}, \mu)$. Assume $T$ is ergodic with respect to $\mu$, and $h\in L^1(\mu) $. Then, for $\mu$-almost every $x\in X$,
	\[\lim\limits_{n\to +\infty} \frac1n \sum_{k=0}^{n-1} h(T^k(x))=\int_X hd\mu.\] 
\end{thm}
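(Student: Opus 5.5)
This is Birkhoff's ergodic theorem for a probability space, a classical result cited from \cite{URM1}. I would give the standard proof in three steps: the maximal ergodic theorem, pointwise convergence of the averages, and identification of the limit using ergodicity.

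\emph{Step 1 (maximal ergodic inequality).} Fix $h\in L^1(\mu)$ and set $S_n h=\sum_{k=0}^{n-1}h\circ T^k$ and $M_N=\max_{0\le n\le N} S_nh$, with $S_0h=0$. On the set $\{M_N>0\}$ one has $M_N\le h+M_N\circ T$. Indeed, $S_{n+1}h=h+S_nh\circ T\le h+M_N\circ T$ for every $n\le N-1$, and taking the maximum over $1\le n\le N$ gives the bound. Integrate this over $\{M_N>0\}$ and use that $T$ preserves $\mu$ and $M_N\circ T\ge 0$. This yields $\int_{\{M_N>0\}}h\,d\mu\ge0$. Letting $N\to\infty$ gives $\int_{\{\sup_n S_nh>0\}}h\,d\mu\ge0$.

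\emph{Step 2 (a.e.\ convergence).} Put $\overline h=\limsup_n \frac1n S_nh$ and $\underline h=\liminf_n\frac1n S_nh$. Both are $T$-invariant, since $\frac1nS_nh\circ T-\frac1nS_nh=\frac{h\circ T^n-h}{n}\to0$ a.e. This uses $\sum_n\mu(|h\circ T^n|>\varepsilon n)=\sum_n\mu(|h|>\varepsilon n)<\infty$ together with Borel--Cantelli. For rationals $a<b$, the set $E_{a,b}=\{\underline h<a<b<\overline h\}$ is therefore invariant. Apply Step 1 on $E_{a,b}$ to $h-b$ and to $a-h$. On $E_{a,b}$ we have $\sup_n S_n(h-b)>0$ everywhere, so Step 1 gives $\int_{E_{a,b}}(h-b)\,d\mu\ge0$, and similarly $\int_{E_{a,b}}(a-h)\,d\mu\ge0$. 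Adding the two inequalities gives $(a-b)\mu(E_{a,b})\ge0$, hence $\mu(E_{a,b})=0$. Taking the union over countably many pairs $(a,b)$ shows that $\frac1nS_nh$ converges a.e.\ to a limit $h^*$ with values in $[-\infty,\infty]$. Fatou's lemma gives $\int|h^*|\,d\mu\le\int|h|\,d\mu$, so $h^*$ is finite a.e.

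\emph{Step 3 (identification of the limit).} The function $h^*$ is $T$-invariant, so by ergodicity it equals a constant $c$ a.e. To see that $c=\int h\,d\mu$, first take $h$ bounded. Then dominated convergence gives $\int h^*\,d\mu=\lim_n\frac1n\int S_nh\,d\mu=\int h\,d\mu$. For general $h$, approximate by bounded $h_j$ with $\|h-h_j\|_1\to0$. The difference of the limits is controlled by $\int|(h-h_j)^*|\,d\mu\le\|h-h_j\|_1$. Here finiteness of $\mu$ is essential, both so that constants are integrable and so that $\int h^*\,d\mu$ is a constant times $\mu(X)=1$.

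The main obstacle is the maximal inequality in Step 1, specifically its restriction to the invariant set $E_{a,b}$ in Step 2. To handle this cleanly I would apply Step 1 to the measure-preserving system $(E_{a,b},\mu|_{E_{a,b}},T)$, which is legitimate because $E_{a,b}$ is invariant.
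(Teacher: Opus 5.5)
The paper does not prove this statement. It quotes it from \cite[Corol.~8.2.14]{URM1} and uses it only in the appendix on centered inner functions. Your argument is the standard proof: a maximal inequality, a.e.\ convergence via the sets $E_{a,b}$, and identification of the constant by ergodicity plus approximation by bounded functions. It is correct.

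There is one technical point, and it sits exactly at the step you flag as the main obstacle. Your Borel--Cantelli argument only gives $\overline h\circ T=\overline h$ and $\underline h\circ T=\underline h$ \emph{almost everywhere}. So $E_{a,b}$ is invariant only modulo null sets. Restricting $T$ to $(E_{a,b},\mu|_{E_{a,b}})$, however, needs $T^{-1}E_{a,b}=E_{a,b}$ exactly.

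Either of the following repairs it:
\begin{enumerate}
\item Use the exact identity
\[ \tfrac1n S_nh(Tx)=\tfrac{n+1}{n}\cdot\tfrac{1}{n+1}S_{n+1}h(x)-\tfrac{h(x)}{n}. \]
For a finite-valued representative of $h$, this gives $\overline h\circ T=\overline h$ and $\underline h\circ T=\underline h$ at every point. This also makes Borel--Cantelli unnecessary.
\item Apply Step~1 on the whole space to $(h-b)\mathbbm{1}_{E_{a,b}}$ and $(a-h)\mathbbm{1}_{E_{a,b}}$. Since $\mu(T^{-k}E_{a,b}\triangle E_{a,b})=0$ for all $k$, you have $S_n\bigl((h-b)\mathbbm{1}_{E_{a,b}}\bigr)=\mathbbm{1}_{E_{a,b}}S_n(h-b)$ a.e. Hence $\{\sup_n S_n((h-b)\mathbbm{1}_{E_{a,b}})>0\}=E_{a,b}$ up to a null set.
\end{enumerate}

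Finally, finiteness of $\mu$ is already used in Step~2, to make $h-b$ and $a-h$ integrable. It is not needed only in Step~3.
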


\begin{thm}{\bf (Kac's Lemma, {\normalfont \cite[Lemma 10.2.6]{URM1}})}\label{theorem-kac'slemma}
	Let $T\colon X\to X$ be a measure-preserving transformation on a probability space $(X, \mathcal{A}, \mu)$. Assume $T$ is ergodic with respect to $\mu$. Let  $ E\subset X $ be a set of positive measure, and let \[E_n\coloneqq \left\lbrace x\in E\colon \tau_E(x)=n\right\rbrace, \] where $ \tau_E(x) \in \mathbb{N}$ denotes the first return time of $ x\in E $ to $ E $. Then, $$ \sum_n n\cdot \mu(E_n)= \mu(E) .$$
\end{thm}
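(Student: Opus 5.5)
The plan is a tower (Kac–Rokhlin) decomposition of the part of $X$ swept out by $E$. Write $\widehat E\coloneqq\bigcup_{k\geq 0}T^{-k}(E)$ and first establish
\[
\sum_n n\,\mu(E_n)=\int_E\tau_E\,d\mu=\mu(\widehat E).
\]
Then I would use ergodicity to identify $\mu(\widehat E)$ with the measure appearing on the right-hand side of the statement.

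\emph{Step 1 (a.e.\ finiteness of $\tau_E$).} Since $\mu$ is a finite invariant measure, Poincaré recurrence gives $\tau_E<\infty$ for $\mu$-a.e.\ $x\in E$. Hence $E=\bigsqcup_n E_n$ up to a null set.

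\emph{Step 2 (columns).} For $n\geq1$ and $0\leq j\leq n-1$ put $C_{n,j}\coloneqq T^{j}(E_n)$, understood as the set of points $y$ with $T^{-j}$-preimage in $E_n$. It is cleaner to work with preimages. Let $A_j\coloneqq\{y\notin E : T(y),\dots,T^{j-1}(y)\notin E,\ T^j(y)\in E\}$, and let $A_j^{(n)}\subset A_j$ be the subset whose first entrance point lies in $E_{n}$ for the suitable $n>j$. The sets $E$ and $A_j$, $j\geq1$, are pairwise disjoint and their union is $\widehat E$.

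\emph{Step 3 (Lemma \ref{lemma-zweimuller}-type identity).} Invariance of $\mu$ gives $\mu(T^{-1}B)=\mu(B)$. Decomposing $T^{-1}(E\cap\{\tau_E>j\})$, respectively $T^{-1}(A_j)$, according to whether the preimage lies in $E$ or not yields
\[
\mu(A_j)=\mu(E\cap\{\tau_E>j\}),
\]
exactly as in Lemma \ref{lemma-zweimuller}. Summing over $j\geq0$ (with $A_0\coloneqq E$) gives
\[
\mu(\widehat E)=\sum_{j\geq0}\mu(E\cap\{\tau_E>j\})=\sum_{n}n\,\mu(E_n),
\]
where the last equality is the layer-cake rearrangement $\sum_{j<n}1=n$, justified by Tonelli since all terms are nonnegative.

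\emph{Step 4 (matching the right-hand side).} Here the statement's normalisation has to enter. The set $\widehat E$ satisfies $T^{-1}\widehat E\subset\widehat E$ with $\mu(\widehat E\setminus T^{-1}\widehat E)=0$, so it is invariant mod $\mu$. Ergodicity then forces $\mu(\widehat E)\in\{0,\mu(X)\}$, and $\mu(\widehat E)\geq\mu(E)>0$ excludes the value $0$. The remaining task is to match $\mu(X)$ with the quantity written as $\mu(E)$ in the statement, and I expect this step to be the main obstacle.

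The tower identity $\sum_n n\,\mu(E_n)=\mu(\widehat E)$ is the substantive content, and it delivers $\mu(E)$ literally exactly when $E$ is itself invariant mod $\mu$, i.e.\ $\widehat E=E$. What I would try first is to check whether the surrounding text of the appendix intends $\mu(E)$ to denote the measure of the saturation $\widehat E$, or intends $\mu$ normalised so that $\mu(X)=1$. I cannot reconcile the literal right-hand side $\mu(E)$ with the tower identity otherwise. The tower computation shows $\sum_n n\,\mu(E_n)\geq\mu(E)$ in general, with equality precisely in that invariant case. Under ergodicity this is the trivial case $\mu(E)=\mu(X)$.
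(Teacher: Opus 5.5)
Your computation is correct, and the obstacle you hit in Step 4 is not a gap in your argument. It is a misprint in the statement: as written, $\sum_n n\,\mu(E_n)=\mu(E)$ is false whenever $0<\mu(E)<1$. Kac's Lemma says $\sum_n n\,\mu(E_n)=\int_E\tau_E\,d\mu=\mu(X)=1$. That is exactly what your tower identity $\sum_n n\,\mu(E_n)=\mu(\widehat E)$ gives once combined with ergodicity.

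A minimal counterexample to the literal statement: take $X=\{a,b\}$, $T$ the swap, $\mu$ uniform (so $T$ is ergodic), and $E=\{a\}$. Then $E_2=E$ and $\sum_n n\,\mu(E_n)=2\cdot\frac12=1\neq\mu(E)$. Similarly, for the doubling map and $E=[0,\frac12)$ one finds $\mu(E_n)=2^{-(n+1)}$, which again sums to $1$. So you should simply conclude with $\sum_n n\,\mu(E_n)=1$; there is nothing further to reconcile.

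The paper gives no independent proof. Its remark deduces the statement from \cite[Lemma 10.2.6]{URM1}, which is Kac's formula for absorbing (sweep-out) sets, together with the fact that for an ergodic conservative map every set of positive measure is absorbing. That cited formula has $\mu(X)$ on the right-hand side, so the paper's route also produces $1$, which confirms that $\mu(E)$ is an error.

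Your proof is the same argument made self-contained:
\begin{itemize}
\item Your identity $\mu(A_j)=\mu(E\cap\{\tau_E>j\})$ is Lemma \ref{lemma-zweimuller}, since your $A_j$ is $E^C\cap\{\tau_E=j\}$. It follows from $T^{-1}A_j=A_{j+1}\sqcup(E\cap\{\tau_E=j+1\})$ and the finiteness of $\mu$.
\item The disjoint decomposition $\widehat E=E\sqcup\bigsqcup_{j\ge1}A_j$ is the standard proof of the cited lemma.
\item Your use of ergodicity to get $\mu(\widehat E)=\mu(X)$ is precisely the sweep-out property the paper invokes.
\end{itemize}
The sets $C_{n,j}$ and $A_j^{(n)}$ in Step 2 play no role and can be dropped. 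The same correction carries over to Theorem \ref{thm-centered-inner}(b), whose right-hand side should be $1$ rather than $\lambda(E)$.
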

\begin{obs}
	We notice that the statement of Theorem \ref{theorem-kac'slemma} is not exactly the one in \cite[Lemma 10.2.6]{URM1}, but it can be deduced straighforward from it. Indeed, $T$ is ergodic and invariant (and hence, conservative) with respect to $\mu$, every set of positive measure is absorbing \cite[Corol. 10.1.14]{URM1}. Since \cite[Lemma 10.2.6]{URM1} is stated in terms of absorbing sets for measure-preserving transformations (not assumed to be ergodic), we see that it holds for any set of positive measure, if we assume that the measure is finite and ergodic.
\end{obs}

\subsection{Ergodic properties of  centered inner functions}
The following theorem is analogous to Theorem \ref{thm-a}, but for centered inner functions. Notice the simplicity of its proof compared with the one of Theorem \ref{thm-a}.

\begin{thm}{\bf (Ergodic properties of centered inner functions)}\label{thm-centered-inner} Let $g\colon\mathbb{D}\to\mathbb{D}$ be a centered inner function.
	\begin{enumerate}[label={\em (\alph*)}]
		\item {\em (Occupation times)}\label{thm1.1a} For every arc $ E\subset\partial \mathbb{D} $, let \[ S_nE(x)= \# \left\lbrace 0\leq k\leq n-1: T^k(x)\in E \right\rbrace =\sum_{k=0}^{n} \mathbbm{ 1 }_E\circ T^k(x).\]  Then, \[\lim\limits_n \frac{S_nE(x)}{n}= \lambda(E) \] for $ \lambda $-almost every $ x\in \partial U $. 
		\item {\em (First return times)}\label{thm1.1b} Let $ E\subset\partial \DD $ be an arc, and let \[E_n\coloneqq \left\lbrace x\in E\colon \tau_E(x)=n\right\rbrace, \] where $ \tau_E(x) \in \mathbb{N}$ denotes the first return time of $ x\in E $ to $ E $. Then, $$ \sum_n n\cdot \lambda(E_n)= \lambda(E) .$$
		\item {\em (Waiting times)}\label{thm1.1c} Let $p\in \partial \DD$ be a regular fixed point (i.e. $ g $ is holomorphic on a neighbourhood of $ p $, and $ g(p)=p $). 	Let $ G\subset \partial\mathbb{D} $ be an open arc, $ p\in G $, and let \[G_n\coloneqq \left\lbrace \xi\in G\colon g^*(\xi), \dots, (g^*)^{n-1}(\xi)\in G\right\rbrace. \] Then, $ \lambda(G_n)\sim C^{n} $, for some constant $C$, $0<C<1$. 
	\end{enumerate}
\end{thm}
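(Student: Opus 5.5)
Items (a) and (b) are direct applications of the classical theory recalled in Section~\ref{appendix-erg-th}, so the real work is item (c). For a centered inner function $g$ I will use the well-known facts that $\lambda$ is $g^*$-invariant (Löwner's lemma) and that $g^*$ is ergodic, and even exact, with respect to $\lambda$.

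For (a), apply the Birkhoff Ergodic Theorem~\ref{thm-birkhoff} to $h=\mathbbm{1}_E\in L^1(\lambda)$. This gives $S_nE(x)/n\to\int\mathbbm{1}_E\,d\lambda=\lambda(E)$ for $\lambda$-almost every $x$. (The sum in the statement runs to $n$ rather than $n-1$, but one extra term is negligible after dividing by $n$.)

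For (b), apply Kac's Lemma~\ref{theorem-kac'slemma} to the arc $E$, which has positive measure. By the subsequent remark, ergodicity together with invariance of the probability measure $\lambda$ implies that $E$ is absorbing, so $\sum_n n\,\lambda(E_n)=\lambda(E)$.

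For (c), I first dispose of the trivial case: if $G=\partial\DD\smallsetminus\{pt\}$ then $\lambda(G_n)=1$. So assume $\overline{G}\neq\partial\DD$. The sets $G_n$ are decreasing, and each $G_n$ is a union of arcs whose images under $(g^*)^n$ land in $G$. The plan has four steps.

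\emph{Step 1: bounded distortion.} I will show that there is $M\geq 1$ such that for every $n$, every component $I$ of $G_n$ (a domain on which $(g^*)^n$ is a univalent branch into $G$), and every measurable $A\subset G$,
\[\frac1M\leq\frac{\lambda\bigl(I\cap (g^*)^{-n}(A)\bigr)}{\lambda(I)\,\lambda(A)/\lambda\bigl((g^*)^n(I)\bigr)}\leq M .\]
To get this I would argue that inverse branches of $g^n$ defined near arcs of $\partial\DD$ avoiding the singular set extend to univalent maps of a fixed Koebe neighbourhood, by reflection across $\partial\DD$. Koebe distortion, together with the uniform expansion $\inf|g'|>1$, then controls $|(g^n)'|$ up to the constant $M$.

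\emph{Step 2: quasi-multiplicativity.} Since $G_{n+m}=G_n\cap (g^*)^{-n}(G_m)$, summing the Step~1 estimate over the components of $G_n$ gives
\[M^{-1}\lambda(G_n)\lambda(G_m)\lesssim \lambda(G_{n+m})\lesssim M\lambda(G_n)\lambda(G_m).\]
Here I also use that the images $(g^*)^n(I)$ have length bounded below; I would arrange this by working with a Markov refinement of $G$ built from preimages of an endpoint of $G$. Given both inequalities, Fekete's lemma applied to $\log\lambda(G_n)$ and to $-\log\lambda(G_n)$ yields $C=\lim\lambda(G_n)^{1/n}$ with $\lambda(G_n)\sim C^n$.

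\emph{Step 3: $C>0$.} Since $p$ is a regular fixed point, $g$ is analytic near $p$ and $|g'(p)|>1$. The component of $G_n$ containing $p$ therefore contains a neighbourhood of $p$ of length roughly $|g'(p)|^{-n}$, which gives $C\geq|g'(p)|^{-1}>0$.

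\emph{Step 4: $C<1$.} The set $\bigcap_n G_n$ of points never leaving $G$ is forward invariant. By ergodicity it has measure $0$ or $1$, and it cannot have full measure because $\lambda(\partial\DD\smallsetminus\overline G)>0$ and almost every orbit visits that arc by (a). Hence $\lambda(G_N)<1/(2M)$ for some $N$, and the upper inequality of Step~2 gives $\lambda(G_{kN})\le 2^{-k}$. So the decay is geometric, and $C<1$.

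I expect Step 1 to be the main obstacle, together with the lower bound in Step 2. When $G$ contains singularities, the components of $G_n$ may be infinitely many and arbitrarily small, and their images need not have length bounded below. If the Markov refinement fails to give a uniform lower bound on image lengths, I would instead try the Aleksandrov–Clark / Poisson identity $P_z\circ (g^*)^{-1}=P_{g(z)}$. Combined with Harnack's inequality, this should yield the conditional-measure comparison in Step 1 directly from invariance of harmonic measure, with no geometric control of the branches needed.
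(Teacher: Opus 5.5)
Your proofs of (a) and (b) are the paper's: Birkhoff applied to $\mathbbm{1}_E$, and Kac's lemma. One caveat, which you inherited from the way Kac's lemma is stated in the appendix. For an ergodic invariant probability, Kac gives $\sum_n n\,\lambda(E_n)=\int_E\tau_E\,d\lambda=1$, not $\lambda(E)$. For instance, for $z\mapsto z^2$ and $E$ a half-circle one has $\lambda(E_n)=2^{-n-1}$, so the sum is $1$. Your argument for (b) therefore proves the statement only with right-hand side $1$.

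For (c), your route differs from the paper's, and the extra generality is exactly where it breaks. The paper takes $G$ to be a small arc about $p$ on which $g$ is holomorphic (implicitly also injective and linearizable). Then $g|_G$ is an expanding injection fixing $p$, so $G\cap g^{-1}(G)=(g|_G)^{-1}(G)$ is a single arc about $p$. Inductively $G_n=(g|_G)^{-(n-1)}(G)$, and linearizing coordinates give $\lambda(G_n)\asymp|g'(p)|^{-n}$ from both sides, i.e.\ $C=1/|g'(p)|$. This is your Step~3, plus the observation that the component of $G_n$ containing $p$ is all of $G_n$. In that setting Steps 1, 2 and 4 are unnecessary.

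For an arbitrary arc, which is what Steps 1, 2 and 4 are designed for, there are genuine gaps.

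\emph{Step 1.} The Koebe justification fails for general centered inner functions. A uniform univalent extension of inverse branches requires the singular values to lie in a compact subset of $\mathbb{D}$; that is the one-component case, cf.\ Lemma~\ref{lemma-attr-inverse-branch}. In general, critical values accumulate on $\partial\mathbb{D}$, e.g.\ for centered Blaschke products whose zeros tend to a boundary point with hyperbolic separation tending to infinity. If $\overline{G}\cap\Sigma=\emptyset$ you can recover bounded distortion by the real-variable argument: $\log|g'|$ is Lipschitz on $\overline G$, $g^k(I)\subset G$ for $k<n$, and $\inf|g'|>1$. If $G$ meets $\Sigma$, however, you need Adler's condition, which Theorem~\ref{thm-ivriiurbanski-onecomp}(d) supplies only for one-component functions.

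\emph{Step 2.} This is the more serious problem. As you note, both inequalities need $\lambda\bigl((g^*)^n(I)\bigr)$ bounded below. But components of $g^{-1}(J)$ that straddle an endpoint of $G$ are truncated, and for a generic arc the endpoints have infinite orbits. So no finite Markov refinement exists, and images of components of $G_n$ become arbitrarily short. It is the supermultiplicative half that gives $\lambda(G_n)\lesssim C^n$; submultiplicativity alone yields only $\lambda(G_n)\ge C^n/M$ and $\lambda(G_n)=C^{n+o(n)}$.

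\emph{The Poisson fallback.} It does not repair this. Bounding $\mathbbm{1}_I\lesssim|I|P_{z_I}$ and using $(g^*)_*(P_z\,d\lambda)=P_{g(z)}\,d\lambda$ gives only upper bounds. Replacing $P_{g^n(z_I)}$ by a constant via Harnack needs $g^n(z_I)$ to stay in a fixed compact subset of $\mathbb{D}$, which is again a Koebe-type statement; Schwarz's lemma only gives $1-|g^n(z_I)|\ge 1-|z_I|$.

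Note that the paper's ``without loss of generality'' does not reach large arcs either, since shrinking $G$ only transfers the lower bound. As written, your proposal establishes (c) only in the small-arc case, where Step~3 alone suffices.
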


\begin{proof}
	Applying Birkhoff Ergodic Theorem \ref{thm-birkhoff} with $h=\mathbbm{ 1 }_E$, we deduce that \[ S_nE(x)= \# \left\lbrace 0\leq k\leq n-1: T^k(x)\in E \right\rbrace =\sum_{k=0}^{n} \mathbbm{ 1 }_E\circ T^k(x), \hspace{0.3cm}\lim\limits_n \frac{S_nE(x)}{n}= \lambda(E) ,\] for $ \lambda$-almost every $ x\in \partial U $, as stated in {\em \ref{thm1.1a}} . Kac's Lemma \ref{theorem-kac'slemma} gives in a direct way the estimate of the first return times given in {\em \ref{thm1.1b}} .
	
	To prove {\em \ref{thm1.1c}}, assume without loss of generality that  $ G\subset \partial\mathbb{D} $ is an open arc, $ p\in G $,  such that $ g|_G $ is holomorphic. Then, the Lebesgue measure of the arcs	\[G_n\coloneqq \left\lbrace \xi\in G\colon g(\xi), \dots, g^{n-1}(\xi)\in G\right\rbrace, \]	decays exponentially (this can be easily seen by using linearizing coordinates around the fixed point), as desired.	Note that, since $ g $ is holomorphic in $ G $, the image $ g(\xi) $ is well-defined for $ \xi\in G $, and coincides with $ g^*(\xi) $. This ends the proof of Theorem \ref{thm-centered-inner}. 
\end{proof}

\subsection{Ergodic properties of one component centered inner functions}
As usual when dealing with backward orbits, we consider {\em Rohklin's natural extension}. The set \[\widetilde{\partial \mathbb{D}}\coloneqq  \left\lbrace \left\lbrace \xi_n\right\rbrace _n\subset \partial \mathbb{D}\colon g^*(\xi_{n+1})=\xi_n, n\geq0\right\rbrace \] is the space of backward orbits, and $\widetilde{g^*}\colon\widetilde{\partial \mathbb{D}}\to \widetilde{\partial \mathbb{D}} $ is defined by $$\widetilde{g^*}(\left\lbrace \xi_n\right\rbrace _n)=g^*(\xi_0)\xi_0\xi_1...$$ Let $\pi_j(\left\lbrace \xi_n\right\rbrace _n)=\xi_j$; then, one can define a measure $\widetilde{\lambda}$ as the one satisfying $\widetilde{\lambda}(\pi_j^{-1}(A))=\lambda(A)$, for every $A\subset\partial U$ measurable. Since $\lambda$ is $g^*$-invariant and ergodic, the measure  $\widetilde{\lambda}$ is $\widetilde{g^*}$-invariant and ergodic (see e.g. \cite[Sect. 8.5]{URM1}).

Note that, for the radial extension of a one component inner function, all backward orbits are conformal, in the following sense.
\begin{lemma}{\bf (Inverse branches are well-defined)}\label{lemma-attr-inverse-branch}
	Let $g$ be a one component centered  inner function. Let $\left\lbrace \xi_n\right\rbrace _n\subset \partial \mathbb{D}$ be a bakward orbit. Then, the inverse branches $ G_n $ of $g^n$, $G_n(\xi_0)=\xi_n$ are well-defined in $D(\xi_0,r)$, for some $ r>0 $.
\end{lemma}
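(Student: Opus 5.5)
The plan is to use characterization (c) of Theorem \ref{thm-ivriiurbanski-onecomp} and then proceed by induction on $n$, in the spirit of Lemma \ref{lemma-baker-inverse-branch}. Since $g$ is one component, there is $\rho\in(0,1)$ such that $g\colon\widehat{\mathbb{C}}\smallsetminus\Sigma\to\widehat{\mathbb{C}}$ is a covering map over the annulus $\widetilde A=A(0;\rho,1/\rho)$. The unit circle lies inside $\widetilde A$. Fix $r>0$ small, say $r<\operatorname{dist}(\partial\mathbb{D},\partial\widetilde A)=\min(1-\rho,1/\rho-1)$. Then for every $\xi\in\partial\mathbb{D}$ the disk $D(\xi,r)$ is a simply connected subset of $\widetilde A$. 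Because a covering map is trivial over simply connected sets, every preimage $\eta\in g^{-1}(\xi)\cap\partial\mathbb{D}$ (and $\eta\notin\Sigma$, since $g^*(\eta)=\xi$ is attained at a point where the covering structure applies) determines a unique univalent branch $G_\eta$ of $g^{-1}$ on $D(\xi,r)$ with $G_\eta(\xi)=\eta$.

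A point needs care here. The backward orbit lives in $\partial\mathbb{D}$ with $g^*(\xi_{n+1})=\xi_n$, so I must check that $\xi_{n+1}\notin\Sigma$ and that $g^*(\xi_{n+1})=g(\xi_{n+1})$. By (d) of Theorem \ref{thm-ivriiurbanski-onecomp}, $\Sigma$ has measure zero, so for $\widetilde\lambda$-almost every backward orbit no $\xi_n$ lies in $\Sigma$. If the statement is meant for all backward orbits, I would instead restrict the definition of backward orbit to points of $\partial\mathbb{D}\smallsetminus\Sigma$, where $g$ extends analytically and $g^*=g$.

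Next I would run the induction on $n$. The key ingredient is contraction. A centered inner function is uniformly expanding on the circle, $\inf_{\partial\mathbb{D}}|g'|>1$. I need more than that, namely $|g'|\geq 1$ on a neighbourhood $\{z: \operatorname{dist}(z,\partial\mathbb{D})<r\}$ minus $\Sigma$, so that each branch $G_\eta$ maps $D(\xi,r)$ into $D(\eta,r)$. This is the main obstacle. To handle it, I would use the bounded distortion $|g''/(g')^2|\le C$ from (d), which gives, via Koebe-type control of the univalent branch $G_\eta$ on $D(\xi,r)$, that $|G_\eta'|\le 1/\inf|g'|\cdot(1+O(r))<1$ for $r$ small. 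I would then shrink $r$ so that this holds uniformly over all $\xi$ and $\eta$. Consequently $G_\eta(D(\xi,r))\subset D(\eta,r)$.

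With that in place, define $G_1=G_{\xi_1}$ on $D(\xi_0,r)$. Given $G_n$ with $G_n(D(\xi_0,r))\subset D(\xi_n,r)$, set $G_{n+1}=G_{\xi_{n+1}}\circ G_n$, where $G_{\xi_{n+1}}$ is the branch defined on $D(\xi_n,r)$. This map is well-defined and univalent, it satisfies $G_{n+1}(\xi_0)=\xi_{n+1}$, and by the contraction it again lands in $D(\xi_{n+1},r)$. This completes the induction.
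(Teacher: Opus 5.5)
Your proposal is correct and follows the paper's route. Univalent branches of $g^{-1}$ exist on disks of a fixed radius about every point of $\partial\mathbb{D}$, because no singular values lie near the circle; each branch contracts, so $D(\xi_n,r)$ is mapped into $D(\xi_{n+1},r)$; and one inducts. Your Koebe step is what actually justifies the contraction, which the paper only asserts via ``$|g'|>1$ on $D(\xi,r)$''. It must be applied on the larger disk $D(\xi,R_0)\subset\widetilde A$ where the branch is univalent, together with $|G_\eta'(\xi)|=1/|g'(\eta)|\le 1/\inf_{\partial\mathbb{D}}|g'|$. What is needed, and what you prove, is $|G_\eta'|<1$ on $D(\xi,r)$. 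It is not $|g'|\ge 1$ on a full neighbourhood of the circle, which can fail near $\Sigma$.

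Your worry about $\Sigma$ can be dropped. Suppose $\zeta\in\Sigma$ had $g^*(\zeta)=w\in\partial\mathbb{D}$, and take $D(w,\delta)\subset\widetilde A$. The tail of the radius at $\zeta$ is connected, so it would lie in a single sheet $V$ of $g^{-1}(D(w,\delta))$. This forces $\zeta=(g|_V)^{-1}(w)\in V\subset\widehat{\mathbb{C}}\smallsetminus\Sigma$, a contradiction. Hence every backward orbit automatically avoids $\Sigma$ and satisfies $g(\xi_{n+1})=\xi_n$, so the lemma holds for all backward orbits, not just almost every one.
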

\begin{proof}
	The lemma follows easily from an inductive construction of the required inverse branches. Take $ r>0 $ such that there are no singular values in $ \mathbb{D}\smallsetminus D(0,r) $ (the existence of such $ r>0 $ follows from the assumption of $ g $ being one component).  Then, the existence of the first inverse branch $G_1$ in $D(\xi_0, r)$ is given. Note that $ G_1(D(\xi_0, r))\subset  D(\xi_1, r)$, since $ \left| g'(\zeta)\right| >1 $, $ \zeta\in D(\xi,r) $, for all $ \xi\in\partial \mathbb{D} $. Since all inverse branches if $g$ are well-defined in $D(\xi_1, r)$, one deduces that $G_2$ is well-defined in $D(\xi_0, r)$. The construction of $G_n$ follows by induction.
\end{proof}

It follows from \cite[Sect. 2.4]{Bargmann} that, for centered inner functions, periodic points are dense on the unit circle.
Moreover, the following proposition gives a more precise control on the dynamics, from a more quantitative point of view.
\begin{prop}{\bf (Backward orbits and periodic points)} 	Let $g$ be a one component inner centered  function.
	\begin{enumerate}[label={\em (\alph*)}]
		\item {\em (Contraction of inverse branches)}
		There exists $ K>1 $ and $ r>0 $, such that	for $\widetilde{\lambda}$-almost every backward orbit $ \left\lbrace \xi_n\right\rbrace _n $, $r_0\in (0,r)$, there exists $n_0\in\mathbb{N}$ such that, for all $n\geq n_0$,
		\[G_n(D(\xi_0, r_0))\subset D(\xi_n,  K^{-{n}} r_0),\] where $G_n$ is the branch of $g^{-n}$ sending $\xi_0$ to $\xi_n$. 
		
		\item {\em (Bound on the period of periodic boundary points)} Assume $ g $ has  degree $d\in\mathbb{N}$. Let $\xi\in\partial \mathbb{D}$ and $ r>0 $. Then, there exist a periodic point $ p\in D(\xi,r) \cap \partial \mathbb{D}$ of period at most \[\frac{-\ln r}{\ln d}  + N+1 ,\] where  $ N $ depends only on $g$.
	\end{enumerate}
\end{prop}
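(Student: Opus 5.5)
For part (a) I would argue that the uniform contraction holds for \emph{every} backward orbit, so the almost-everywhere statement is automatic. By \cite[Thm.~4.15]{Mashreghi}, $\kappa\coloneqq\inf_{\partial\mathbb{D}}|g'|>1$. Since $g$ is one component, Theorem~\ref{thm-ivriiurbanski-onecomp}(d) gives $|g''/(g')^2|\le C$ on $\partial\mathbb{D}\smallsetminus\Sigma$, and by (c) the same bound extends to a neighbourhood of the circle off $\Sigma$. Integrating $|(1/g')'|\le C$ along a segment of length $\delta$ from a boundary point $\zeta$ gives $|1/g'(z)-1/g'(\zeta)|\le C\delta$. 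Choosing $\delta$ with $C\delta\le\frac12(1-1/K)$ for some fixed $K\in(1,\kappa)$ yields $|g'|\ge K$ on a uniform $\delta$-neighbourhood of $\partial\mathbb{D}$ intersected with the domain of the inverse branches. Now take $r$ smaller than $\delta$ and than the radius of Lemma~\ref{lemma-attr-inverse-branch}. Then each branch $G_1$ on $D(\xi_0,r)$ satisfies $|G_1'|\le 1/K$, so $G_1(D(\xi_0,r_0))\subset D(\xi_1,r_0/K)$. Iterating through the inductive construction of Lemma~\ref{lemma-attr-inverse-branch}, the radii stay below $r$ and we obtain $G_n(D(\xi_0,r_0))\subset D(\xi_n,K^{-n}r_0)$ for all $n$, with $n_0=0$.

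For part (b), with $g$ a finite Blaschke product of degree $d$, I would use the Markov structure. Fix the fixed point $q\in\partial\mathbb{D}$ of $g$, which exists since $g$ is an expanding degree-$d$ circle map. The $d^n$ points of $g^{-n}(q)$ cut $\partial\mathbb{D}$ into \emph{fundamental arcs} of level $n$, each mapped by $g^n$ bijectively onto $\partial\mathbb{D}\smallsetminus\{q\}$. Since $g$ is analytic and expanding on the circle, these branches have distortion uniformly bounded by some $M$ independent of $n$ (Koebe, via part (a)). The periodic point is then produced exactly as in the proof of Proposition~\ref{prop-periodic-Boole}. If some fundamental arc $J$ of level $n$ lies inside $D(\xi,r/2)\cap\partial\mathbb{D}$, the inverse branch of $g^n$ onto $J$ maps the arc $D(\xi,r)\cap\partial\mathbb{D}$ into itself strictly contractively, so it has a fixed point, which is a periodic point of period $n$. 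It therefore suffices to show that a level-$n$ fundamental arc fits in $D(\xi,r/2)$ once $n\ge \frac{-\ln r}{\ln d}+N+1$.

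This is the main obstacle. Knowing only $|(g^n)'|\ge K^n$ bounds the length of fundamental arcs by $\lesssim K^{-n}$, which produces $\ln K$ rather than $\ln d$ in the period bound. To reach $\ln d$ I would use the invariance of $\lambda$ instead of pointwise expansion. Invariance gives $\lambda(g^{-n}A)=\lambda(A)$, equivalently $\sum_{g^n(w)=z}1/|(g^n)'(w)|=1$, and together with bounded distortion this says each level-$n$ fundamental arc $J$ satisfies $\lambda(J\cap g^{-n}A)\asymp\lambda(J)\lambda(A)$. I would first iterate forward: the arc $I=D(\xi,r/2)\cap\partial\mathbb{D}$ contains a fundamental arc of level $m$ for some $m$. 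The plan is to control $m$ by counting: $I$ meets at most two level-$m$ arcs without containing one, and the $d^m$ arcs of level $m$ have total length $1$. So I would try to bound the \emph{maximal} level-$m$ arc length by $C d^{-m}$ up to a constant $N$ that depends only on $g$.

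My first attempt at that maximal-length bound is to compare the measure of maximal entropy, which gives each level-$m$ arc mass exactly $d^{-m}$, with $\lambda$ on cylinders, via the distortion estimate. If this comparison fails, with lengths only comparable at rate $\kappa^{-m}$, I would instead exploit that $g$ is centered with finite degree. In that case $|g'|$ ranges over a compact set and the product $\prod_{k<m}|g'(g^k w)|$ is comparable to $d^m$ on a set of full $\lambda$-measure by Birkhoff (Theorem~\ref{thm-birkhoff}). I would then choose $J$ among the arcs meeting $I$ along such typical points, absorbing the remaining error into $N$.
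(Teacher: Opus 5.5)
Your part (a) is correct and takes a more direct route than the paper. The paper applies Birkhoff's theorem on the natural extension to count visits of a typical backward orbit to an arc $E$ where $|g'|\ge K$. You instead use $\kappa\coloneqq\inf_{\partial\mathbb{D}}|g'|>1$ together with a distortion bound. This gives the contraction for \emph{every} backward orbit, with $n_0=0$.

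Two small repairs are needed.
\begin{enumerate}
\item The condition on $\delta$ should be $C\delta\le 1/K-1/\kappa$. Your condition $\frac12(1-1/K)$ is not enough when $K$ is close to $\kappa$.
\item Instead of integrating $(1/g')'$ along segments, which near $\Sigma$ need not stay in the domain of $g$, apply Koebe distortion to the univalent branch $G_1$ on $D(\xi_0,r_*)$ from Lemma~\ref{lemma-attr-inverse-branch}. For $r\le\varepsilon r_*$ this gives $|G_1'|\le\frac{1+\varepsilon}{(1-\varepsilon)^3}\kappa^{-1}\le K^{-1}$ on $D(\xi_0,r)$ once $\varepsilon$ is small.
\end{enumerate}

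Part (b) has a genuine gap, exactly at the step you flagged, and it cannot be closed: with $\ln d$ the bound is false. Take $g(z)=z\,\frac{z+b}{1+bz}$ with $0<b<1$. It is centered, of degree $2$ and one component, with $g(1)=g(-1)=1$ and $g'(1)=\kappa_1\coloneqq\frac{2}{1+b}<2$.

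In linearizing coordinates at the repelling fixed point $1$, a periodic point $p\neq 1$ at distance $s$ from $1$ moves monotonically away from $1$. It needs at least $\frac{\ln(1/s)}{\ln\kappa_1}-C$ iterates to leave a fixed neighbourhood of $1$, so its period is at least that. Now $-1$ is not periodic and $g^{-1}(1)=\{1,-1\}$. Hence every periodic $p\in D(-1,r)$ has $g(p)\neq 1$ at distance $\lesssim r$ from $1$. Its period is therefore at least $\frac{-\ln r}{\ln\kappa_1}-C'$, which exceeds $\frac{-\ln r}{\ln 2}+N+1$ for every fixed $N$ once $r$ is small.

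In your language, the level-$m$ fundamental arc adjacent to $1$ has length $\asymp\kappa_1^{-m}\gg 2^{-m}$. So no bound $Cd^{-m}$ on the maximal arc length holds, and the comparison with the measure of maximal entropy must fail. The Birkhoff fallback fails too, for two reasons:
\begin{enumerate}
\item It only gives $|(g^m)'(w)|=e^{m(\chi_\lambda+o(1))}$ with $\chi_\lambda=\int\ln|g'|\,d\lambda$, the entropy of $\lambda$, which is in general strictly less than $\ln d$.
\item It holds only for $\lambda$-a.e.\ $w$, with no uniformity in $\xi$ and $r$.
\end{enumerate}

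What your argument does prove is the corrected statement, with $\ln d$ replaced by $\ln\kappa$. Since $g^n$ maps each level-$n$ arc $J$ onto $\partial\mathbb{D}\smallsetminus\{q\}$ with $|(g^n)'|\ge\kappa^n$, we get $\lambda(J)\le\kappa^{-n}$. So $D(\xi,r/2)$ contains such an arc once $n\ge\frac{-\ln r}{\ln\kappa}+N$, and your contraction argument then produces the periodic point.

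The paper's own proof of (b) has the same hole. It asserts $\lambda(g^n(D(\xi,r)))=1$ for $n=-\ln r/\ln d$ by appealing to the conjugacy with $z\mapsto z^d$. That conjugacy is only a homeomorphism and does not send arcs of length $r$ to arcs of length comparable to $r$.
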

\begin{proof} 
	To prove (a), note that the fact that the branches $G_n$ are well-defined folows from the pevious lemma, and it is only left to prove the contraction estimates, which follows from Birkhoff Ergodic Theorem. 
	Indeed, let $E\subset\partial\mathbb{D}$  be an arc in which $ \left| g'\right| \geq K>1 $.
	Since $\widetilde{\lambda}$ is $\widetilde{g^*}$-invariant and ergodic, one can apply Birkhoff Ergodic Theorem \ref{thm-birkhoff}, as in  Theorem \ref{thm-centered-inner}{\em \ref{thm1.1a}}, to the indicator function  of  $E$, to get that for  $\widetilde{\lambda}$-almost every backward orbit $ \left\lbrace \xi_n\right\rbrace _n $
	\[\lim\limits_n \frac{\# \left\lbrace 0\leq k\leq n-1: \xi_k\in E \right\rbrace}{n}= \lambda(E) .\] Thus, $ \# \left\lbrace 0\leq k\leq n-1: x_k\in E \right\rbrace  \sim n  $. Since each time we hit the target $E$, the contraction factor is $K$, we get the desired result.
	
	To prove (b), note that $g|_{\partial\mathbb{D}}$ is conjugate to $ z\mapsto z^d $ on $\partial \mathbb{D}$, and $ \left| g'\right| \geq K>1 $ on $ \partial\mathbb{D} $. Let $N$ be such that $K^N>4$.
	For $$n=\frac{-\ln r}{\ln d} ,$$ we have \[\lambda(g^n(D(\xi,r)))=1.\] In particular, there exists $y\in \partial \mathbb{D}\cap D(\xi,r/2)$ such that   $ g^{n+N+1}(y)\in D(y, r/4)$ (this follows from the conjugacy on $\partial \mathbb{D}$ to $ z\mapsto z^d $ and the fact that $ \lambda(f^n(D(\xi,r)))=1 $, so all its images have also full measure, meaning that there are preimages of all points in $ D(\xi,r) $). 
	
	Then, let $G_{n+N+1}$ be the inverse branch of $g^{n+N+1}$ sending $g^{n+N+1}(y)$ to $y$, which is well-defined in $ D(\xi,r) $. Moreover,  since $g^{n+N+1}(y)\in D(y, r/4)$ and $K^N>4$, it follows that $$G_{n+N+1}(D(\xi, r))\subset (D(G_{n+N+1}(\xi), r/4))\subset D(G_{n+N+1}(\xi), r/2)\subset D(\xi, r).$$ This already implies that there is a periodic point of period in $ D(\xi, r)$, as desired.
\end{proof}

\bibliographystyle{amsalpha}
\bibliography{ref}

\end{document}